\makeatletter\usepackage{microtype}\g@addto@macro\@verbatim{\microtypesetup{activate=false}}\makeatother%
\theoremstyle{plain}
\newtheorem{theorem}{Theorem}[section]
\newtheorem{lemma}[theorem]{Lemma}
\newtheorem{proposition}[theorem]{Proposition}
\newtheorem{conjecture}[theorem]{Conjecture} 
\newtheorem{corollary}[theorem]{Corollary} 
\newtheorem{claim}[theorem]{Claim}
\theoremstyle{definition}
\newtheorem{definition}[theorem]{Definition} 
\newtheorem{example}[theorem]{Example}
\theoremstyle{remark}
\newtheorem{remark}[theorem]{Remark}
\theoremstyle{plain}
\newtheorem*{thm*}{Th\'{e}or\`{e}me}
\newtheorem{lem}[theorem]{Lemma}
\newtheorem{pro}[theorem]{Proposition}
\newtheorem{cor}[theorem]{Corollary}
\newtheorem{ques}[theorem]{Question}
\theoremstyle{definition}
\numberwithin{equation}{section}
\newcommand\OO{{\mathcal{O}}}
\newcommand\CC{{\mathbb{C}}}
\newcommand\PP{{\mathbb{P}}}
\newcommand\QQ{{\mathbb{Q}}}
\newcommand\RR{{\mathbb{R}}}
\newcommand\ZZ{{\mathbb{Z}}}
\newcommand\Amp{{\rm Amp}} 
 \newcommand\Aut{{\rm Aut}}
\newcommand\Eff{{\rm Eff}} 
\newcommand\NE{\overline{{\rm NE}}}  
\newcommand\Nef{{\rm Nef}}
\newcommand\Nefe{{\rm Nef}^e} \newcommand\Nefp{{\rm Nef}^{+}} 
\newcommand\Mov{\overline{{\rm Mov}}}
\newcommand\Pic{\text{\rm Pic}}
\newcommand\GL{{\rm GL}} 
\newcommand\Ker{{\rm Ker}}
\newcommand\id{{\rm id}}
\newcommand{\ssec}{\subsection}
\newcommand{\sssec}{\subsubsection}
\newcommand{\ol}{\overline}
\newcommand{\ti}[1]{\widetilde{#1}}
\newcommand{\vast}{\bBigg@{4}}
\newcommand{\Vast}{\bBigg@{5}}
\newcommand{\cO}{\mathcal{O}}
\newcommand{\sX}{\mathscr{X}}
\newcommand{\sY}{\mathscr{Y}}
\newcommand{\cExt}{\mathcal{E}xt}
\newcommand{\cHom}{\mathcal{H}om}
\newcommand{\gD}{\Delta}
\newcommand{\gS}{\Sigma}
\newcommand{\gb}{\beta}
\newcommand{\gep}{\varepsilon}
\newcommand{\go}{\omega}
\newcommand{\gs}{\sigma}
\newcommand{\gt}{\theta}
\newcommand{\Hom}{\mathrm{Hom}}
\newcommand{\op}
\newcommand{\supp}{\mathrm{supp}}
\newcommand{\cnec}{\mathrel{:=}}
\newcommand{\cto}{\circlearrowleft}
\newcommand*\eto{%
	\xrightarrow[]{\raisebox{-0.25 em}{\smash{\ensuremath{\sim}}}}%
}
\newcommand{\hto}{\hookrightarrow}
\newcommand{\xto}[1]{\xrightarrow{ #1 }}
\begin{document}
	
	\title[Nef cone in Schoen's construction]{Nef cones of fiber products and an application to the Cone Conjecture}

	\author{C\'ecile Gachet}
	\address{Universit\'e C\^ote d'Azur, CNRS, LJAD, France; Institut f\"ur Mathematik, Humboldt-Universit\"at zu Berlin, Unter den Linden 6, 10099 Berlin, Germany}
    \email{cecile.gachet@hu-berlin.de}

	\author{Hsueh-Yung Lin}
	\address{Department of Mathematics, National Taiwan University, 
		and National Center for Theoretical Sciences,
		Taipei, Taiwan.}
	\email{hsuehyunglin@ntu.edu.tw}
	
	\author{Long Wang}
	\address{Shanghai Center for Mathematical Sciences, Fudan University, Jiangwan Campus, Shanghai, 200438, China; Graduate School of Mathematical Sciences, The University of Tokyo, 3-8-1 Komaba, Meguro-Ku, Tokyo 153-8914, Japan}
	\email{wanglll@fudan.edu.cn}

	\dedicatory{Dedicated to Professor Keiji Oguiso, on the occasion of his sixtieth birthday}

	\begin{abstract}
		
		We prove a decomposition theorem
		for the nef cone of smooth fiber products over curves, subject to the
		necessary condition that
		their N\'eron--Severi space decomposes.
		We apply it to describe the nef cone of
		so-called Schoen varieties, 
		which are the higher dimensional analogues 
		of the Calabi--Yau threefolds constructed by Schoen. 
		Schoen varieties give rise to Calabi--Yau pairs, 
		and in each dimension at least three, 
		there exist Schoen varieties
		with non-polyhedral nef cone.
		We prove the Kawamata--Morrison--Totaro
		Cone Conjecture for the nef cones of Schoen varieties, 
		which generalizes the work by 
		Grassi and Morrison.
	\end{abstract}
	
	\subjclass[2020]{14J32, 14E30, 14J27}
	
	\keywords{Calabi--Yau varieties, Nef cones, Cone Conjecture}

	\maketitle

	\thispagestyle{empty}

	\section{Introduction}
	
	\ssec{Cone Conjecture}
	To understand the geometry of a smooth projective variety $X$, studying the Mori cone of curves $\NE(X)$ and its dual, the nef cone $\Nef(X)$, is central, especially from the viewpoint of the minimal model program (MMP).
	
	An important part of the relationship between the Mori cone and the MMP is 
	captured by 
	the Cone Theorem, and the Contraction Theorem.
	These theorems assert that the $K_X$-negative part
	of the Mori cone of a smooth projective variety $X$ is rational polyhedral away from the $K_X$-trivial hyperplane, 
	and the extremal rays of the $K_X$-negative part
	correspond to some morphisms from $X$, involved in the MMP.
	In particular,	when $X$ is a Fano variety (namely, $-K_X$ is ample), 
	the cone $\Nef(X)$ is a rational polyhedral cone, 
	and its extremal rays are generated by semiample classes. 
	In general, however, it is difficult to describe the whole Mori cone, 
	or dually the whole nef cone, even under the slightly weaker 
	assumption that $-K_X$ is semiample. 
	For instance, if $X$ is the blowup of $\PP^2$ at the base points of 
	a general pencil of cubic curves in $\PP^2$, then $-K_X$ is semiample but $\Nef(X)$ is not rational polyhedral.

	When $X$ is $K$-trivial,
	we expect nevertheless that some essential parts of the nef cone of $X$ are rational polyhedral, up to the action of $\Aut(X)$. A precise statement, known as the Cone Conjecture,	was first formulated by Morrison \cite{Mo93} and Kawamata \cite{Ka97}. 
	It was later generalized by Totaro~\cite{To10}
	to klt Calabi--Yau pairs $(X,\gD)$
	(see Section~\ref{ssec-kltCY}), 
	thus including many more examples,
	already in dimension $2$. When stated by these authors, the Cone Conjecture comprises predictions both on the nef cone and on the movable cone of varieties, and has both an absolute and a relative version. In what follows, we will only consider the absolute Cone Conjecture for nef cones of certain Calabi--Yau pairs.
	
	Let us recall the statement
	formulated by Totaro in~\cite[Conjecture 2.1]{To10}, starting with some notations.
	For a pair $(X, \gD)$, we define 
	$$\Aut(X,\gD) \cnec \Set{ f \in \Aut(X) | f(\supp(\gD)) = \supp(\gD) }.$$
	We also define the \textit{nef effective cone} 
	$\Nefe(X)$ as
	\[ \Nefe(X) := \Nef(X)\cap \Eff(X), \]
	where $\Eff(X)$ is the effective cone of $X$.

	\begin{conjecture}[Kawamata--Morrison--Totaro Cone Conjecture]\label{cone} Let $(X, \Delta)$ be a klt Calabi--Yau pair. 
		There exists a rational polyhedral cone $\Pi$ in $\Nefe(X)$ which is a fundamental domain for the action of $\Aut(X, \Delta)$ on $\Nefe(X)$, in the sense that
		\[ \Nefe(X) =\bigcup_{g\in\Aut(X, \Delta)}g^{\ast}\Pi, \]
		and $\Pi^{\circ} \cap (g^{\ast}\Pi)^{\circ} = \varnothing$ unless $g^{\ast} = \mathrm{id}$.
	\end{conjecture}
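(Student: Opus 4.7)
The plan is to follow Looijenga's polyhedrality criterion, which underpins every known proof of the Cone Conjecture. First I would study the homomorphism $\rho\colon\Aut(X,\Delta)\to\GL(N^1(X)_{\ZZ})$ and set $\Gamma\cnec\rho(\Aut(X,\Delta))$. This image preserves the integral lattice, the numerical intersection data, and, by pull-back of divisors, the cone $\Nefe(X)$. The initial geometric step is to establish that $\Gamma$ is \emph{arithmetic}: that $\Gamma$ has finite index in the full integral symmetry group of $\Nefe(X)$, equivalently, in the stabilizer of $\Nefe(X)$ inside the orthogonal group of a natural lattice-valued invariant preserved by $\Aut(X,\Delta)$.

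Next I would produce a rational polyhedral cone $\Pi_{0}\subseteq\Nefe(X)$ whose $\Gamma$-translates cover $\Nefe(X)$. Because $(X,\Delta)$ is klt Calabi--Yau, the natural combinatorial structure on $\Nefe(X)$ is its decomposition into \emph{Mori chambers} coming from small $\QQ$-factorial modifications $X\dashrightarrow X'$, each of which again carries a klt Calabi--Yau pair. The claim to verify is that these chambers, which fill $\Movcp(X)\cap\Nefe(X)$, form finitely many $\Aut(X,\Delta)$-orbits, and that $\Nefe(X)\setminus\Movcp(X)$ is likewise covered by finitely many orbits of polyhedral cones generated by divisorial-contraction data. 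This geometric finiteness is essentially the content of the conjecture.

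Granted arithmeticity of $\Gamma$ and the existence of a polyhedral orbit-cover, extracting a fundamental domain $\Pi$ satisfying $\Pi^{\circ}\cap(g^{\ast}\Pi)^{\circ}=\varnothing$ for $g^{\ast}\neq\id$ is formal: this is Looijenga's theorem on arithmetic quotients of rational convex cones, refined in the algebro-geometric setting by Totaro.

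The main obstacle is that Conjecture~\ref{cone} is, in this generality, a famous open problem. No unconditional method is known to establish either arithmeticity of $\Gamma$ or finiteness of Mori chambers modulo $\Aut(X,\Delta)$ outside of surfaces (Sterk, Namikawa, Totaro), hyperk\"ahler manifolds (Markman--Yoshioka, Amerik--Verbitsky), and a growing list of special higher-dimensional classes, including those treated in the present paper. My realistic approach would therefore be to exploit additional structure on $(X,\Delta)$---a fibration, a Torelli-type theorem, or an explicit lattice description of $N^1(X)$---so as to reduce the verification of arithmeticity and chamber-finiteness to concrete questions in arithmetic group theory. The decomposition theorem for nef cones of smooth fiber products announced in the abstract furnishes precisely such structure for Schoen varieties, and would be my starting point for the subclass of the conjecture actually settled in this article.
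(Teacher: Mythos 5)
You have correctly diagnosed the situation: the statement labeled Conjecture~\ref{cone} is precisely that --- a conjecture --- and the paper neither claims nor contains a proof of it in this generality, so there is no ``paper's own proof'' for your argument to be measured against. Your proposal is accordingly not a proof, and it honestly says so: the two essential inputs you isolate (a polyhedral $\Gamma$-orbit cover of $\Nefe(X)$, plus enough structure to extract a fundamental domain) are exactly where the open content lies. What the paper actually proves is Theorem~\ref{main}, the conjecture for Schoen pairs, and your closing remark identifies the right entry point for that special case: the fiber-product decomposition of the nef cone (Theorem~\ref{thm-nefdec}).

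One methodological correction for the special case, though. You frame the formal step as requiring \emph{arithmeticity} of $\Gamma=\rho(\Aut(X,\Delta))$ in the integral symmetry group of the cone, together with finiteness of Mori chambers modulo the group. The paper needs neither. The version of Looijenga's result it uses (Lemma~\ref{looij}, packaged as Proposition~\ref{pro-looij}) asks only for a rational polyhedral cone $\Pi\subset\Nefp(X)$ with $\Amp(X)\subset H\cdot\Pi$ and for a rational point in the interior of the dual cone with trivial stabilizer; the latter is supplied unconditionally by Lemma~\ref{lem-trivstab} (via Fujiki--Liebermann finiteness of stabilizers of ample classes), with no arithmeticity hypothesis on $\Gamma$. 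The orbit cover for Schoen pairs is then assembled factorwise: Totaro's theorem for rational elliptic surfaces (\cite[Theorem 8.2]{To08}) when $W_i$ is an elliptic surface, and polyhedrality of $\Nef(W_i)$ via the Koll\'ar/Beltrametti--Ionescu restriction results when $\dim W_i\ge 3$ or $W_i$ is weak del Pezzo (Proposition~\ref{prop-lef}); Corollary~\ref{cor-H1H2} then transfers this to the fiber product. No small $\QQ$-factorial modifications, Mori chamber decomposition, or lattice-theoretic Torelli input appears anywhere in the argument, so if you pursue this class of examples you should aim directly at producing the polyhedral cover on each factor rather than at arithmeticity.
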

	
	An important prediction of 
	the Cone Conjecture for the MMP is that
	the number of $\Aut(X, \Delta)$-equivalence classes of faces of the nef effective cone $\Nefe(X)$ corresponding to birational contractions or fiber space structures is finite (see e.g.~\cite[p.243]{To10}).

	Note that it is standard to replace Conjecture~\ref{cone} 
	by the {\it a priori} stronger following conjecture. Let $\Nefp(X)$ denote the convex hull of 
	\[ \Nef(X)\cap N^1(X)_{\QQ},\]
	where $N^1(X)_{\QQ}$ is the rational N\'eron--Severi space of $X$.

	\begin{conjecture}\label{conep}
		Let $(X, \Delta)$ be a klt Calabi--Yau pair. Then the following statements hold. 
		\begin{enumerate}
			\item  There exists a rational polyhedral cone in $\Nefp(X)$ which is a fundamental domain for the action of $\Aut(X, \Delta)$ on $\Nefp(X)$. 
			\item 
			We have
			$$\Nefp(X) = \Nefe(X).$$
		\end{enumerate}
	\end{conjecture}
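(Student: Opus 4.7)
The plan is to split Conjecture~\ref{conep} into its two assertions and address them with essentially independent tools. For part (2), the inclusion $\Nefe(X)\subseteq\Nefp(X)$ is essentially formal: $\Nefp(X)$ is closed and contains $\Nef(X)\cap N^1(X)_{\QQ}$, while every effective $\RR$-Cartier class is a limit of effective $\QQ$-Cartier classes, so any nef effective class already lies in $\Nefp(X)$. The substantive content is the reverse inclusion, i.e. that every rational class in $\Nef(X)$ is effective up to scaling. I would attack this via the base-point-free theorem for klt pairs: given a nef $\QQ$-Cartier divisor $D$, since $K_X+\Delta\equiv 0$, the divisor $aD-(K_X+\Delta)=aD$ is nef for every $a>0$; if $D$ is additionally big, then $mD$ is base-point-free for $m$ sufficiently divisible, hence effective. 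The delicate case is when $D$ lies on the boundary of the big cone, where an abundance-type input is needed to produce a fibration $f\colon X\to Y$ with $D\sim_{\QQ}f^{\ast}H$ for an ample class $H$ on $Y$.

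For part (1), the plan is to exploit the action of $\Aut(X,\Delta)$ on $N^1(X)_{\RR}$. First I would verify that the image $\Gamma$ of $\Aut(X,\Delta)$ in $\GL(N^1(X)_{\ZZ})$ is discrete and acts properly discontinuously on the interior of $\Nefp(X)$. Then, using the intersection pairing together with the convex geometry of $\Nefp(X)$, I would invoke a Looijenga-type criterion: if one can exhibit a rational polyhedral subcone $\Pi_0\subseteq\Nefp(X)$ such that $\Gamma\cdot\Pi_0=\Nefp(X)$, a standard Dirichlet-domain argument yields an actual rational polyhedral fundamental domain $\Pi$. Constructing $\Pi_0$ amounts to producing enough extremal contractions and Mori fibration structures on $X$ to cover $\partial\Nefp(X)$ modulo the action of $\Aut(X,\Delta)$, which reduces the problem to a finiteness statement in birational geometry.

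The main obstacle, and the reason Conjecture~\ref{conep} remains open in arbitrary dimension, is precisely that finiteness: one must show that the number of $\Aut(X,\Delta)$-orbits of extremal faces of $\Nefe(X)$ supporting an MMP-type contraction is finite. This is tightly intertwined with the finiteness of minimal models of $(X,\Delta)$ and with abundance for klt Calabi--Yau pairs, both of which are themselves open problems. Accordingly, no uniform attack is currently available; in practice, one restricts to classes of Calabi--Yau pairs where the lattice $N^1(X)$ and the group $\Aut(X,\Delta)$ are controlled by the specific geometry at hand (surfaces, hyperk\"ahler manifolds, or fibre products over curves as in the present paper), and proves Conjecture~\ref{conep} by leveraging that structure directly rather than aiming for a general argument.
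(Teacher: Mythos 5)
You are attempting to prove Conjecture~\ref{conep}, which the paper does not prove and does not claim to prove: it is an open conjecture in all dimensions $\geq 3$. What the paper actually establishes about it is (i) the equivalence of Conjecture~\ref{conep} with Conjecture~\ref{cone} (Corollary~\ref{cor-equivconj}), deduced from Looijenga's result in the form of Proposition~\ref{pro-looij}: a rational polyhedral fundamental domain $\Pi\subset\Nefe(X)$ for the $\Aut(X,\Delta)$-action automatically lies in $\Nefp(X)$, contains $\Amp(X)$ in its orbit, and hence sweeps out all of $\Nefp(X)$, giving both assertions at once; and (ii) the special case of Schoen pairs (Theorem~\ref{main}), where the decomposition $\Nef(X)=p_1^*\Nef(W_1)+p_2^*\Nef(W_2)$ of Theorem~\ref{thm-nefdec} plus control of $\Aut(W_i/\PP^1)$ supplies the cone $\Pi$ with $\Amp(X)\subset H\cdot\Pi$ needed for Proposition~\ref{pro-looij}. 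Your part (1) plan (a Looijenga/Dirichlet-type criterion, reduced to exhibiting $\Pi_0$ with $\Gamma\cdot\Pi_0=\Nefp(X)$) is in the same spirit, and you are right that producing $\Pi_0$, i.e.\ the finiteness of orbits of faces, is exactly the open core; so as an honest assessment of the state of the problem your last paragraph is accurate.

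There is, however, a concrete error in your treatment of part (2). You claim the inclusion $\Nefe(X)\subseteq\Nefp(X)$ is ``essentially formal'' because $\Nefp(X)$ is closed and effective $\RR$-classes are limits of effective $\QQ$-classes. Neither step works: $\Nefp(X)$, being the convex hull of the rational points of $\Nef(X)$, is in general \emph{not} closed (already for a round nef cone it misses the irrational boundary rays), and even if it were, the rational effective classes approximating a nef effective class need not be nef, so they do not lie in $\Nef(X)\cap N^1(X)_{\QQ}$. In fact a nef, effective, irrational boundary class lying on no rational supporting structure is precisely the kind of class the conjecture must rule out; this inclusion is part of the conjectural content, not a formality. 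The direction that is actually accessible in special cases is the one you call substantive, $\Nefp(X)\subseteq\Nefe(X)$ (nef rational classes are effective), which is how the paper argues for its factors $W_i$ (via base-point-freeness or Totaro's Lemma~4.2, cf.\ Proposition~\ref{prop-lef}); your base-point-free/abundance discussion of that direction is fine as far as it goes, with the boundary (non-big) case open exactly as you say.
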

	
	Thanks to the fundamental work of Looijenga~\cite{Lo14}, we prove that the two conjectures are equivalent (see Corollary~\ref{cor-equivconj}).
	
	\ssec{Nef cones of fiber products}

	The starting point of this work is a decomposition theorem for the nef cone of a fiber product over a curve.
	
	It begins with the following general question.
	Let $W_1$ and $W_2$ be projective varieties and let
	$\phi_1 : W_1 \to B$ and $\phi_2 : W_2 \to B$ be 
	surjective morphisms over a base $B$.
	Assume that
	the fiber product $W \cnec W_1 \times_B W_2$ is irreducible.
	
	\begin{ques}\label{que-decompNef}
		Denote by $p_i : W \to W_i$ the natural projections.
		When do we have
		\begin{equation}\label{decomp-Nef}
			p_1^* \Nef(W_1) + p_2^* \Nef(W_2) = \Nef(W)?
		\end{equation}
	\end{ques}

	As the nef cone of a projective variety linearly
	spans the whole space of numerical classes of $\RR$-divisors on the variety,
	the nef cone decomposition~\eqref{decomp-Nef} exists only if
	\begin{equation}\label{decomp-N1}
		p_1^*N^1(W_1)_{\RR} + p_2^*N^1(W_2)_{\RR} = N^1(W)_{\RR}.
	\end{equation}
	We may then ask which fiber products satisfying the decomposition~\eqref{decomp-N1} also have the decomposition~\eqref{decomp-Nef}.

	When $B$ is a point, it is not hard to see that~\eqref{decomp-N1} implies~\eqref{decomp-Nef}.
	In this case indeed, the decomposition~\eqref{decomp-N1} is a direct sum. Every divisor $D$ decomposes uniquely as $p_1^{\ast}D_1 + p_2^{\ast}D_2$, where $D_1=D_{|W_1\times\{\mathrm{pt}\}}$ and $D_2=D_{|\{\mathrm{pt}\}\times W_2}$. If $D$ is nef, then so are its restrictions, and hence~\eqref{decomp-Nef} follows.
	When $B$ is $\PP^1$ and the varieties $W_i$ are certain rational elliptic surfaces, the decomposition~\eqref{decomp-Nef} was proven in~\cite[Proposition 3.1]{GM93}.
	We show that the implication~\eqref{decomp-N1} $\Rightarrow$~\eqref{decomp-Nef}
	continues to hold for an arbitrary irreducible
	fiber product over a curve.
	
	\begin{theorem}\label{thm-nefdec} For $i=1,2$, let $\phi_i : W_i \to B$ be a surjective morphism from a projective variety to a projective curve $B$. Assume that
		\begin{enumerate}
			\item  The fiber product $W=W_1 \times_B W_2$ is irreducible.
			\item  We have $$p_1^*N^1(W_1)_{\RR}+p_2^*N^1(W_2)_{\RR}=N^1(W)_{\RR}.$$ 
		\end{enumerate}
		Then 
		$$p_1^*\Nef(W_1) + p_2^*\Nef(W_2)=\Nef(W).$$
		As a consequence, we also have
		$ p_1^*\Amp(W_1) + p_2^*\Amp(W_2)=\Amp(W)$.
	\end{theorem}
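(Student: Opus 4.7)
The easy inclusion $p_1^*\Nef(W_1)+p_2^*\Nef(W_2)\subseteq \Nef(W)$ is immediate, since $p_i^*$ sends nef classes to nef classes and sums of nef classes are nef. For the reverse inclusion, my plan is to use hypothesis (2) to decompose any $D\in \Nef(W)$ as $D\equiv p_1^*D_1+p_2^*D_2$ for some $\RR$-divisor classes $D_i$ on $W_i$, and then modify this decomposition within its inherent ambiguity until both summands become nef. The key observation I will rely on is that the identity $p_1^*\phi_1^*F=p_2^*\phi_2^*F$, for any class $F$ pulled back from the base curve $B$, produces a one-parameter family of valid decompositions
\[D\equiv p_1^*(D_1-t\phi_1^*F)+p_2^*(D_2+t\phi_2^*F), \qquad t\in \RR,\]
where $F$ will be taken to be a point class on $B$.

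Next I would test the nefness of the modified summands against curves $C_i\subseteq W_i$, sorted into \emph{vertical} (contained in a fiber of $\phi_i$) and \emph{horizontal} (dominating $B$, with degree $d_i:=\deg(\phi_i|_{C_i})>0$). For a vertical $C_1\subseteq W_1$ over $b\in B$, the intersection $(D_1-t\phi_1^*F)\cdot C_1$ is independent of $t$, and one sees it is nonnegative by lifting $C_1$ to $\tilde C_1:=C_1\times \{w_2\}\subseteq W$ for any chosen $w_2\in \phi_2^{-1}(b)$, giving $D_1\cdot C_1=D\cdot \tilde C_1\geq 0$; vertical curves in $W_2$ are treated symmetrically. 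Thus the constraints on $t$ come entirely from horizontal curves, and the existence of a valid $t$ reduces to the inequality
\[d_2(D_1\cdot C_1)+d_1(D_2\cdot C_2)\geq 0\]
for every pair of horizontal curves $(C_1,C_2)$.

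The crucial step, and the one I expect to be the main technical obstacle, is to extract this inequality from the nefness of $D$ by exhibiting a curve in $W$ that simultaneously detects $C_1$ and $C_2$. The natural candidate is the fiber product $Z:=C_1\times_B C_2\subseteq W$, which has pure dimension $1$ because both $C_i$ are horizontal over the curve $B$, and whose cycle-class pushforwards should be $p_{1*}[Z]=d_2\,[C_1]$ and $p_{2*}[Z]=d_1\,[C_2]$, yielding $0\leq D\cdot Z=d_2(D_1\cdot C_1)+d_1(D_2\cdot C_2)$ by the projection formula. The delicate point is justifying these pushforward identities cleanly when the $C_i$ are singular or $\phi_i|_{C_i}$ is non-\'etale at points of $B$; I plan to handle this by passing to the normalizations of the $C_i$, reducing to the generic degree of a finite map of smooth curves to $B$, and then pushing the resulting effective $1$-cycle back into $W$ via the natural map. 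Once the inequality is secured, any $t$ in the non-empty interval $[-\beta,\alpha]$ with $\alpha=\inf (D_1\cdot C_1)/d_1$ and $\beta=\inf (D_2\cdot C_2)/d_2$ produces the desired nef summands.

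Finally, the ampleness statement will be bootstrapped from the nef one. For any ample $H_i\in \Amp(W_i)$, the class $p_1^*H_1+p_2^*H_2$ is ample on $W$ (as the restriction of an ample class along the closed embedding $W\hookrightarrow W_1\times W_2$). Given $D\in \Amp(W)$, for small $\ep>0$ the class $D-\ep(p_1^*H_1+p_2^*H_2)$ remains ample and in particular nef, so the nef decomposition already obtained applies to it and yields $D=p_1^*(D_1+\ep H_1)+p_2^*(D_2+\ep H_2)\in p_1^*\Amp(W_1)+p_2^*\Amp(W_2)$.
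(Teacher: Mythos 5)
Your proposal is correct and follows essentially the same strategy as the paper: decompose $D$, perturb by $t\,\phi_i^*\OO_B(b)$, handle vertical curves by lifting to a section of the other factor, and handle horizontal curves via a curve in $C_1\times_B C_2$ and the projection formula. The paper packages this as three lemmas (vertical nonnegativity; at least one of $D_1,D_2$ is nef; each $D_i$ becomes nef after a sufficiently large fiber twist) and concludes by observing that the two resulting half-lines $I_1,I_2\subseteq\RR$ are nonempty and cover $\RR$, hence intersect — logically equivalent to your direct computation that $[-\beta,\alpha]$ is nonempty.

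One remark that would streamline the step you flag as delicate: you do not need the full cycle $[C_1\times_B C_2]$ nor the precise identities $p_{1*}[Z]=d_2[C_1]$, $p_{2*}[Z]=d_1[C_2]$. It suffices to pick a single irreducible component $\ti C$ of $C_1\times_B C_2$ dominating $B$; it then automatically dominates both $C_1$ and $C_2$, giving $p_{i*}\ti C=\gb_i C_i$ with $\gb_i=\deg(p_i|_{\ti C})\in\ZZ_{>0}$. Since $\gb_1 d_1=\gb_2 d_2=\deg(\ti C\to B)$, the inequality $0\le D\cdot\ti C=\gb_1 D_1\cdot C_1+\gb_2 D_2\cdot C_2$ already yields $d_2(D_1\cdot C_1)+d_1(D_2\cdot C_2)\ge 0$ after dividing by the common degree, with no normalization, flatness, or non-reducedness issues to manage. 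This is exactly what the paper does. Your bootstrap of the ample decomposition from the nef one is fine (the paper instead cites EGA II, Cor.~6.6.2 for the ample statement, but the $\ep$-argument you give is a standard alternative).
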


	In Examples~\ref{ex-nondecomp},~\ref{ex-nondecompflat}, and~\ref{ex-nondecompbis},
	we build examples of 
	fiber products over bases of higher dimension, that fail the implication ~\eqref{decomp-N1} $\Rightarrow$~\eqref{decomp-Nef}.  
	In Remark~\ref{rem-exMov}, we recall a classical example emphasizing that a similar decomposition does not 
	hold for the movable cone of divisors of a fiber product over a curve.
	
	We establish the following corollary to this first theorem.
	
	\begin{cor}\label{cor-extr}
		Keep the notations and assumptions of Theorem~\ref{thm-nefdec}.
		Then the extremal rays of the convex cone $\Nef(W)$ are exactly the pullbacks of the extremal rays of the two cones $\Nef(W_1)$ and $\Nef(W_2)$.
		In particular, the cone	$\Nef(W)$ is rational polyhedral
		if and only if the cones $\Nef(W_1)$
		and $\Nef(W_2)$ are both rational polyhedral.
	\end{cor}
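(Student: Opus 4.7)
The plan is to derive the corollary as a formal consequence of Theorem~\ref{thm-nefdec}, which gives the Minkowski-sum decomposition $\Nef(W) = p_1^*\Nef(W_1) + p_2^*\Nef(W_2)$, together with two elementary inputs. The first is a general principle about Minkowski sums of closed convex cones $A, B$ in a finite-dimensional real vector space: every extremal ray $R$ of $A + B$ is contained in $A$ or $B$ and is extremal in the summand containing it. The verification is three lines: if $R$ is generated by $\alpha = a + b$ with $a \in A$, $b \in B$, extremality of $R$ in $A+B$ forces both $a, b \in R$, so $R$ coincides with the ray through $a$ or the ray through $b$; and any decomposition of a generator inside the containing summand is \emph{a fortiori} one in $A+B$, preserving extremality. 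The second input is the injectivity of $p_i^*$ on $N^1$: since $p_i$ is surjective (because $\phi_2$ hits every point of $B$), every curve on $W_i$ lifts to a curve on $W$ with positive push-forward multiplicity, so $p_i^*\alpha \equiv 0$ forces $\alpha \equiv 0$ by the projection formula.

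With these ingredients, an extremal ray $R$ of $\Nef(W)$ must, by the principle above applied to Theorem~\ref{thm-nefdec}, lie in one of the summands---say $p_1^*\Nef(W_1)$---and correspond by injectivity to a unique ray $R_1 \subseteq \Nef(W_1)$, whose extremality in $\Nef(W_1)$ transfers back from $\Nef(W)$: any decomposition of the generator of $R_1$ pulls back to a decomposition of a generator of $R$, which remains on $R$ by extremality and so descends to $R_1$ by injectivity. This yields the extremal-ray description. For the rational-polyhedrality equivalence I would use separately the identity
$$p_i^*\Nef(W_i) = \Nef(W) \cap p_i^*N^1(W_i)_\RR,$$
which holds because nefness is detected on curves and curves lift and push forward under the surjective $p_i$. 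If $\Nef(W)$ is rational polyhedral, its intersection with the rational subspace $p_i^*N^1(W_i)_\RR$ is rational polyhedral and is identified with $\Nef(W_i)$ via injectivity; the converse is clear because rational polyhedrality is preserved under linear pullback and Minkowski sum.

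The main subtlety is whether the extremal-ray statement should be read as a full set equality or only as a one-way inclusion. The Minkowski-sum principle delivers cleanly the direction that every extremal ray of $\Nef(W)$ is a pullback of an extremal ray of some $\Nef(W_i)$; the reverse inclusion---that every such pullback remains extremal in the whole cone $\Nef(W)$---is not formal for abstract Minkowski sums and would require a fiber-geometric argument, showing that a decomposition $p_1^*\alpha_1 = \beta + \gamma$ with $\beta, \gamma \in \Nef(W)$ forces both summands to be numerically trivial on the fibers of $p_1$ (by intersecting with curves contained in those fibers, using nefness) and hence to lie back inside $p_1^*\Nef(W_1)$, where extremality of $\alpha_1$ finishes the argument. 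I expect this descent step to be the only nontrivial geometric obstacle; it does not affect the rational-polyhedrality equivalence, which is already handled by the subspace-intersection identity above.
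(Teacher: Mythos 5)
Your forward direction (extremal rays of $\Nef(W)$ are pullbacks of extremal rays of some $\Nef(W_i)$) is correct and is essentially identical to the paper's: the Minkowski-sum extremality principle together with injectivity of $p_i^*$. The subspace-intersection identity $p_i^*\Nef(W_i) = \Nef(W) \cap p_i^*N^1(W_i)_\RR$ and the way you use it for the polyhedrality equivalence is also sound, though the paper simply reads polyhedrality off from the extremal-ray equality.

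The gap is exactly where you flag it, and it is bigger than you suggest. Your proposed ``descent step'' asserts: if $\beta \in \Nef(W)$ satisfies $\beta \cdot C = 0$ for every curve $C$ in a fiber of $p_1$, then $\beta \in p_1^*\Nef(W_1)$. Unwinding this via Theorem~\ref{thm-nefdec}: write $\beta = p_1^*\beta_1 + p_2^*\beta_2$ with $\beta_i$ nef. The fiber-curve intersections force $\beta_2$ to be nef on $W_2$ and numerically trivial on all curves contained in fibers of $\phi_2$; your claim would then require $\beta_2 \in \phi_2^*\Nef(B)$, so that $p_2^*\beta_2 = p^*(\cdot) \in p_1^*N^1(W_1)_\RR$. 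This last implication---that a nef class on $W_2$ which is trivial on every $\phi_2$-vertical curve must be a nonnegative multiple of the pullback of a point from $B$---is precisely the statement that $\phi_2^*\cO_B(b)$ spans an \emph{extremal} ray of $\Nef(W_2)$. That is a nontrivial geometric fact in its own right (over a curve base it can be extracted from a Hodge-index argument on general complete-intersection surfaces, but it is not a formal consequence of the numerical triviality you established), and it is not something you have at your disposal ``for free''. As written, your descent step is circular in spirit: it invokes, for the single map $\phi_2\colon W_2 \to B$, an instance of the very extremality statement you are trying to prove for $W \to W_1$.

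The paper's proof of the reverse direction takes a different route that avoids stating this descent lemma. Given $p_1^*D = E + E'$ with $D$ extremal in $\Nef(W_1)$, it first reduces (by Carath\'eodory) to the case where $E$ is extremal in $\Nef(W)$, then applies Theorem~\ref{thm-nefdec} to decompose \emph{both} $E = p_1^*E_1 + p_2^*E_2$ and $E' = p_1^*E_1' + p_2^*E_2'$ with all summands nef. The extremality of $E$ pins $p_1^*E_1$ and $p_2^*E_2$ to the ray of $E$; simultaneously, from $p_1^*(D - E_1 - E_1') = p_2^*(E_2 + E_2') \in \Nef(W)$ and the projection formula one gets $D - E_1 - E_1'$ nef, so the \emph{extremality of $D$ in $\Nef(W_1)$} forces $E_1, E_1', D - E_1 - E_1'$ all proportional to $D$. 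Chaining the two families of proportionalities through $p_1^*E_1$ then shows $E$ and $E'$ are proportional to $p_1^*D$. Your proposal never uses the extremality of $D$ as a constraint on the $N^1(W_1)$-side summands $E_1, E_1'$; it instead tries to push all the information into the vertical direction, which is what creates the unproven descent claim. To fix your argument along your own lines you would need to prove the auxiliary extremality of $\phi_2^*\cO_B(b)$ in $\Nef(W_2)$ separately; along the paper's lines you do not need it, because both summands of $p_1^*D$ are analyzed through their full Theorem~\ref{thm-nefdec} decompositions at once.
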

	
	This corollary can be seen as a means to
	construct fiber products over curves, whose nef cones are not rational polyhedral.
	
	\ssec{Cone Conjecture for Schoen varieties}
	
	Among the strict Calabi--Yau manifolds (see Definition \ref{def-cymfd}) whose nef cones are 
	known to not be rational polyhedral, 
	to our knowledge, the Cone Conjecture is only known so far in two special cases.  
	One of them is the desingularized Horrocks--Mumford quintics, studied by Borcea in \cite{Bo91} (see also \cite{Fr01}); the other is the fiber product of two general rational elliptic surfaces with sections over $\PP^1$, constructed by Schoen in \cite{Sc88}, and investigated by Namikawa and Grassi--Morrison~\cite{Na91,GM93}. 
	Both examples are of dimension three.
	
	The main goal of this paper is to 
	prove the Cone Conjecture for generalizations
	of Schoen's Calabi--Yau threefolds, typically Calabi--Yau pairs, but also higher-dimensional strict Calabi--Yau varieties. In both cases, the underlying varieties, which we call {\it Schoen varieties}, are constructed as fiber products over $\PP^1$.
	
	Let us first summarize our construction defining Schoen varieties; 
	we refer to Subsections \ref{subsec-Wconstr} and \ref{subsec-Xconstr} for more details. 
	We start with Fano manifolds $Z_1$ and $Z_2$ of dimension at least two, which respectively admit an ample and globally generated divisor $D_i$ ($i=1,2$), such that $-(K_{Z_i}+D_i)$ is globally generated. 
	We take $W_i \subset \PP^1\times Z_i$ to be a general member in the linear system $|\OO_{\PP^1}(1)\boxtimes\OO_{Z_i}(D_i)|$. There is a fibration $\phi_i: W_i \to \PP^1$. We put another mild condition on the fibrations $\phi_1$ and $\phi_2$ to be general with respect to one another (see the second paragraph of Subsection \ref{subsec-Xconstr} for a precise statement). Consider the following fiber product over $\PP^1$
	\[ \phi: X:= W_1 \times_{\PP^1} W_2 \to \PP^1. \]
	Under our assumptions, the variety $X$ is smooth and projective. All varieties obtained through this procedure are called {\it Schoen varieties}.
	
	It follows from the construction that $-K_X$ is globally generated, so many effective $\QQ$-divisors are $\QQ$-linearly equivalent to $-K_X$.
	Any such $\QQ$-divisor $\gD$ yields a Calabi--Yau pair $(X,\gD)$, that we call a {\it Schoen pair}.
	
	We prove the following result.
	
	\begin{theorem}\label{main} 
		Let $(X,\gD)$ be a Schoen pair.
		Then there exists a rational polyhedral fundamental domain for the action of $\Aut(X, \Delta)$ on $\Nefe(X) =\Nefp(X) = \Nef(X)$.
	\end{theorem}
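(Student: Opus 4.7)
The plan is to reduce the Cone Conjecture for the Schoen variety $X$ to a Cone Conjecture-type statement for each factor $W_i$ via Theorem~\ref{thm-nefdec}, and then to combine the resulting fundamental domains, following the strategy of Grassi--Morrison in the threefold case. To invoke Theorem~\ref{thm-nefdec}, I would first verify that $X$ is irreducible (this follows from the generality conditions built into the construction of Schoen varieties) and that the Néron--Severi decomposition
\[ p_1^* N^1(W_1)_\RR + p_2^* N^1(W_2)_\RR = N^1(X)_\RR \]
holds. The latter should be provable by a Lefschetz-type argument using the embedding of $W_i$ as a smooth ample divisor in $\PP^1 \times Z_i$ together with a careful analysis of the fibration $X \to \PP^1$, whose general fiber is the product $F_1 \times F_2$ of general fibers of $\phi_1$ and $\phi_2$. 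Once both hypotheses are verified, Theorem~\ref{thm-nefdec} yields the key decomposition
\[ \Nef(X) = p_1^* \Nef(W_1) + p_2^* \Nef(W_2). \]

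\medskip

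Next, I would establish a Cone Conjecture-type statement for each factor $W_i$: for a suitable boundary $\Delta_i$ on $W_i$ (whose existence follows from the global generation of $-K_{W_i}$), find an arithmetic subgroup $\Gamma_i \subseteq \Aut(W_i, \Delta_i)$ whose action on $\Nef(W_i)$ admits a rational polyhedral fundamental domain $\Pi_i$. Each $W_i$ is the blowup of $Z_i$ along the base locus of a general pencil in $|D_i|$ and is fibered over $\PP^1$ with (log) Calabi--Yau fibers. The group $\Gamma_i$ should come from the automorphism structure of this fibration, generalizing the Mordell--Weil action on rational elliptic surfaces exploited by Grassi--Morrison in dimension three; the existence of $\Pi_i$ is then established via Looijenga's finiteness criterion \cite{Lo14}.

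\medskip

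Finally, automorphisms $f_i \in \Gamma_i$ inducing the same automorphism on the base $\PP^1$ can be combined into an automorphism $(f_1, f_2) \in \Aut(X)$ via the fiber product structure. A compatible choice of boundary $\Delta$ on $X$ (possible since $-K_X$ is globally generated) ensures these lifts preserve $\Delta$, giving a subgroup $\Gamma \subseteq \Aut(X, \Delta)$. Setting $\Pi \cnec p_1^* \Pi_1 + p_2^* \Pi_2$, the decomposition of $\Nef(X)$ from the first step together with the tiling property of the $\Pi_i$ under $\Gamma_i$ implies that $\Pi$ is a rational polyhedral fundamental domain for $\Gamma$ acting on $\Nef(X)$. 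The equalities $\Nefe(X) = \Nefp(X) = \Nef(X)$ follow from the global generation of $-K_X$, which allows any nef class to be perturbed to an effective one, combined with the decomposition. The principal obstacle is the second step: building the correct arithmetic group $\Gamma_i$ and verifying Looijenga's criterion in higher dimensions, where a classical analog of the Mordell--Weil group on $W_i$ is not directly available and must be constructed from the specific structure of $W_i$ as a Calabi--Yau fibration over $\PP^1$.
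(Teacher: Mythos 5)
Your overall strategy --- decompose $\Nef(X)$ via Theorem~\ref{thm-nefdec}, handle each factor, and combine via Looijenga --- does match the paper's. But the step you flag as the ``principal obstacle'' (building an arithmetic group $\Gamma_i$ for a higher-dimensional factor $W_i$ in the absence of a Mordell--Weil analog) is in fact a non-issue, and recognizing this is the essential point you are missing. When $\dim W_i \geq 3$, the nef cone $\Nef(W_i)$ is already a rational polyhedral cone generated by semiample classes: this follows from the Koll\'ar-type theorem (Theorem~\ref{kol}, or rather its generalization \cite[Proposition 3.5]{BI09}) applied to the smooth ample divisor $W_i \subset \PP^1 \times Z_i$, whose ambient Fano fourfold-or-higher has rational polyhedral nef cone. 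The same is true when $W_i$ is a weak del Pezzo surface. So in all cases except when $W_i$ is a rational elliptic surface, you can take $\Gamma_i$ to be trivial and $\Pi_i = \Nef(W_i)$; the nontrivial group (a Mordell--Weil action, via \cite[Theorem 8.2]{To08}) is only needed in the rational elliptic surface case, which is genuinely two-dimensional. Your proposal, as written, could not be completed without this observation, since the candidate groups you describe for higher-dimensional $W_i$ do not exist in any usable form.

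Two further points need care. First, the equalities $\Nefe(X)=\Nefp(X)=\Nef(X)$ do not follow from global generation of $-K_X$ as you suggest; global generation of $-K_X$ just gives you a boundary $\Delta$ and plays no role here. These equalities come from the corresponding equalities $\Nef(W_i)=\Nefp(W_i)=\Nefe(W_i)$ for the factors (Proposition~\ref{prop-lef}) pushed through the decomposition $\Nef(X)=p_1^*\Nef(W_1)+p_2^*\Nef(W_2)$. Second, showing that the lifted group $H_1\times H_2$ actually lies in $\Aut(X,\Delta)$ --- not merely in $\Aut(X)$ --- requires a case analysis on which of the $W_i$, if any, are rational elliptic surfaces, since the group is nontrivial only in that case and $\Delta$ may be nonzero; when, say, only $W_1$ is a rational elliptic surface, one must observe that $\Delta$ is necessarily pulled back from $W_2$ and is therefore $H_1\times\{\mathrm{id}\}$-invariant. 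Your proposal gestures at ``a compatible choice of boundary'' but leaves this unaddressed.
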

	
	Note that, by Corollary \ref{cor-extr},
	the cone $\Nef(X)$ is not rational polyhedral
	as soon as one of the cones $\Nef(W_i)$ ($i=1,2$) is not, typically if one of the factors $W_i$ is a rational elliptic surface with $Z_i\simeq\PP^2$ and $D_i=\OO_{\PP^2}(3)$. Using this remark, we provide in Example \ref{ex-infinite} the first series of strict Calabi--Yau manifolds (and Calabi-Yau pairs) of arbitrary dimension for which the Cone Conjecture holds, with nef cones that are not rational polyhedral.

	We finally mention two unsurprising consequences of Theorem \ref{main} (see Corollary~\ref{cor_finite}): The finite presentation of the group of components $\pi_0\Aut(X)$, and the finiteness of real forms on $X$, up to isomorphism.

	\subsection{Relation to other work}

	\sssec{Cone Conjecture}
	
	We refer to~\cite{LOP18} and the references therein
	for a survey of the Cone Conjecture for varieties (as opposed to pairs).
	As for the Cone Conjecture for Calabi--Yau pairs,
	its $2$-dimensional case was proven by Totaro~\cite{To10}.
	Kopper \cite{Ko20} also proved the Cone Conjecture for Calabi--Yau pairs arising from Hilbert schemes of points on certain rational elliptic surfaces; the underlying varieties in his work may have non rational polyhedral nef cones, but they only appear in even dimensions.
	The references~\cite{FHS21, LZ22} also contain some recent results.

	\sssec{Cone Conjectures for varieties with rational polyhedral nef cones}
	
	One way of proving the Cone Conjecture
	for a smooth projective variety $X$ is to show that $\Nef(X)$ is a rational polyhedral cone
	and that $\Nef(X) = \Nefe(X)$ (see e.g.~\cite[Proposition 6.5]{La13}).
	This is the case whenever $X$ is a smooth anticanonical hypersurface in a Fano manifold $Y$
	of dimension at least $4$, by the following theorem, due to~
	Koll\'{a}r \cite[Appendix]{Ko91}.
	
	\begin{theorem}\label{kol} Let $D$ be a smooth anticanonical hypersurface in a smooth Fano variety $Y$ of dimension at least $4$. Then the natural restriction map $\Nef(Y)\to\Nef(D)$ is an isomorphism. In particular, $\Nef(D)$ is a rational polyhedral cone, generated by classes of semiample divisors.
	\end{theorem}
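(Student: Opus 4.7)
The plan is to establish, in order, the isomorphism of N\'eron--Severi spaces, the two inclusions between nef cones, and finally the semiampleness of the generators. The overall strategy exploits that $Y$ is Fano, so that $\Nef(Y)$ is automatically rational polyhedral and its classes are semiample, together with the ampleness of $D$, which transfers geometric information from $Y$ down to $D$.

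First, I would invoke the Lefschetz hyperplane theorem for Picard groups (Grothendieck): since $D$ is a smooth ample divisor on the smooth projective variety $Y$ of dimension at least $4$, restriction induces an isomorphism $\Pic(Y) \to \Pic(D)$, whence an isomorphism $N^1(Y)_{\RR} \to N^1(D)_{\RR}$. Under this identification, the inclusion $\Nef(Y) \subseteq \Nef(D)$ is immediate since the restriction of a nef class is nef.

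The core of the proof is the reverse inclusion: given $\bar L \in \Nef(D)$, lift it to $L \in N^1(Y)_{\RR}$ via Lefschetz and show $L$ is nef on $Y$. Because $-K_Y$ is ample, the Cone Theorem makes $\NE(Y)$ rational polyhedral, generated by finitely many extremal rays $R_1, \dots, R_s$, each spanned by a rational curve. It suffices to verify $L \cdot R_i \geq 0$ for each $i$. The key geometric step, which I expect to be the main obstacle, is to produce, for each $R_i$, a curve $C_i \subset D$ whose class lies in $R_i$: once such a $C_i$ exists, $L \cdot C_i = \bar L \cdot C_i \geq 0$ as desired. To find $C_i$, I would consider the extremal contraction $\pi_i \colon Y \to Y_i$ of the ray $R_i$. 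By the Ionescu--Wi\'sniewski inequality, whenever a general fiber $F_i$ of $\pi_i$ satisfies $\dim F_i \geq 2$, the ample divisor $D$ cuts $F_i$ in a subscheme of positive dimension, producing a curve in $D$ that is contracted by $\pi_i$ and therefore has class in $R_i$. The delicate case is when all fibers of $\pi_i$ are one-dimensional (e.g.\ a conic bundle, or a smooth divisorial blow-up of a codimension two subvariety); here the hypothesis $\dim Y \geq 4$ combined with the Picard-rank identity $\rho(D) = \rho(Y)$ coming from Lefschetz should still force $\pi_i|_D \colon D \to \pi_i(D)$ to contract some curve, since otherwise a second Lefschetz-type comparison applied to $\pi_i(D) \subset Y_i$ would yield $\rho(Y) = \rho(D) = \rho(\pi_i(D)) = \rho(Y_i) = \rho(Y) - 1$, a contradiction.

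Once $L$ is known to be nef on $Y$, semiampleness follows directly: since $-K_Y$ is ample, $L - K_Y$ is the sum of a nef and an ample class, hence is nef and big, and the Kawamata--Shokurov basepoint-free theorem yields that $L$ is semiample. Its restriction to $D$ is then also semiample. Combining this with the rational polyhedrality of $\Nef(Y)$ (a direct consequence of the Cone Theorem on the Fano variety $Y$), one concludes that $\Nef(D) = \Nef(Y)$ is rational polyhedral and generated by classes of semiample divisors.
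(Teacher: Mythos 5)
The paper does not prove this theorem; it cites it directly from Koll\'ar's appendix to \cite{Ko91}, so there is no internal proof to compare against. Evaluating your reconstruction on its own: the outer scaffolding is sound (Grothendieck--Lefschetz for $\Pic$, the easy inclusion $\Nef(Y)\subseteq\Nef(D)$, the reduction to exhibiting a curve of $D$ on each extremal ray of $\NE(Y)$, and the Kawamata--Shokurov endgame for semiampleness). The case where the contraction $\pi_i$ has a fiber of dimension $\ge 2$ is also correctly dispatched by ampleness of $D$. But the ``delicate case'' of one-dimensional fibers, which you acknowledge is the crux, is not actually handled by the argument you sketch.

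Concretely, consider a fiber-type extremal contraction $\pi_i\colon Y\to Y_i$ all of whose fibers $F$ are one-dimensional. The Ionescu--Wi\'sniewski inequality gives $-K_Y\cdot F\le 2$. If $\pi_i|_D\colon D\to Y_i$ were finite, then since $\dim Y_i=\dim Y-1=\dim D$, the image $\pi_i(D)$ is all of $Y_i$, not an ample hypersurface in $Y_i$; so your ``second Lefschetz-type comparison'' has nothing to be applied to. In the sub-case $-K_Y\cdot F=1$ one can still salvage the Picard-number contradiction by Zariski's main theorem (a finite birational map onto the normal variety $Y_i$ would make $D\cong Y_i$, forcing $\rho(D)=\rho(Y)-1$), but in the sub-case $-K_Y\cdot F=2$ (e.g.\ a $\PP^1$-bundle contraction) the finite map $D\to Y_i$ would have degree two, and a degree-two cover carries no Picard-number identity: one only has the injection $\pi_i^*\colon N^1(Y_i)\hookrightarrow N^1(D)$, which is perfectly consistent with $\rho(D)=\rho(Y_i)+1=\rho(Y)$. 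Similar issues arise in the divisorial $1$-dimensional-fiber case, where $Y_i$ need not be smooth and $\pi_i(D)$ need not be ample, so Grothendieck--Lefschetz does not apply to $\pi_i(D)\subset Y_i$ either. You therefore still need a genuine argument (for instance a Chern-class count on $\pi_{i*}\cO_Y(-K_Y)$ forcing some fiber to lie inside $D$, or a deformation-theoretic argument on rational curves \`a la Koll\'ar) to show that $\pi_i|_D$ cannot in fact be finite; the proof as written leaves this gap open.
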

	
	Other Calabi--Yau pairs $(X, \Delta)$ for which $\Nef(X) = \Nefe(X)$ is rational polyhedral are described in the work of Coskun and Prendergast-Smith~\cite{PS12b, CPS14, CPS19}.
	
	\sssec{Fiber product constructions}
	
	Constructing Calabi--Yau threefolds as
	fiber products of two general rational elliptic surfaces with sections over $\PP^1$ was 
	first considered and investigated by Schoen~\cite{Sc88}.
	It recently came back to light as Suzuki considered a certain higher-dimensional generalization of Schoen's construction and studied its arithmetic properties in \cite{Su21}. Similar ideas are also involved in  
	Sano's constructions of non-K\"{a}hler Calabi--Yau manifolds with arbitrarily large second Betti number in \cite{Sa21}.

	\sssec{Cone conjecture for movable cones}
	We have already mentioned that there is a part of the Cone Conjecture concerned with movable cones~\cite[Conjecture 2.1.(2)]{To10}. It predicts 
	that a Calabi--Yau variety should have
	finitely many minimal models, up to isomorphism~\cite[Theorem 2.14]{CL14}). See~\cite{Mo96, Ka97, To10, LOP18} for related references. This part of the Cone Conjecture was verified for some cases. Notably, in \cite{CO15}, Cantat and Oguiso produced the first series of strict Calabi--Yau manifolds in arbitrary dimension whose movable cones are not rational polyhedral and for which the Cone Conjecture for movable cones holds. We refer to \cite{HSY22, ILW22, LW22, Wa22} and references therein for more results.

	In \cite{Na91} Namikawa showed that a certain strict Calabi--Yau threefold, constructed as a Schoen variety, has finitely many minimal models, up to isomorphism. Nonetheless, the Cone Conjecture is still unknown for the movable cone of divisors of this Calabi--Yau threefold. The Cone Conjecture for the nef cones of each of these minimal models is not known either. Similar questions could be asked for the Schoen varieties of higher dimension constructed here.

	\subsection{Structure of the paper}
	Section~\ref{prel} is devoted to some preliminaries and fundamental results. 
	We prove Theorem \ref{thm-nefdec} in Section \ref{dec}. 
	After constructing Schoen varieties and Schoen pairs in Section~\ref{cons},
	we prove Theorem \ref{main} in Section~\ref{prof}.

	\subsection*{Acknowledgments} 
	We thank Professors Serge Cantat, Tien-Cuong Dinh, Ching-Jui Lai, Vladimir Lazi\'c, Keiji Oguiso, Burt Totaro, Hokuto Uehara, and Claire Voisin for their questions, comments, suggestions, and encouragement. We are also grateful to the referee for careful reading and useful comments.
	The first author would like to thank JSPS Summer Program for providing the opportunity to visit the third author in Tokyo, where this paper was written. The third author would like to thank Department of Mathematics at National University of Singapore, Professor De-Qi Zhang and Doctor Jia Jia for warm hospitality.
	The first author is supported by the ERC Advanced Grant SYZYGY. This project has received funding from the European Research Council (ERC) under the EU Horizon 2020 program (grant agreement No. 834172).
	The second author is supported by
	the Ministry of Education Yushan Young Scholar Fellowship (NTU-110VV006)
	and the National Science and Technology Council
	(110-2628-M-002-006-).
	The third author is supported by JSPS KAKENHI Grant (21J10242), Postdoctoral Fellowship Program of CPSF (GZC20230535), and National Key Research and Development Program of China (\#2023YFA1010600).

	\section{Preliminaries}\label{prel}

	We work over the field $\CC$ of complex numbers throughout this paper. For notions of birational geometry, we refer to ~\cite{KM98}. 
	
	\subsection{Notations}\label{ssec-notation} Let $X$ be a projective variety. 
	We write $N^1(X)$ for the free abelian group generated by the classes of Cartier divisors modulo numerical equivalence. 
	
	Inside the vector space $N^1(X)_{\RR}:=N^1(X)\otimes \RR$, we denote by $\Nef(X)$ the \textit{nef cone}, i.e., the closure of the ample cone $\Amp(X)$, and by $\Eff(X)$ the \textit{effective cone}. The \textit{nef effective cone} $\Nefe(X)$ is defined as
	\[ \Nefe(X) := \Nef(X)\cap \Eff(X). \]
	Let $\Nefp(X)$ denote the convex hull of 
	\[ \Nef(X)\cap N^1(X)_{\QQ}, \]
	where $N^1(X)_{\QQ}:=N^1(X)\otimes \QQ$. We denote by $N_1(X)$ the group of $1$-cycles modulo numerical equivalence. The intersection product defines a perfect pairing between the two vector spaces $N^1(X)_{\RR}$ and $N_1(X)_{\RR}$. Under this pairing, the nef cone $\Nef(X)$ is dual to the \textit{Mori cone} $\NE(X)$, 
	which is by definition the closure of the convex cone of effective $1$-cycles in $N_1(X)_{\RR}$.

	The group of automorphisms of $X$ is denoted by $\Aut(X)$, and acts on $N^1(X)$ by pullback. This action 
	\[ \rho: \Aut(X) \to \GL(N^1(X))
	\]
	linearly  extends to $N^1(X)_{\RR}$, preserving the cones $\Nefe(X)$ and $\Nefp(X)$. The connected component of the identity in $\Aut(X)$ is a normal subgroup $\Aut^0(X)$, which acts trivially on $N^1(X)$ \cite[Lemma 2.8]{Br18}.

	\subsection{Klt Calabi--Yau pairs}\label{ssec-kltCY}
	
	A \textit{pair} is the data $(X, \Delta)$ of a normal projective variety $X$ together with an effective $\RR$-divisor $\Delta$ on $X$ such that $K_X + \Delta$ is $\RR$-Cartier.
	
	\begin{definition}\label{def-cypair} Following \cite{To10}, we say that a pair $(X, \Delta)$ is \textit{Calabi--Yau} if $X$ is $\QQ$-factorial and $K_X + \Delta$ is numerically trivial.
	\end{definition}
	
	Let us briefly recall the definition of a Kawamata log terminal (klt) pair. We start with a notation.
	For any pair $(X,\Delta)$ and any birational morphism $\mu : \ti{X} \to X$, there exists a unique $\RR$-divisor $\ti{\gD}$ on $\ti{X}$ such that $$K_{\ti{X}} + \ti{\gD} = \mu^*(K_X + \gD) \ \text{ and }\ \mu_*\ti{\gD} = \gD.$$ 
	
	A pair $(X, \Delta)$ is called \textit{klt}
	if, for any birational morphism $\mu : \ti{X} \to X$, when defining the divisor $\ti{\gD}$ as above, each irreducible component of $\ti{\gD}$ has coefficient 
	less than one.
	
	Note that if we can find one resolution of singularities $\mu : \ti{X} \to X$ whose corresponding divisor $\ti{\gD}$ has simple normal crossings, with irreducible components of coefficients less than one, then $(X,\gD)$ is klt.
	
	\begin{definition}\label{def-cymfd}
		Let $X$ be a smooth projective variety. We say that $X$ is a {\it Calabi--Yau manifold} 
		if its canonical line bundle $K_X$ is trivial and $h^i(X,\OO_X)=0$ for any $0<i<\dim X$. If in addition, $X$ is simply connected, we call it a {\it strict Calabi--Yau manifold}.
	\end{definition}
	
	\subsection{Looijenga's result}

	The following result is crucial in this paper.

	\begin{proposition}\label{pro-looij}
		
		Let $X$ be a projective variety 
		and let $H \le \Aut(X)$ be a subgroup.
		Assume that there is a rational polyhedral cone $\Pi\subset \Nefp(X)$ such that 
		$\Amp(X)\subset H \cdot\Pi.$ Then
		\begin{enumerate}
			\item $H\cdot \Pi = \Nefp(X)$, and the $H$-action on $\Nefp(X)$ has a rational polyhedral fundamental domain.  
			\item The group $\rho(H)$ is finitely presented. 
		\end{enumerate}
	\end{proposition}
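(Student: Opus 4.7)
The plan is to deduce both statements directly from Looijenga's general theorem on polyhedral covers of convex cones by group actions \cite{Lo14}, applied to the representation $\rho \colon H \to \GL(N^1(X)_\RR)$ acting on the convex cone $\Nefp(X)$, whose interior is $\Amp(X)$.

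For statement (1), I observe that the hypothesis $\Amp(X) \subseteq H \cdot \Pi$ says exactly that the $H$-orbit of the rational polyhedral cone $\Pi$ covers the interior of $\Nefp(X)$. Looijenga's theorem delivers two conclusions in this setting: first, that the covering automatically extends to the whole of $\Nefp(X)$, so $H \cdot \Pi = \Nefp(X)$; second, that one can refine $\Pi$ by intersecting it with finitely many half-spaces coming from the $H$-equivalences between codimension-one faces of $\Pi$, producing a rational polyhedral subcone $\Pi' \subseteq \Pi$ which is a strict fundamental domain for $H$ acting on $\Nefp(X)$.

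For statement (2), I would invoke the standard principle that a group acting on a connected convex set with a rational polyhedral fundamental domain is finitely presented. Concretely, $\rho(H)$ is generated by the finitely many elements $g \in \rho(H)$ sending $\Pi'$ to a translate sharing a codimension-one face with $\Pi'$, and all relations in $\rho(H)$ are consequences of the finite family of relations obtained by traversing loops around codimension-two faces of $\Pi'$. The finiteness of both sets is a consequence of the rational polyhedrality of $\Pi'$, and properness of the action on $\Amp(X)$ (which in turn uses that $\Aut^0(X)$ lies in the kernel of $\rho$ by \cite[Lemma 2.8]{Br18}) is what ensures this combinatorial procedure captures all of $\rho(H)$. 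This argument is also carried out in \cite{Lo14}.

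The main obstacle is really statement (1): upgrading the covering of the interior $\Amp(X)$ to a covering of the whole cone $\Nefp(X)$, and producing a genuine fundamental domain rather than just $\Pi$ itself. A priori, boundary rays of $\Nefp(X)$ could be irrational and merely limit to the $H$-orbit of $\Pi$ without being contained in it, while within $\Pi$ distinct points may be $H$-equivalent along shared faces. Looijenga's analysis of the accumulation of $H$-orbits at the boundary of a convex cone, combined with his combinatorial refinement using face identifications, handles both issues simultaneously; I would invoke this result as a black box rather than reproving it.
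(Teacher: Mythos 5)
Your high-level plan --- reduce to Looijenga's theorem applied to the cone $\Amp(X)$ inside $N^1(X)_\RR$ with $\Gamma = \rho(H)$ --- is the same as the paper's. But your argument has a genuine gap: you invoke Looijenga's result as a black box without verifying its hypotheses, and the hypothesis that actually requires work is the one you omit entirely.

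The version of Looijenga's theorem that the paper extracts (Lemma~\ref{looij}) requires, besides the covering condition $C \subset \Gamma \cdot \Pi$, the existence of a rational element $\xi$ in the interior of the dual cone $(C^\vee)^\circ$ whose stabilizer in $\Gamma$ is \emph{trivial}. This is not a formality: it is what makes the combinatorial fundamental-domain construction and the finite presentation argument work, and it does not follow from the covering hypothesis alone. Essentially all the content of the paper's proof goes into producing such a $\xi$. Concretely, the paper first proves Lemma~\ref{lem-trivstab} --- using Fujiki--Liebermann to get finite stabilizers on the ample cone and then a minimality-plus-discreteness argument to extract an ample integral class $\eta$ with \emph{trivial} stabilizer in $\rho(\Aut(X))$ --- and then runs a perturbation argument: starting from any rational $\xi \in (C^\vee)^\circ$, it minimizes $\xi$ over the $\Gamma$-invariant discrete set $\Sigma$ of trivially-stabilized integral ample classes, perturbs $\xi$ to $\xi_0$ so that the minimizer on $\Sigma$ is unique, and concludes that $\xi_0$ itself has trivial stabilizer. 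Your proposal mentions ``properness of the action'' in passing and gestures at $\Aut^0(X) \subset \Ker\rho$, but that only gives discreteness of $\rho(H)$, not the trivial-stabilizer condition on a dual element. Without the intermediate step, the appeal to Looijenga is unjustified, and statement (2) (finite presentation) in particular would not go through, since it depends on the same hypothesis in Looijenga's theorem.
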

	
	This result should be well-known to experts, but we include a proof
	for the sake of completeness.
	It relies on the
	fundamental results due to Looijenga~\cite[Proposition 4.1, Application 4.14, and Corollary 4.15]{Lo14}, which we extract and formulate here as Lemma~\ref{looij}.
	Recall that a convex cone $C \subset N_{\RR}$ in a finite dimensional $\RR$-vector space $N_{\RR}$ is called {\it strict} if its closure $\ol{C} \subset N_{\RR}$
	contains no line.
	
	\begin{lemma}\label{looij} 
		
		Let $N$ be a finitely generated free $\ZZ$-module, and let $C$ be a strict convex open cone in the $\RR$-vector space $N_{\RR}:= N\otimes \RR$. 
		Let $C^{+}$ be the convex hull of $\overline{C} \cap N_{\QQ}$.
		Let $(C^\vee)^\circ \subset N_{\RR}^\vee$ be the interior of the dual cone of $C$. 
		Let $\Gamma$ be a subgroup of $\GL(N)$ which preserves the cone $C$. Suppose that
		\begin{itemize}
			\item there is a rational polyhedral cone $\Pi \subset C^+$ such that $C \subset \Gamma\cdot\Pi$;
			\item there exists an element $\xi \in (C^\vee)^\circ \cap N_{\QQ}^\vee$ whose stabilizer in 
			$\Gamma$ (with respect to the dual action
			$\Gamma \cto N_{\QQ}^\vee$) is trivial.
		\end{itemize}	
		Then $\Gamma \cdot \Pi = C^+$
		and the $\Gamma$-action on $C^{+}$ has 
		a rational polyhedral fundamental domain. 
		Moreover, the group $\Gamma$ is finitely presented.
	\end{lemma}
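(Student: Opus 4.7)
The plan is to deduce each of the three conclusions by invoking a specific result from Looijenga's paper \cite{Lo14}, as the authors indicate. The three conclusions are: (a) $\Gamma \cdot \Pi = C^+$; (b) existence of a rational polyhedral fundamental domain; (c) finite presentation of $\Gamma$. The inclusion $\Gamma \cdot \Pi \subset C^+$ is immediate because $\Pi \subset C^+$ and $C^+$ is $\Gamma$-invariant (since $\Gamma$ preserves both $\overline{C}$ and $N_{\QQ}$). The substantive content of (a) is the reverse inclusion, which strengthens the hypothesis $C \subset \Gamma \cdot \Pi$ from the open interior $C$ to the rational convex hull $C^+$.

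For (a), I would directly apply \cite[Proposition 4.1]{Lo14}. The key issue is the behavior at rational boundary points of $\overline{C}$: one must show that every rational class in $\partial\overline{C}$ lying in $C^+$ is hit by some $\Gamma$-translate of $\Pi$. Looijenga handles this through an approximation argument, taking a sequence of interior classes $v_n \in C$ converging to a rational boundary class $v \in C^+$, choosing $g_n \in \Gamma$ with $g_n v_n \in \Pi$, and using rationality together with strictness of $C$ to show the sequence $g_n v$ has a limit in $\Pi$. The hypothesis $\Pi \subset C^+$ (as opposed to just $\Pi \subset \overline{C}$) is essential here, since it forces the accumulation analysis to take place in a set defined by rational data.

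For (b), once (a) is established, I would invoke \cite[Application 4.14]{Lo14}. The construction proceeds by slicing $C^+$ with the affine hyperplane $\{\langle \xi, \cdot \rangle = 1\}$, which is compact on $\overline{C}$ because $\xi \in (C^\vee)^\circ$. Covering the slice of $\Pi$ by finitely many $\Gamma$-translates of $\Pi$ and intersecting with appropriate half-spaces yields a rational polyhedral fundamental domain $\Pi_0 \subset \Pi$. It is here that the assumption of trivial stabilizer of $\xi$ in $\Gamma$ is used, to guarantee that the set $\{g \in \Gamma : g\Pi \cap \Pi \neq \emptyset \text{ near the slice}\}$ is finite and thus usable in a cutting procedure.

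For (c), I would deduce finite presentation from \cite[Corollary 4.15]{Lo14} by a standard combinatorial-topology argument once $\Pi_0$ is in hand. Let $S = \{g \in \Gamma : g \Pi_0 \cap \Pi_0 \neq \emptyset\}$; finiteness of $S$ again relies on compactness of the slice and the trivial-stabilizer condition on $\xi$, while the covering $C^+ = \bigcup_{g \in \Gamma} g \Pi_0$ implies $S$ generates $\Gamma$. The relations are read off from connectedness of the nerve of the cover, specifically from triple intersections $g_1 \Pi_0 \cap g_2 \Pi_0 \cap g_3 \Pi_0$. I expect step (a) to be the main obstacle, since the boundary extension of the covering statement is the technical core of Looijenga's work and requires care in distinguishing $C^+$ from $\overline{C}$; steps (b) and (c) then follow by more formal polyhedral and group-theoretic arguments built on (a).
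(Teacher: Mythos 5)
Your proposal follows essentially the same route as the paper: the paper gives no independent proof of this lemma, but presents it precisely as an extract of Looijenga's results \cite[Proposition 4.1, Application 4.14, and Corollary 4.15]{Lo14}, which is exactly the reduction you carry out. Your sketches of the internal mechanics of Looijenga's arguments (boundary approximation for the covering of $C^+$, slicing by the hyperplane defined by $\xi$, and the nerve argument for finite presentation) are plausible glosses but go beyond anything the paper itself verifies, since it simply cites these statements.
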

	
	To prove Proposition \ref{pro-looij}, it is key to connect abstract convex geometry as in Lemma \ref{looij} with the specifics of an automorphism group acting on an ample cone. That is the goal of the next lemma.

	\begin{lem}\label{lem-trivstab}
		Let $X$ be a projective variety.
		Then there exists an ample Cartier divisor on $X$, whose numerical class $\eta \in N^1(X)$ satisfies:
		For every $g \in \Aut(X)$,
		if $g^*\eta = \eta$, then
		$g^*$ is the identity on $N^1(X)$.
	\end{lem}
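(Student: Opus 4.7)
The goal is to find an ample Cartier class $\eta \in N^1(X)$ whose stabilizer in $H \cnec \rho(\Aut(X))$ is trivial, where $H$ is viewed as a subgroup of $\GL(N^1(X))$. The plan is a genericity argument inside the ample cone, the key ingredient being that $H$ acts \emph{properly discontinuously} on $\Amp(X)$. This is a standard fact, following from $H$ being a discrete subgroup of $\GL(N^1(X))$ that preserves both the lattice $N^1(X)$ and the open strictly convex cone $\Amp(X)$; it is essentially the content of Lemma~\ref{looij}, using that the dual cone $\NE(X)$ has interior points defined over $\QQ$ (for instance, the class of a very general complete intersection curve, paired with a suitable power of any ample class).

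Granting this, I would pick any ample Cartier class $\eta_0 \in N^1(X)$. Proper discontinuity immediately yields that the stabilizer $F \cnec \mathrm{Stab}_H(\eta_0)$ is finite. Next, I would build an open neighborhood $U$ of $\eta_0$ in $\Amp(X)$ such that $\mathrm{Stab}_H(\eta) \subset F$ for every $\eta \in U$. Concretely, fix a compact neighborhood $K$ of $\eta_0$ in $\Amp(X)$; by proper discontinuity, $S \cnec \{g \in H : g(K) \cap K \neq \varnothing\}$ is finite. For each $g \in S \setminus F$ one has $g(\eta_0) \neq \eta_0$, so by continuity there is an open neighborhood $U_g$ of $\eta_0$ on which $g$ has no fixed points. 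Setting $U \cnec \mathrm{int}(K) \cap \bigcap_{g \in S \setminus F} U_g$ gives the desired property: if $\eta \in U$ and $g \in H$ fixes $\eta$, then $g(K) \cap K \ni g(\eta) = \eta$, so $g \in S$, and since $\eta \in U_g$ for all $g \in S \setminus F$, we must have $g \in F$.

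Finally, for each $f \in F \setminus \{\id\}$, the fixed subspace $V_f \cnec \ker(f - \id) \subset N^1(X)_{\RR}$ is a proper linear subspace, since $f$ acts non-trivially on $N^1(X)$ by the very definition of $H = \rho(\Aut(X))$. Hence $U \setminus \bigcup_{f \in F \setminus \{\id\}} V_f$ is open and non-empty, so it contains a rational class $\eta'$. Clearing denominators gives an ample integer class $\eta \cnec n\eta' \in N^1(X)$ whose stabilizer in $H$ coincides with that of $\eta'$ (stabilizers are invariant under positive scaling). By the previous paragraph $\mathrm{Stab}_H(\eta') \subset F$, and by construction no element of $F \setminus \{\id\}$ fixes $\eta'$; therefore $\mathrm{Stab}_H(\eta) = \{\id\}$, which is precisely the assertion of the lemma.

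The main obstacle is a clean justification of the proper discontinuity of $H$ on $\Amp(X)$; the rest is routine topology and linear algebra. Once proper discontinuity is in hand, the finiteness of $F$ and the neighborhood-stability of stabilizers $\mathrm{Stab}_H(\eta) \subset F$ follow formally, and the genericity step is then a direct complement-of-finitely-many-hyperplanes argument, exactly as in the construction of elements with trivial stabilizer in Lemma~\ref{looij}.
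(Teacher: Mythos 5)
Your proof is correct and closely parallel to the paper's, with two notable differences in the technical inputs. To get finiteness of the stabilizer of a rational ample class, the paper invokes Fujiki--Liebermann's theorem (\cite[Theorem 2.10]{Br18}), a geometric result about automorphism groups; you instead deduce it from proper discontinuity of the $\rho(\Aut(X))$-action on $\Amp(X)$ --- a convex-geometry fact, standard for a lattice-preserving subgroup of $\GL(N^1(X))$ acting on the proper open convex cone $\Amp(X)$, though attributing it to Lemma~\ref{looij} is a stretch since that lemma states something else (fundamental domains). For the final perturbation, you excise the finitely many proper fixed subspaces $V_f$ of the stabilizer elements from a small neighborhood and pick a rational class in the complement; the paper instead picks $\eta$ minimizing the order of its stabilizer among rational ample classes and deduces from minimality that every stabilizing $\gamma$ fixes a whole open set of rational classes, hence is the identity. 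Both arguments ultimately rely on the same local properness statement --- an open $U \ni \eta_0$ with $\gamma U \cap U = \varnothing$ for every $\gamma$ outside the stabilizer of $\eta_0$ --- which you flag as the crux, and which the paper itself justifies rather tersely (``since the discrete set $N^1(X)$ is preserved\ldots''). Overall the two proofs are morally equivalent: your version makes the generic-point hyperplane-avoidance explicit, while the paper's minimality trick is a clean shortcut around it.
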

	
	\begin{proof}
		Our proof is inspired by the argument
		of \cite[Proposition 6.5]{La13}.
		
		Let $\Gamma \cnec \rho(\Aut(X)) < \GL(N^1(X))$.
		For every $\gt \in N^1(X)_{\QQ}$,
		let $\Gamma_{\gt}$ denote the subgroup of $\Gamma$ stabilizing $\gt$. We want to find an element $\eta \in \Amp(X)\cap N^1(X)$ such that
		$\Gamma_\eta$ is trivial. By linearity, it is sufficient to find such an element in $\Amp(X)\cap N^1(X)_{\QQ}$.
		
		By Fujiki--Liebermann's theorem~\cite[Theorem 2.10]{Br18}, for every element $\gt$ in $\Amp(X)\cap N^1(X)_{\QQ}$, the stabilizer $\Gamma_{\gt}$ is finite.
		Pick an element $\eta\in \Amp(X)\cap N^1(X)_{\QQ}$ 
		such that $\Gamma_{\eta}$ has the smallest possible order. Since the discrete set $N^1(X)$ is preserved by the action of $\Gamma$, there is an open neighborhood $U \subset \Amp(X)$ of $\eta$ such that, for every $\gamma\in\Gamma\setminus\Gamma_{\eta}$, the intersection $\gamma U \cap U$ is empty. In particular, for every $\gt \in U\cap N^1(X)_{\QQ}$, we have $\Gamma_{\gt} \subset \Gamma_{\eta}$, so $\Gamma_{\gt}  =  \Gamma_{\eta}$ by the minimality assumption on $\eta$. Hence, we have $$\gamma_{|U\cap N^1(X)_{\QQ}} = \mathrm{id}_{U\cap N^1(X)_{\QQ}},$$ which extends by linearity to $\gamma = \mathrm{id}$. So the stabilizer $\Gamma_{\eta}$ is trivial, which concludes the proof.
	\end{proof}
	
	We can now establish Proposition \ref{pro-looij}.
	
	\begin{proof}[Proof of Proposition \ref{pro-looij}] 
		
		Let us set $N=N^1(X)$, $C=\Amp(X)$, and $\Gamma=\rho(H)$. 
		To apply Lemma \ref{looij} in this set-up, it suffices to
		construct an element $\xi_0\in (C^{\vee})^{\circ} \cap N^{\vee}_{\QQ}$ with trivial stabilizer with respect to the dually induced $\Gamma$-action. Start by picking any $\xi\in (C^{\vee})^{\circ} \cap N^{\vee}_{\QQ}$.
		
		The idea is to find a minimizer $\eta$ for the linear functional $\xi$ on the set
		$$\gS \cnec \Set{\eta \in C \cap N | \Gamma_{\eta} \text{ is trivial}},$$
		and to relate the stabilizer of $\eta$ (which is then trivial by construction) to the stabilizer of $\xi$ (which we want to be trivial).
		
		Note that $\Sigma$ is non-empty by Lemma~\ref{lem-trivstab}, and discrete. By definition, the linear form $\xi$ takes positive values on the whole convex set $\ol{C}\setminus\{0\}$. Picking a large enough positive integer $r$, the intersection
		$$\Sigma\cap\{x\in \ol{C}\mid \xi(x)\le r\}$$
		is now non-empty and finite. Minimizing $\xi$ on this finite set is equivalent to minimizing it on $\Sigma$, and thus $\xi$ has finitely many minimizers in $\Sigma$.
		
		Since $C\cap N$ is discrete, we can now perturb $\xi$ into a new linear form $\xi_0\in (C^{\vee})^{\circ} \cap N^{\vee}_{\QQ}$, which has exactly one minimizer $\eta$ on $\Sigma$.    
		As the set $\gS$ is $\Gamma$-invariant and as $\Gamma_{\eta}$ is trivial, we have, for any non-trivial $\gamma\in\Gamma$, that $\gamma\eta\in\Sigma\setminus\{\eta\}$, and in particular
		$$(\gamma \xi_0)(\eta) = \xi_0(\gamma \eta) > \xi_0(\eta).$$
		So the stabilizer of $\xi_0$ in $\Gamma$ is trivial.
	\end{proof}
	
	We prove a simple corollary of Proposition \ref{pro-looij}.
	
	\begin{cor}\label{cor-equivconj} 
		Conjecture~\ref{cone} and Conjecture~\ref{conep} are equivalent.
	\end{cor}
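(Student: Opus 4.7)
The plan is to prove the two implications separately, with the backward one being essentially a triviality and the forward one relying on the machinery of Proposition~\ref{pro-looij} that has just been established.

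For the direction Conjecture~\ref{conep} $\Rightarrow$ Conjecture~\ref{cone}, I would simply note that if $\Pi \subset \Nefp(X)$ is a rational polyhedral fundamental domain for $\Aut(X,\Delta)$ on $\Nefp(X)$, then under the hypothesis $\Nefp(X) = \Nefe(X)$ the cone $\Pi$ also sits inside $\Nefe(X)$ and remains a fundamental domain there.

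For the substantive direction Conjecture~\ref{cone} $\Rightarrow$ Conjecture~\ref{conep}, let $\Pi \subset \Nefe(X)$ be a rational polyhedral fundamental domain for the action of $H := \Aut(X,\Delta)$ on $\Nefe(X)$. First I would observe that $\Pi \subset \Nefp(X)$: by rationality $\Pi$ is the convex hull of finitely many rays spanned by classes in $\Nef(X) \cap N^1(X)_{\QQ}$, and these generate a subset of $\Nefp(X)$ by definition. Next I would verify the hypothesis of Proposition~\ref{pro-looij}, namely $\Amp(X) \subset H \cdot \Pi$: indeed any ample class is effective (some multiple is represented by an effective divisor), so $\Amp(X) \subset \Nef(X) \cap \Eff(X) = \Nefe(X) = \bigcup_{g \in H} g^*\Pi$ by Conjecture~\ref{cone}. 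Applying Proposition~\ref{pro-looij}(1), I obtain that $H \cdot \Pi = \Nefp(X)$ and that the $H$-action on $\Nefp(X)$ admits a rational polyhedral fundamental domain, which gives part~(1) of Conjecture~\ref{conep}.

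To finish, it remains to deduce part~(2), i.e.\ $\Nefp(X) = \Nefe(X)$. The inclusion $\Nefp(X) \subset \Nefe(X)$ follows from $\Nefp(X) = H \cdot \Pi$ together with $\Pi \subset \Nefe(X)$ and the $H$-invariance of $\Nefe(X)$. Conversely, since $\Nefe(X) = \bigcup_{g \in H} g^* \Pi$ and each $g^*\Pi$ lies inside $\Nefp(X)$ (as $\Nefp(X)$ is $H$-invariant and contains $\Pi$), we get $\Nefe(X) \subset \Nefp(X)$. Both inclusions together give equality.

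The only mildly delicate step is the verification $\Amp(X) \subset \Nefe(X)$, but this is just the standard fact that ample divisors are effective; the rest consists of combining Conjecture~\ref{cone} with Proposition~\ref{pro-looij} and unwinding definitions. I do not anticipate any genuine obstacle.
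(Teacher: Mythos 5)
Your proof is correct and follows essentially the same route as the paper's: the nontrivial direction is handled by noting that the rational polyhedral fundamental domain $\Pi$ from Conjecture~\ref{cone} sits in $\Nefp(X)$ and covers $\Amp(X)$ under the $\Aut(X,\Delta)$-action, so Proposition~\ref{pro-looij}(1) applies. You spell out the verification of $\Amp(X)\subset H\cdot\Pi$ and the derivation of $\Nefp(X)=\Nefe(X)$ in more detail than the paper, which leaves both implicit, but the underlying argument is identical.
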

	
	\begin{proof}
		Clearly, Conjecture~\ref{conep} implies
		Conjecture~\ref{cone}.
		Now, fix a pair $(X,\gD)$ for which Conjecture~\ref{cone} holds. Let $\Pi\subset \Nefe(X)$ be a rational polyhedral fundamental domain for the action of $\Aut(X,\gD)$ on $\Nefe(X)$. Then $\Pi\subset \Nefp(X)$ by definition of $\Nefp(X)$. By Proposition \ref{pro-looij}.(1), 
		$$\Nefe(X) = \Aut(X,\gD) \cdot \Pi = \Nefp(X).$$
		So Conjecture~\ref{conep} holds. 
	\end{proof}

	\section{The nef cone of a fiber product over a curve}\label{dec}
	
	In this section, we prove Theorem \ref{thm-nefdec}. Let us recall the notations.
	For $i=1,2$, the map $\phi_i : W_i \to B$ is a surjective morphism from a projective variety to a projective curve $B$. We consider the fiber product
	\[ \xymatrix@=1.5em{ & W = W_1 \times_{B} W_2 \ar[dl]^{p_1} \ar[dd]_{p} \ar[dr]_{p_2}   \\
		W_1 \ar[dr]_{\phi_1} & &     W_2 \ . \ar[dl]^{\phi_2}   \\
		& B } \]
	and work under the following assumptions:
	\begin{enumerate}
		\item  The fiber product $W = W_1 \times_{B} W_2$ is  irreducible;
		\item For every $D \in N^1(W)_{\RR}$, there exist $D_1 \in N^1(W_1)_{\RR}$ and $D_2 \in N^1(W_2)_{\RR}$ such that \[ D = p_1^*D_1 + p_2^*D_2. \]
	\end{enumerate}

	\begin{proof}[Proof of Theorem \ref{thm-nefdec}.]
		Let us fix $D \in \Nef(W)$ and consider a decomposition in real classes 
		$$D = p_1^*D_1 + p_2^*D_2 \in N^1(W)_{\RR}$$
		as in Assumption (2) right above. We prove three lemmas regarding the positivity of these two summands $D_1$ and $D_2$.
		
		\begin{lem}\label{lem-vert}
			Fix $i=1,2$. Let $C_i$ be a curve contained in a fiber of $\phi_i: W_i\to B$. Then $ D_i \cdot C_i \ge 0$.
		\end{lem}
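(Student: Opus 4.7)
The plan is to reduce the claim $D_i \cdot C_i \ge 0$ to the nefness of $D$ on $W$ by producing, inside the fiber product, a curve $\tilde{C} \subset W$ which sees only the class $D_i$ in the decomposition $D = p_1^*D_1 + p_2^*D_2$. Concretely, I would work with $i=1$ (the case $i=2$ is symmetric). Since $C_1$ is contained in a fiber of $\phi_1$, there is a point $b \in B$ with $\phi_1(C_1) = \{b\}$. Because $\phi_2 : W_2 \to B$ is surjective, one can pick a point $x_2 \in W_2$ with $\phi_2(x_2) = b$.

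Next, I would consider the subscheme $\tilde{C} := C_1 \times \{x_2\}$ of $W_1 \times_B W_2 = W$: this makes sense inside the fiber product because $C_1$ and $\{x_2\}$ both lie over the same point $b$, so we are really taking the preimage of $x_2$ under $p_2$ restricted to $p_1^{-1}(C_1)$. By construction, the projection $p_1$ restricts to an isomorphism $\tilde{C} \to C_1$, while the projection $p_2$ contracts $\tilde{C}$ to the point $x_2$. In particular, $(p_1)_* \tilde{C} = C_1$ and $(p_2)_* \tilde{C} = 0$ in $N_1$.

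Finally, I would conclude by the projection formula together with the nefness of $D$:
\[
0 \le D \cdot \tilde{C} = p_1^* D_1 \cdot \tilde{C} + p_2^* D_2 \cdot \tilde{C} = D_1 \cdot (p_1)_* \tilde{C} + D_2 \cdot (p_2)_* \tilde{C} = D_1 \cdot C_1.
\]
There is no real obstacle here: the only thing to check is that $\tilde{C}$ is a genuine curve inside $W$, which follows from the irreducibility assumption on the fiber product together with the surjectivity of $\phi_2$, guaranteeing the existence of $x_2$. The argument does not use that $C_i$ is irreducible or reduced, only that it is a $1$-cycle supported in a fiber of $\phi_i$.
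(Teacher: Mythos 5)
Your proposal is correct and takes essentially the same approach as the paper: both lift $C_1$ to a curve in $W$ via a fiber product with a single point $x_2 \in \phi_2^{-1}(b)$, which kills the $p_2^*D_2$ term, and then conclude from the nefness of $D$ together with the projection formula.
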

		\begin{proof}
			By symmetry, we can focus on $i = 1$. Fix any point $s \in \phi_2^{-1}(\phi_1(C_1))$ and consider the fiber product	$\ti{C_1} \cnec C_1 \times_{B} \{s\}$, which can be seen as a curve in $W_1$.
			We have
			$$0 \le D \cdot \ti{C_1} = (p_1^*D_1 + p_2^*D_2) \cdot \ti{C_1}
			= D_1 \cdot p_{1*}\ti{C_1} + D_2 \cdot p_{2*}\ti{C_1} = D_1 \cdot C_1.$$
			This proves the lemma. 
		\end{proof}
		
		\begin{lem}\label{lem-nef}
			Either $D_1$ or $D_2$ is nef.
		\end{lem}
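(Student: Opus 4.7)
The plan is to argue by contradiction. Suppose that neither $D_1$ nor $D_2$ is nef. Then we can choose irreducible curves $C_1 \subset W_1$ and $C_2 \subset W_2$ with $D_1 \cdot C_1 < 0$ and $D_2 \cdot C_2 < 0$. By Lemma~\ref{lem-vert}, neither $C_i$ can be contained in a fiber of $\phi_i$, so each restriction $\phi_i|_{C_i} : C_i \to B$ is surjective. Denote by $d_i \ge 1$ its degree.

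The geometric heart of the argument is to form the fiber product $\widetilde{C} := C_1 \times_B C_2$, which naturally embeds into $W$ by the universal property. Since $\phi_1(C_1) = \phi_2(C_2) = B$, the subscheme $\widetilde{C}$ is non-empty; since each $\phi_i|_{C_i}$ is finite (a surjective map between curves), the other projection $\widetilde{C} \to C_i$ is finite as well, whence every component of $\widetilde{C}$ has dimension exactly $1$. Viewing $\widetilde{C}$ as an effective $1$-cycle on $W$, the degrees of the generically finite projections yield
\[ p_{1*} \widetilde{C} = d_2 \cdot C_1 \quad \text{and} \quad p_{2*} \widetilde{C} = d_1 \cdot C_2 \]
in $N_1(W_1)$ and $N_1(W_2)$ respectively. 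Combining this with the projection formula and the nefness of $D$ gives
\[ 0 \le D \cdot \widetilde{C} = D_1 \cdot p_{1*}\widetilde{C} + D_2 \cdot p_{2*}\widetilde{C} = d_2 (D_1 \cdot C_1) + d_1 (D_2 \cdot C_2), \]
which contradicts the choice of $C_1$ and $C_2$.

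The only point requiring care is the justification that $\widetilde{C}$ is a well-defined effective $1$-cycle on $W$ with the stated pushforwards. Non-emptiness is immediate from $\phi_1(C_1) \cap \phi_2(C_2) = B$; pure one-dimensionality follows from the dimension formula for fiber products combined with the finiteness of both $\phi_i|_{C_i}$; and the degree computation reduces to the fact that the projection $C_1 \times_B C_2 \to C_1$ is the base change of $\phi_2|_{C_2}$ along $\phi_1|_{C_1}$, hence of degree $d_2$. These are the only technical points; once they are in place, the projection-formula contradiction closes the proof.
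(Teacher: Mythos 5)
Your proof is correct and takes essentially the same approach as the paper: contradict via curves $C_i$ with $D_i \cdot C_i < 0$, form $C_1 \times_B C_2$ inside $W$, push forward, and invoke the projection formula together with the nefness of $D$. The only cosmetic difference is that the paper extracts a single irreducible curve $\ti{C} \subset C_1 \times_B C_2$ dominating $B$ and writes $p_{i*}\ti{C} = \gb_i C_i$ for some unspecified $\gb_i \in \ZZ_{>0}$, whereas you push forward the full fundamental cycle of $C_1 \times_B C_2$ and identify the coefficients explicitly as the degrees $d_1, d_2$; both choices yield the same contradiction.
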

		\begin{proof}
			Assume by contradiction that both $D_1$ and $D_2$ are not nef.
			Then for each $i$, there exists a curve $C_i$ in $W_i$
			such that $D_i \cdot C_i < 0$.
			Note that since the fiber product $W$ is assumed to be irreducible, the base $B$ is also irreducible. Hence, and
			by Lemma~\ref{lem-vert}, 
			we have $\phi_i(C_i) = B$. So the (possibly reducible) fiber product $C_1 \times_{B} C_2$ contains a curve $\ti{C}$ dominating $B$.
			Let $\gb_1,\gb_2 \in \ZZ_{>0}$
			be such that $p_{i*}\ti{C} = \gb_i C_i$. 
			Then on one hand, 
			$$\gb_1 D_1 \cdot C_1  + \gb_2 D_2 \cdot C_2 < 0;$$ 
			and on the other hand, 
			$$\gb_1 D_1 \cdot C_1  + \gb_2 D_2 \cdot C_2 = (p_1^*D_1 + p_2^*D_2) \cdot \ti{C} = D \cdot \ti{C} \ge 0.$$ 
			This is a contradiction. 
		\end{proof}
		
		For the third lemma, we fix a point $b \in B$. 
		
		\begin{lem}\label{lem-fib}
			Fix $i=1,2$. Then there exists $N_i\in\RR$ such that for any real number $n\ge N_i$, the divisor $D_i + n \phi_i^*\OO_B(b)$ is nef.
		\end{lem}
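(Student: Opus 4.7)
The plan is to use a variant of the test-curve construction from Lemma~\ref{lem-nef}. I would fix a curve $C_j \subset W_j$ dominating $B$ (with $j$ the index complementary to $i$); such a curve exists as a general complete intersection of hyperplane sections in $W_j$, since $W_j$ is an irreducible projective variety (as the image of the irreducible $W$ under the surjective projection $p_j$) and $\phi_j$ is surjective. The candidate threshold is
$$N_i \cnec \max\left(0,\ \frac{D_j \cdot C_j}{\deg(\phi_j|_{C_j})}\right),$$
and the claim is that $D_i + n\phi_i^*\OO_B(b)$ is nef for every $n \ge N_i$.

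To verify the claim, I would take an arbitrary irreducible curve $C \subset W_i$ and split into two cases. If $\phi_i(C)$ is a point, then $\phi_i^*\OO_B(b) \cdot C = 0$ and $D_i \cdot C \ge 0$ by Lemma~\ref{lem-vert}, so the intersection with $D_i + n\phi_i^*\OO_B(b)$ is non-negative for every $n$. If $\phi_i(C) = B$, I would form a test curve $\ti{C}$ in $W$ as a one-dimensional irreducible component of the scheme $C \times_B C_j \subset W$; such a component exists and dominates both $C$ and $C_j$, since both $C$ and $C_j$ already dominate $B$. Writing $p_{i*}\ti{C} = \gb_i C$ and $p_{j*}\ti{C} = \gb_j C_j$ with $\gb_i,\gb_j \in \ZZ_{>0}$, the identity $\phi_i \circ p_i = \phi_j \circ p_j$ evaluated on $\ti{C}$ forces the degree relation $\gb_i \deg(\phi_i|_C) = \gb_j \deg(\phi_j|_{C_j})$.

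The core estimate then combines the nefness of $D$ on $W$ with this degree relation. From
$$0 \le D \cdot \ti{C} = \gb_i\, D_i \cdot C + \gb_j\, D_j \cdot C_j,$$
dividing by the common quantity $\gb_i \deg(\phi_i|_C) = \gb_j \deg(\phi_j|_{C_j})$ yields
$$\frac{D_i \cdot C}{\deg(\phi_i|_C)} \ge -\frac{D_j \cdot C_j}{\deg(\phi_j|_{C_j})},$$
after which $(D_i + n\phi_i^*\OO_B(b))\cdot C \ge 0$ follows for any $n \ge N_i$, using the identity $\phi_i^*\OO_B(b) \cdot C = \deg(\phi_i|_C)$.

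I do not anticipate a substantive obstacle: the argument amounts to a single numerical estimate assembled from Lemma~\ref{lem-vert} and the nefness of $D$. The conceptual content is simply that every curve in $W_i$ dominating $B$ can be lifted to a curve in $W$ by pairing it with a fixed reference curve $C_j \subset W_j$, so the nefness of $D$ on $W$ uniformly controls the failure of $D_i$ to be nef in terms of a bounded amount of the fiber class $\phi_i^*\OO_B(b)$.
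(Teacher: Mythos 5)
Your proof is correct and takes essentially the same approach as the paper: both fix a reference curve $C_j \subset W_j$ dominating $B$, lift each test curve $C \subset W_i$ to a curve $\ti{C}$ inside $C \times_B C_j$, and deduce the required intersection bound from the nefness of $D$ on $W$, the degree relation coming from $\phi_i \circ p_i = \phi_j \circ p_j$, and Lemma~\ref{lem-vert} for vertical curves. The paper merely reorganizes the same arithmetic via the auxiliary classes $D_1' = D_1 - N_2\phi_1^*\OO_B(b)$ and $D_2' = D_2 + N_2\phi_2^*\OO_B(b)$ so that $D_1' \cdot C_1 = 0$ (and your $\max(0,\cdot)$ is harmless but unnecessary).
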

		
		\begin{proof}
			By symmetry, we can focus on $i=2$. Let $C_1$ be a curve in $W_1$ such that $\phi_1(C_1) = B$.
			Set
			$$N_2 \cnec \frac{D_1 \cdot C_1}{ \deg (C_1 \xto{\phi_1} B)},$$
			and consider the following classes
			$$D_1' \cnec D_1 - N_2 \phi_1^*\OO_B(b) \ \ \text{ and } \ \ D_2' \cnec D_2 + N_2 \phi_2^*\OO_B(b).$$
			By construction, we have $D'_1 \cdot C_1 = 0\mbox{ and }D = p_1^*D_1' + p_2^*D_2'.$
			
			We want to show that $D_2'$ is nef. Let $C_2$ be a curve in $W_2$. If it is contained in a fiber of $\phi_2$, then $D'_2 \cdot C_2 \ge 0$ by Lemma~\ref{lem-vert}.
			Suppose now that $\phi_2(C_2) = B$, let   $\ti{C}$ be a curve in the fiber product $C_1 \times_{B} C_2$ dominating $B$, and define $\gb_1,\gb_2 \in \ZZ_{>0}$
			such that $p_{i*}\ti{C} = \gb_i C_i$.
			We have
			\begin{align*}
				\gb_2 D'_2 \cdot C_2   
				&= \gb_1 D'_1 \cdot C_1  + \gb_2 D'_2 \cdot C_2 \\
				&= (p_1^*D'_1 + p_2^*D'_2) \cdot \ti{C}\\ 
				&= D  \cdot \ti{C} \ge 0.
			\end{align*}
			So $D'_2$ is nef. A fortiori, for $n\ge N_2$, the following class 
			$$D_2+n\phi_2^*\OO_B(b) = D'_2+(n - N_2)\phi_2^*\OO_B(b)$$
			is also nef. 
		\end{proof}
		
		Let us resume the proof of Theorem \ref{thm-nefdec}. For any $t \in \RR$, let 
		$$D_1(t) \cnec D_1 - t \phi_1^*\OO_B(b) \ \ \text{ and } \ \ D_2(t) \cnec D_2 + t \phi_2^*\OO_B(b).$$ 
		By Lemma~\ref{lem-fib}, we can define intervals
		$$I_1 = ]-\infty,-N_{1,\mathrm{min}}] \ \ \text{ and } \ \ I_2 = [N_{2,\mathrm{min}}, +\infty[$$
		such that $D_i(t)$ is nef if and only if $t \in I_i$.
		Since we have for all $t\in\RR$,
		$$		D = p_1^*D_1(t) + p_2^*D_2(t),$$
		Lemma~\ref{lem-nef} shows that either $D_1(t)$ or $D_2(t)$ is nef, i.e., $I_1 \cup I_2 = \RR$.
		Hence, $I_1 \cap I_2$ is non-empty, and fixing an element $t$ in this intersection, both
		$D_1(t)$ and $D_2(t)$ are now nef, 
		giving a desired decomposition.
		
		The decomposition of the ample cone of $W$ finally follows from the decomposition of the nef cone by~\cite[Corollary 6.6.2]{MR0274683}.
	\end{proof}
	
	\begin{remark} In the setup of Theorem~\ref{thm-nefdec}, we also have a decomposition of the relative nef cone 
		\[ \Nef(W/B) = p_1^*\Nef(W_1/B) + p_2^*\Nef(W_2/B) 
		\]
		by the projection formula -- this is exactly Lemma \ref{lem-vert}.
	\end{remark}
	
	As a consequence of Theorem \ref{thm-nefdec}, we prove Corollary~\ref{cor-extr}.
	
	\begin{proof}[Proof of Corollary~\ref{cor-extr}]
		
		First, consider $E\in\Nef(W)$ spanning an extremal ray of $\Nef(W)$. Then by Theorem \ref{thm-nefdec}, there is a decomposition $E=p_1^*E_1+p_2^*E_2$, with $E_i\in\Nef(W_i)$, and either $E_1$ or $E_2$ is non zero. By extremality, $E$ is thus either in $p_1^*\Nef(W_1)$, or in $p_2^*\Nef(W_2)$.
		By symmetry, we can assume that $E=p_1^*D$, for some $D\in\Nef(W_1)$. Let us show that $D$ spans an extremal ray in $\Nef(W_1)$. Let $D = F + F'$ be any decomposition with
		$F, F' \in \Nef(W_1)$. Then $E = p_1^*D = p_1^*F + p_1^*F'$ with $p_1^*F, p_1^*F' \in \Nef(W)$, and thus by extremality, $p_1^*F$ and $p_1^*F'$ are proportional. Since $p_1^* : N^1(W_1)_{\RR} \to N^1(W)_{\RR}$ is injective, $F$ and $F'$ are proportional as well. This shows that $D$ spans an extremal ray.
		
		We thus know that every extremal ray of $\Nef(W)$ is obtained by pulling back an extremal ray of either $\Nef(W_1)$ or $\Nef(W_2)$. 
		
		Next assume that $D \in \Nef(W_1)$ is extremal, and let us prove that $p_1^*D$ is extremal in $\Nef(W)$. 
		Let $p_1^*D = E + E'$ be a decomposition with
		$E,E' \in \Nef(W)$. Up to adding terms to $E'$, we can assume that $E$ spans an extremal ray of $\Nef(W)$.
		By Theorem~\ref{thm-nefdec}, we can write
		$$E = p_1^*E_1 + p_2^*E_2, \ \text{ and } \ 
		E'= p_1^*E'_1 + p_2^*E'_2$$
		with $E_i,E'_i \in \Nef(W_i)$.
		As $E$ is extremal, the divisors $E$, $p_1^*E_1$ and $p_2^* E_2$ are proportional. Moreover
		$p_1^*(D-E_1-E_1')=p_2^*(E_2 + E'_2) \in \Nef(W)$.
		Hence, by the projection formula, $D-E_1-E_1'$ is nef. But $D$ is extremal in the cone $\Nef(W_1)$, so $D$, $E_1$, and $E_1'$ are proportional. In particular, $p_1^*D, p_1^*E_1, p_1^*E'_1$, and $p_2^*E_2$ are all proportional, which shows that $E$ and $E'$ are proportional, and thus concludes the proof.
	\end{proof}

	We now construct various fiber products showing that Theorem~\ref{thm-nefdec} fails in general over bases $B$ of dimension at least 2. Two types of constructions are provided: In Example \ref{ex-nondecomp}, the surjective maps $\phi_1$ and $\phi_2$ are birational morphisms; in Example \ref{ex-nondecompflat}, they are smooth fibrations. The first construction simply involves $(-1)$-curves on blow-ups of $\PP^2$; the second construction uses Serre's construction of vector bundles of rank two.
	
	\begin{example}\label{ex-nondecomp}
		
		Take $S:=\PP^2$, and take four points $P_1$, $P_2$, $P_3$, $P_4$ in $S$ so that no three of them lie on a line. Let $\ell_1$ be the line through $P_1$, $P_2$, and let $\ell_2$ be the line through $P_3$, $P_4$. 
		Take 
		
		$$
		W_1:=\mathrm{Bl}_{P_{1}, P_{2}}(S) \ \ 
		\text{ and } \ \ W_2:=\mathrm{Bl}_{P_{3}, P_{4}}(S).$$ 
		We let $$W:=W_1\times_{S} W_2.$$ As the blown-up points are distinct, $W$ is isomorphic to $\mathrm{Bl}_{P_1,P_2,P_3,P_4}(S)$, which is smooth. Moreover, the decomposition of the Picard group 
		$$\Pic(W)=p_1^*\Pic(W_1)+p_2^*\Pic(W_2)$$
		clearly holds.
		
		Denote by $\ell_1'$ and $\ell_2'$ the strict transforms of $\ell_1$ and $\ell_2$ in $W_1$ and $W_2$ respectively. Then $\ell_i'$ is an effective non-nef divisor on $W_i$ as $(\ell_i')^2=-1$. 
		Let $$D:=p_1^*\ell_1'+p_2^*\ell_2'.$$ 
		We show that $D$ is nef;
		this also shows that Lemma~\ref{lem-nef} fails when $\dim B \ge 2$.
		As $D$ is effective, it is enough to check that its intersections with its components are all non-negative. By symmetry, it is enough to compute $$D\cdot p_1^*\ell_1'=(\ell_1')^2+\ell_2'\cdot \phi_2^*\ell_1=-1+1=0.$$ 
		So $D$ is nef, and has vanishing intersection with the curves $p_1^*\ell_1'$ and $p_2^*\ell_2'$.
		
		Now assume by contradiction that $D$ has another decomposition $D=p_1^*D_1+p_2^*D_2$ with $D_i \in \Nef(W_i)$. Then 
		we have
		$$p_1^*(\ell'_1 - D_1)
		= p_2^*(D_2 - \ell'_2).$$
		As $p_1^*N^1(W_1)_{\RR} \cap p_2^*N^1(W_2)_{\RR}$ clearly has dimension one, it
		equals $\RR[p^*\OO_{\PP^2}(1)]$,
		where $p$ is the natural projection $W \to S$.
		Hence, for some $c\in\RR$, we have
		$$p_1^*(\ell'_1 - D_1)
		= p_2^*(D_2 - \ell'_2) = c p^*\OO_{\PP^2}(1).$$
		
		Since
		$$p_1^*D_1\cdot p_i^*\ell_i'+p_2^*D_2\cdot p_i^*\ell_i'=D\cdot p_i^*\ell'_i = 0,$$ 
		and both $p_1^*D_1$ and $p_2^*D_2$ are nef, 
		we have $p_i^*D_i\cdot p_i^*\ell_i'=0$.
		Thus
		$$-1 = p_1^*\ell_1' \cdot p_1^*(\ell'_1 - D_1) 
		= c p_1^*\ell_1' \cdot p^*\OO_{\PP^2}(1) = c$$
		and similarly, 
		$$1 = p_2^*\ell_2' \cdot p_2^*(D_2 - \ell'_2)
		= c p_2^*\ell_2' \cdot p^*\OO_{\PP^2}(1) = c,$$
		which is a contradiction.
	\end{example}
	
	\begin{example}\label{ex-nondecompflat}
		Take $S\cnec\PP^2$. Let us fix a closed subscheme $Z_2$ of $\PP^2$ consisting of two distinct (reduced) points. We fix another closed subscheme $Z_1$ of $\PP^2$ consisting of two distinct (reduced) points, chosen generally with respect to $Z_2$.
		
		For each $i = 1,2$, Serre's construction
		(see, e.g.,~\cite[Theorem 5.1.1]{HuybLehn}) 
		produces a locally free sheaf $E_i$ of rank $2$ on $\PP^2$, which fits into the short exact sequence
		\begin{equation}\label{suitex-Ei}
			0\to\OO_{\PP^2}\to E_i \to {\mathcal I}_{Z_i}(i)\to 0.  
		\end{equation}
		
		Set $W_i \cnec \PP(E_i)$; see \cite[Definition in p.162]{HarBook}.
		Consider 
		\[ \xymatrix@=1.5em{ & W \coloneqq W_1 \times_{S} W_2 \ar[dl]^{p_1} \ar[dd]^{p} \ar[dr]_{p_2}   \\
			W_1 = \PP(E_1) \ar[dr]_{\phi_1} & &     W_2 = \PP(E_2)\ . \ar[dl]^{\phi_2}   \\
			& S = \PP^2 } \]
		As a projectivized vector bundle, each $W_i$ is endowed with a tautological line bundle $\zeta_i$ satisfying
		${\phi_i}_*\zeta_i = E_i$. In particular, this line bundle has a distinguished section given by the inclusion morphism in (\ref{suitex-Ei}), whose zero locus we denote by $S_i$. We will describe the geometry of $S_i$ later.
		
		Note that the N\'eron--Severi space of $W$ decomposes. Indeed, the smooth fibration $p_1 : W \to W_1$ identifies with the projectivization of the vector bundle $\phi_1^*E_2$ over $W_1$, which has tautological line bundle $p_2^*\zeta_2$, so
		\begin{align*}
			N^1(W)_{\RR} &= p_1^*N^1(W_1)_{\RR}+ \RR \cdot p_2^*[\zeta_2]\\
			&= \RR\cdot p_1^*[\zeta_1] + p^*N^1(S)_{\RR} + \RR \cdot p_2^*[\zeta_2]\\
			&= p_1^*N^1(W_1)+p_2^*N^1(W_2).
		\end{align*}
		
		Define the line bundle $$D=p_1^*\zeta_1+p_2^*\zeta_2$$ 
		on $W$.
		It is effective, as the $\zeta_i$ both are. To prove that $D$ is nef, let us describe the geometry of the zero loci $S_i$.
		
		By \cite[Proposition 3.6.2]{EGAII}, 
		and since the closed subschemes $Z_i$ are locally complete intersections, each zero locus
		$S_i$ is in fact a (reduced irreducible) surface, isomorphic to $\mathrm{Bl}_{Z_i}\PP^2\simeq \PP({\mathcal I}_{Z_i}(i))$ naturally embedded in $\PP(E_i)$ through the surjection in (\ref{suitex-Ei}). Through this identification, the restricted line bundle ${\zeta_i}_{|S_i}$ corresponds to the tautological line bundle of $\PP({\mathcal I}_{Z_i}(i))$, which in $\mathrm{Bl}_{Z_i}\PP^2$ corresponds to the dual of the exceptional line bundle twisted by $\phi_i^{\ast}\OO_{\PP^2}(i)$.
		For $i=1$, this line bundle corresponds to the divisor obtained by strict transform of the line $\ell_1$ passing through the two points of $Z_1$ with the following properties:
		\begin{itemize}
			\item It is effective and has a unique section which is irreducible; 
			
			\item It has negative square.
		\end{itemize}
		For $i=2$, it is the strict transform of any conic through the two points of $Z_2$ with the following properties: 
		\begin{itemize}
			\item It is effective and admits an irreducible section; 
			
			\item It has positive square.
		\end{itemize} 
		
		Let us summarize: On one hand, $\zeta_1$ has exactly one negative curve $\ell_1'$ on $W_1$, which is contained in $S_1$ and has negative square there. On the other hand, $\zeta_2$ is nef on $W_2$. 
		
		We now prove that $D$ is nef, arguing by contradiction: Assume that there is a curve $C$ in $W$ such that $D\cdot C < 0$. We just proved that $p_2^*\zeta_2$ is nef, so $\zeta_1\cdot {p_1}_*C < 0$, and so there is a positive integer $m$ such that ${p_1}_*C = m\ell_1'$; moreover, $C$ must lie in $p_1^{-1}(\ell_1')$. The restricted map $\phi_1:\ell_1'\to\ell_1$ is an isomorphism, so its base change $p_2:p_1^{-1}(\ell_1')\to \phi_2^{-1}(\ell_1)\simeq\PP({E_2}_{|\ell_1})$ is an isomorphism too. Thus, ${p_2}_*C$ is a reduced curve $C_2$ in $W_2$, and ${\phi_2}_* C_2=m\ell_1$.
		By the projection formula, 
		$$D\cdot C=m\zeta_1\cdot\ell'_1 + \zeta_2\cdot C_2.$$
		We have ${\zeta_1}_{|S_1}=\ell_1'$, so $\zeta_1\cdot\ell'_1=-1$. Moreover, by \cite[Example 1 in \S5.2, Chapter 1]{OSS80}, and since we chose $Z_1$ generally with respect to $Z_2$, we have
		\begin{equation}\label{resE_2}
			{E_2}_{|\ell_1} \simeq \cO_{\PP^1}(1) \oplus \cO_{\PP^1}(1),
		\end{equation}
		so ${E_2}_{|\ell_1}\otimes\OO_{\ell_1}(-1)$ is nef, in particular
		$\zeta_2\cdot C_2\ge \phi_2^*\OO_{\ell_1}(1)\cdot C_2 = m$. Hence, we finally have $0>D\cdot C\ge -m+m = 0$, contradiction. So $D$ is nef.
		
		We conclude this example by picking a decomposition of $D$ as $p_1^*D_1+p_2^*D_2$ with $D_i\in N^1(W_i)_{\RR}$, and proving that at least one of the $D_i$ is not nef.
		Since the intersection of $p_1^* N^1(W_1)_{\RR}$ with $p_2^* N^1(W_2)_{\RR}$ is the subspace $\RR\cdot p^*[\OO_{\PP^2}(1)]$, and since we already have $D=p_1^*\zeta_1+p_2^*\zeta_2$, there exists $a\in\RR$ such that
		$$D_1 = \zeta_1 + a \phi_1^*\OO_{\PP^2}(1), \ \ 
		D_2 = \zeta_2 - a \phi_2^*\OO_{\PP^2}(1).$$
		In particular, $$D_1 \cdot \ell_1' = \zeta_1\cdot\ell_1' + a \OO_{\PP^2}(1)\cdot{\phi_1}_*\ell_1'= - 1 + a.$$
		Moreover, by \cite[Example 1 in \S5.2, Chapter 1]{OSS80} again, there exists a line $\ell_2$ in $\PP^2$ such that 
		\begin{equation}\label{resE_2_2}
			{E_2}_{|\ell_2} \simeq \cO_{\PP^1}\oplus \cO_{\PP^1}(2),
		\end{equation}
		and so there is a section $\ell_2'$ of the fibration $\phi_2:\PP({E_2}_{|\ell_2})\to\ell_2$ such that $\zeta_2\cdot\ell_2'=0$. In particular,
		$$D_2 \cdot \ell_2' =
		\zeta_2\cdot\ell_2' - a \ell_2\cdot{\phi_2}_*\ell_2' = - a.
		$$
		Since at least one of the two numbers $a-1$ and $-a$ is negative, $D_1$ and $D_2$ cannot both be nef.
	\end{example}
	
	We now use Examples~\ref{ex-nondecomp} and \ref{ex-nondecompflat} to build similar counter-examples over bases of higher dimension.
	
	\begin{example}\label{ex-nondecompbis}

		Take $W$, $W_1$, $W_2$ and $S$ as in Example \ref{ex-nondecomp} or Example \ref{ex-nondecompflat}. 
		Note that they all are rationally connected: It is clear in Example \ref{ex-nondecomp}, and follows from \cite[Corollary 1.3]{GHS03} in Example  \ref{ex-nondecompflat}.
		Introduce
		$$W \times T = (W_1 \times T) \times_{(S \times T)} (W_2 \times T)$$
		where $T$ is an arbitrary smooth projective variety. 
		Since $W$, $W_1$ and $W_2$ are rationally connected and smooth, they have trivial irregularity, so that
		$$N^1(Z \times T)_{\RR} =  p_Z^*N^1(Z)_{\RR}\oplus p_T^*N^1(T)_{\RR},$$
		for $Z=W$, $W_1$ or $W_2$. 
		This implies that
		$$N^1(W \times T)_{\RR} = 
		(p_1 \times \id_T)^*N^1(W_1 \times T)_{\RR} +
		(p_2 \times \id_T)^*N^1(W_2 \times T)_{\RR}.$$
		Note that by the projection formula, 
		$$\Nef(Z \times T) =  p_Z^*\Nef(Z)\oplus p_T^*\Nef(T),$$
		for $Z=W$, $W_1$ or $W_2$. So, if we assume by contradiction that 
		$$\Nef(W\times T)=(p_1\times\mathrm{id}_T)^*\Nef(W_1\times T)+(p_2\times\mathrm{id}_T)^*\Nef(W_2\times T),$$
		we get $\Nef(W)=p_1^*\Nef(W_1)+p_2^*\Nef(W_2)$, which contradicts Example~\ref{ex-nondecomp} or  Example~\ref{ex-nondecompflat}.
	\end{example}

	\begin{remark}\label{rem-exMov} 
		We note that 
		Theorem~\ref{thm-nefdec} 
		also fails if the nef cones are replaced by 
		the movable cones. 
		In general, let $X$ be a smooth projective variety and recall that a divisor $D$ on $X$ is called \textit{movable}, if there is a positive integer $m$ such that $mD$ is effective and the base locus of the linear system $|\cO_X(mD)|$ has no component of codimension 1. The closed movable cone $\Mov(X)$
		is then defined as the closure of the convex cone in $N^1(X)_{\RR}$ generated by the classes
		of movable divisors. It always holds $\Nef(X) \subset \Mov(X)$, and, if moreover $X$ is a surface, then
		$\Nef(X) = \Mov(X)$.
		
		Take a general fiber product $W = W_1 \times_{\PP^1} W_2$ of two very general rational elliptic surfaces $W_1 \to \PP^1$ and $W_2 \to \PP^1$ with sections.
		Then $W$ is a strict Calabi--Yau threefold and has non-trivial algebraic flops (see~\cite{Na91}). Thus, $\Nef(W) \subsetneq \Mov(W)$.
		But since the $W_i$ are surfaces, we have 
		$$p_1^*\Mov(W_1) + p_2^*\Mov(W_2) = p_1^*\Nef(W_1) + p_2^*\Nef(W_2) =\Nef(W) \subsetneq \Mov(W),$$
		where the second equality follows from Theorem \ref{thm-nefdec}.
		
		Even in this particular case, the version of the Cone Conjecture stated in \cite[Conjecture 2.1.(2)]{To10} is not known.
	\end{remark}
	
	We conclude this section with a corollary of Theorem~\ref{thm-nefdec} that will be key in the proof of Theorem \ref{main}.
	
	For a morphism $\pi:X\to Y$, we define $$\Aut(X/Y)=\{g\in\Aut(X)\mid \pi\circ g = \pi\}.$$
	
	\begin{cor}\label{cor-H1H2} For $i=1,2$, let $\phi_i : W_i \to B$ be a surjective morphism from a projective variety to a projective curve $B$; let $H_i$ be a subgroup of $ \Aut(W_i/B)$. Assume that
		\begin{enumerate}
			\item  The fiber product $W=W_1 \times_{B} W_2$ is irreducible;
			\item  It holds $$p_1^*N^1(W_1)_{\RR}+p_2^*N^1(W_2)_{\RR}=N^1(W)_{\RR},$$ where $p_i$ denotes the projection from $W$ onto $W_i$;
			\item For each $i=1,2$, there exists a rational polyhedral cone $\Pi_i$ in $\Nefp(W_i)$ such that $\Amp(W_i)\subset H_i\cdot \Pi_i$.
		\end{enumerate}
		Then, for any subgroup $H$ of $\Aut(W)$ containing $H_1\times H_2$, there is a rational polyhedral fundamental domain for the $H$-action on $\Nefp(W)$.
	\end{cor}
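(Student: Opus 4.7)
The plan is to combine the nef cone decomposition of Theorem~\ref{thm-nefdec} with Proposition~\ref{pro-looij} applied to the group $H$. First, I note that since each $H_i$ is a subgroup of $\Aut(W_i/B)$, the elements of $H_i$ preserve the fibers of $\phi_i$; hence $H_1 \times H_2$ acts on $W = W_1 \times_B W_2$ by $(h_1,h_2)\cdot(w_1,w_2) = (h_1 w_1, h_2 w_2)$, and this action satisfies $p_i \circ (h_1,h_2) = h_i \circ p_i$, so that $(h_1,h_2)^* \circ p_i^* = p_i^* \circ h_i^*$ on $N^1(W_i)_\RR$.

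Next, I would introduce the candidate cone
\[ \Pi \cnec p_1^*\Pi_1 + p_2^*\Pi_2 \subset N^1(W)_\RR. \]
Since each $\Pi_i$ is rational polyhedral and contained in $\Nefp(W_i)$, the cone $\Pi$ is rational polyhedral and, by pulling back rational nef classes, contained in $\Nefp(W)$. The key step is then to show that $\Amp(W) \subset H\cdot \Pi$. Given $D \in \Amp(W)$, Theorem~\ref{thm-nefdec} provides a decomposition $D = p_1^* D_1 + p_2^* D_2$ with $D_i \in \Amp(W_i)$ (using the ample-cone version). By assumption (3), there exist $h_i \in H_i$ and $E_i \in \Pi_i$ such that $D_i = h_i^* E_i$. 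Setting $h \cnec (h_1,h_2) \in H_1 \times H_2 \subset H$, the compatibility above yields
\[ D = p_1^* h_1^* E_1 + p_2^* h_2^* E_2 = h^*(p_1^* E_1 + p_2^* E_2) \in H\cdot \Pi. \]
Hence $\Amp(W) \subset H\cdot \Pi$.

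Finally, Proposition~\ref{pro-looij}.(1) applied to $X = W$ and to the subgroup $H \le \Aut(W)$ immediately yields a rational polyhedral fundamental domain for the $H$-action on $\Nefp(W)$. I do not expect any genuine obstacle: the only point requiring care is the compatibility of the $H_1 \times H_2$-action with the pullbacks $p_i^*$, which follows at once from $h_i \in \Aut(W_i/B)$; everything else is a direct combination of the two main tools already established in the excerpt.
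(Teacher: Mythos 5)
Your proof is correct and follows essentially the same route as the paper's: define $\Pi$ as the (convex cone) $p_1^*\Pi_1 + p_2^*\Pi_2$, apply the ample-cone decomposition from Theorem~\ref{thm-nefdec} together with assumption~(3) to get $\Amp(W)\subset (H_1\times H_2)\cdot\Pi\subset H\cdot\Pi$, and then invoke Proposition~\ref{pro-looij}.(1). The only difference is that you spell out the compatibility $(h_1,h_2)^*\circ p_i^* = p_i^*\circ h_i^*$, which the paper leaves implicit.
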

	
	\begin{proof}
		Let $\Pi$ be the convex hull of $p_1^*\Pi_1 + p_2^*\Pi_2$. Then $\Pi$ is a rational polyhedral cone contained in $\Nef^+(W)$. Moreover, 
		\[ \Amp(W) \subset(H_1 \times H_2) \cdot \Pi \subset H \cdot \Pi
		\]
		as $p_1^*\Amp(W_1) + p_2^*\Amp(W_2)=\Amp(W)$ by Theorem~\ref{thm-nefdec}. The existence of a rational polyhedral fundamental domain then follows from Proposition~\ref{pro-looij}.(1).
	\end{proof}

	\section{Construction of Schoen varieties}\label{cons}
	
	Schoen varieties are constructed as fiber products
	of two fibrations over $\PP^1$.
	Let us first construct these fibrations.
	
	\subsection{The factor $W$ with a fibration over $\PP^1$}\label{subsec-Wconstr} 
	
	This construction relies on a pencil of ample hypersurfaces in a Fano manifold.
	
	Let $Z$ be a Fano manifold of dimension at least $2$, and let $D$ be an ample divisor in $Z$ such that
	both $\cO_Z(D)$ and $\cO_Z(-K_Z - D)$ are globally generated.
	Note that
	$\cO_Z(-K_Z)$ is then globally generated as well.
	
	\begin{example}
		Take any toric Fano manifold $Z$ of dimension at least $2$. 
		Since nef line bundles on a projective toric manifold are globally generated,
		any decomposition $-K_Z = D + D'$ as the sum of 
		an ample divisor $D$ and a nef divisor $D'$ yields a pair $(Z,D)$ satisfying the above condition.
	\end{example}

	Let $W \subset \PP^1\times Z$ be a general member of 
	the ample and basepoint-free linear system $|\OO_{\PP^1}(1)\boxtimes\OO_{Z}(D)|$. 
	We have a fibration $\phi : W \to \PP^1$ via the first projection, and the second projection $\varepsilon : W \to Z$ 
	is the blow-up of $Z$ 
	along the smooth subvariety $Y$ of codimension two cut out by the members of the pencil in $|D|$ defined by $W$.
	Since $Z$ is Fano, $W$ is rationally connected.
	By construction, any point $y\in Y$ defines a rational curve $\gep^{-1}(y)$ which is a section of $\phi : W \to \PP^1$.

	By the adjunction formula, 
	\begin{equation}\label{acd} 
		\OO_W(-K_{W}) = \left(\OO_{\PP^1}(1)\boxtimes \OO_{Z}(-K_Z - D)\right)|_{W},
	\end{equation}
	so $\OO_W(-K_W)$
	is globally generated, {\it a fortiori} nef and effective.

	If $Z$ is chosen to be a del Pezzo surface, then the surface $W$ is described by the following lemma. Recall that a smooth projective surface $S$ is called \textit{weak del Pezzo} if its anticanonical divisor $-K_S$ is nef and big. 
	
	\begin{lemma}\label{lem-dimZ2} If $Z$ has dimension 2, then either $D \in |-K_Z|$ and $W \xrightarrow{\phi} \PP^1$ is a rational elliptic surface with globally generated anticanonical line bundle, or $W$ is a weak del Pezzo surface. 
	\end{lemma}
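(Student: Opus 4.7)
The plan is to use the formula~\eqref{acd} for $-K_W$, and split into two cases according to whether the divisor class $M := -K_Z - D$ is zero in $\Pic(Z)$ or not. Since $Z$ is a del Pezzo surface, it is rational, hence $\Pic(Z)$ is torsion-free, and $M = 0$ as a class if and only if $D \in |-K_Z|$.

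In the first case, where $D \in |-K_Z|$, formula~\eqref{acd} gives $-K_W = \phi^*\OO_{\PP^1}(1)$, which is certainly globally generated. The fibers of $\phi$ are the members of the pencil $|D| \subset |-K_Z|$; by adjunction on $Z$, these have arithmetic genus $p_a = 1 + \frac{1}{2} D \cdot (D + K_Z) = 1$, and a general member is smooth, so $\phi : W \to \PP^1$ is an elliptic fibration. The variety $W$ is rational, being birational to the del Pezzo surface $Z$ via $\varepsilon$. Hence $W \to \PP^1$ is a rational elliptic surface with globally generated anticanonical line bundle, as claimed.

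In the second case, where $M \neq 0$ in $\Pic(Z)$, one needs to show that $W$ is weak del Pezzo, i.e., that $-K_W$ is nef and big. Nefness is immediate from~\eqref{acd}, since $-K_W$ is globally generated. For bigness, it is enough to show $(-K_W)^2 > 0$. Writing $-K_W = H + L$ with $H = \phi^*\OO_{\PP^1}(1)|_W$ a fiber of $\phi$ and $L = \varepsilon^*M$, one computes $H^2 = 0$, $H \cdot L = D \cdot M$, and $L^2 = M^2$ (the last because $M$ has trivial intersection with the exceptional divisors of $\varepsilon$, which are collapsed by $\varepsilon$). Combining this with $(-K_Z)^2 = (M+D)^2$ yields
\[ (-K_W)^2 = M^2 + 2 M \cdot D = K_Z^2 - D^2. \]
Both terms on the left are nonnegative: $M^2 \geq 0$ because $M$ is globally generated, hence nef; and $M \cdot D > 0$ strictly, because $M$ is a nonzero effective divisor ($M \neq 0$ and globally generated) and $D$ is ample. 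Thus $(-K_W)^2 > 0$, and $W$ is weak del Pezzo.

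No step looks particularly subtle; the only mild care required is to distinguish linear from numerical equivalence when passing between the two cases, which is unproblematic here thanks to $\Pic(Z)$ being torsion-free.
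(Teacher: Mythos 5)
Your proof is correct and takes essentially the same approach as the paper: both split on whether $D \in |-K_Z|$, dispose of the elliptic case via $-K_W = \phi^*\OO_{\PP^1}(1)$, and in the remaining case show $(-K_W)^2 = K_Z^2 - D^2 > 0$. The only cosmetic difference is in that last computation — you expand $(-K_W)^2 = M^2 + 2M\cdot D$ directly on $W$ with $M := -K_Z - D$ nef and $M \cdot D > 0$, whereas the paper identifies $W$ as the blowup of $Z$ at $D^2$ points to get $K_W^2 = K_Z^2 - D^2$ and then uses the chain $K_Z^2 > -K_Z\cdot D > D^2$, which amounts to the same inequality.
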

	
	\begin{proof} 
		Since $W$ is rationally connected and $\dim W = 2$, we know that $W$ is rational.
		If $D \in |-K_Z|$,
		then $\cO_W(-K_{W}) = \phi^*\cO_{\PP^1}(1)$, which is globally generated, and which makes
		$W$ into a rational elliptic surface.
		
		Suppose now that $D \notin |-K_Z|$.
		As $-K_Z - D$ is effective and non-trivial, and as $-K_Z$ and $D$ are ample, we have
		$-K_Z(-K_Z - D) > 0$ and $D(-K_Z - D) > 0$, and thus, 
		$$K_Z^2 > -K_Z\cdot D > D^2. $$
		As $W$ is the blowup of $Z$ at $(D^2)$ points,
		we have $K_W^2 = K_Z^2 - D^2 > 0$.
		Since $-K_W$ is nef, 
		$W$ is 
		a weak del Pezzo surface. 
	\end{proof}
	
	\begin{remark}\label{Wgeneral}
		Note that, in the case where $W$ is a rational elliptic surface, the fact that it has a section and that it is chosen general in its pencil on $\PP^1\times Z$ implies that it is isomorphic to $\PP^2$ blown-up in the base locus of a general pencil of cubics. In particular, $W$ has topological Euler characteristics $12$, the canonical fibration $W\to\PP^1$ has some singular fibers, but no multiple fibers.
		The fact that the rational elliptic surface $W$ is general implies that the singular fibers of $W\to\PP^1$ are exactly 12 nodal rational curves (\cite[p.8]{Mi89}).
		
		Considering the $j$-invariant in family for the fibration $W\to\PP^1$, we obtain a proper surjective map $j:\PP^1\to\PP^1$ which is finite of degree 12, and has 12 simple poles which occur at the 12 image points of the 12 singular fibers (\cite[Lemma (IV.4.1), Corollary (IV.4.2)]{Mi89}).
	\end{remark}
	
	In general, the construction of $W$ described above ensures the following properties.
	
	\begin{proposition}\label{prop-lef} 
		We have
		$$\Nefe(W) = \Nefp(W) = \Nef(W).$$
		Moreover, if $\dim W \geq 3$, or if
		$W$ is a weak del Pezzo surface, then the cone $\Nef(W)$ is rational polyhedral, spanned by classes of semiample divisors.
	\end{proposition}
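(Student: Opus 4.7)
The proof uses that $-K_W$ is globally generated (by adjunction~\eqref{acd} and the hypothesis on $Z$ and $D$), together with the two structural maps of $W$: the blow-up $\varepsilon : W \to Z$ along the codimension-two subvariety $Y$, and the fibration $\phi : W \to \PP^1$ whose smooth fibers $\widetilde{Q}_t$ are general members of $|D|$ in $Z$. I would argue by cases, following Lemma~\ref{lem-dimZ2} and the hypothesis on $\dim W$.

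When $W$ is a weak del Pezzo surface, $-K_W$ is nef and big; the Kawamata--Shokurov base-point-free theorem gives that every nef $\QQ$-Cartier divisor on $W$ is semiample, and the classical description of weak del Pezzos (nef cone dual to a Mori cone generated by the finitely many $(-1)$- and $(-2)$-curves) yields rational polyhedrality. When $\dim W \geq 3$, the main step is the decomposition
\[ \Nef(W) = \varepsilon^* \Nef(Z) + \RR_{\geq 0}\, [\phi^* \OO_{\PP^1}(1)]. \]
The inclusion $\supset$ is automatic. For the reverse, I would combine (a) the computation on a fiber $\sigma$ of the exceptional $\PP^1$-bundle $E \to Y$, which satisfies $\sigma \cdot \phi^* \OO_{\PP^1}(1) = 1$ and $\sigma \cdot \varepsilon^* L = 0$ for every $L \in N^1(Z)_{\RR}$, forcing the $\phi^*\OO_{\PP^1}(1)$-coefficient of any nef class to be non-negative, with (b) the analysis of $D|_{\widetilde{Q}_t}$ (via Grothendieck--Lefschetz when $\dim \widetilde{Q}_t \geq 3$, or direct surface analysis when $\dim \widetilde{Q}_t = 2$) to identify $\Nef(\widetilde{Q}_t)$ as a pullback from $\Nef(Z)$. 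Since $\Nef(Z)$ is rational polyhedral and spanned by semiample classes ($Z$ is Fano), so is $\Nef(W)$. Finally, when $W$ is a rational elliptic surface with $-K_W = F$, I would apply Riemann--Roch: for any nef $\QQ$-divisor $D$, $\chi(D) = 1 + (D^2 + D \cdot F)/2 \geq 1$, while $h^2(D) = h^0(-F - D) = 0$ since $-F - D$ has negative intersection with every ample class, so $D$ is effective. A Hodge-index argument constrains nef classes with $D \cdot F = 0$ to lie in $\RR_{\geq 0}[F]$, which together with Carath\'eodory in the finite-dimensional cone $\Nef(W)$ extends effectivity to $\RR$-divisors.

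In every case, the preceding analysis puts enough rational semiample (hence effective) classes inside $\Nef(W)$ to give the equalities $\Nefe(W) = \Nefp(W) = \Nef(W)$ by expressing any real nef class as a non-negative real combination of rational semiample generators. The main obstacle is the description of $\Nef(W)$ in the higher-dimensional case: enumerating all relevant types of irreducible curves in $W$ (fibers of $E \to Y$, strict transforms from $Z$ of curves meeting or avoiding $Y$, and curves inside smooth fibers of $\phi$) and checking the nef condition against each requires careful bookkeeping, and the identification of $\Nef(\widetilde{Q}_t)$ when $\dim \widetilde{Q}_t = 2$ demands a separate case-by-case argument that bypasses the Lefschetz theorem.
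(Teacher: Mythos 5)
Your proposal diverges from the paper's argument in the higher-dimensional case, and the divergence introduces a real gap that also surfaces as a dimension issue. The paper establishes the ``moreover'' part for $\dim W \geq 3$ by applying a Koll\'ar-type restriction theorem (\cite[Proposition 3.5]{BI09}, generalizing \cite[Appendix]{Ko91}) to the ambient inclusion $W \hookrightarrow \PP^1 \times Z$; since the ambient Fano has dimension $\dim W + 1 \geq 4$, the theorem applies uniformly and gives $\Nef(W) \simeq \Nef(\PP^1 \times Z)$, which is rational polyhedral and spanned by semiample classes. You instead aim at the decomposition $\Nef(W) = \varepsilon^*\Nef(Z) + \RR_{\geq 0}[\phi^*\OO_{\PP^1}(1)]$, which is indeed the image under restriction of the paper's decomposition, and you try to establish it by restricting to a smooth fiber $\widetilde{Q}_t \simeq Q_t \subset Z$.

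The gap is in step (b). You invoke Grothendieck--Lefschetz to ``identify $\Nef(\widetilde{Q}_t)$ as a pullback from $\Nef(Z)$,'' but Grothendieck--Lefschetz only gives an isomorphism of Picard (hence N\'eron--Severi) groups. Passing from $L'|_{Q_t}$ nef to $L'$ nef on $Z$ requires the additional content of the Koll\'ar/BI09 theorem, applied now to $Q_t \subset Z$. That theorem needs the ambient to have dimension at least $4$, so your version only covers $\dim Z \geq 4$, i.e.\ $\dim W \geq 4$. For $\dim W = 3$ you acknowledge needing a ``separate case-by-case argument'' but leave it open; the paper avoids this entirely by choosing $\PP^1 \times Z$ rather than $Z$ as the ambient, making the dimension hypothesis automatic whenever $\dim W \geq 3$. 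There is also a smaller unexamined step: after writing $D = \varepsilon^*L' + a\,\phi^*\OO_{\PP^1}(1)$ and showing $a \geq 0$, you still need $\varepsilon^*L'$ nef rather than just $L'|_{Q_t}$ nef; this is exactly what the Koll\'ar-type statement provides and what your Lefschetz citation does not.

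For the rational elliptic surface case, your Riemann--Roch argument ($\chi(D) \geq 1$ for nef rational $D$ since $D^2 \geq 0$ and $D \cdot F \geq 0$; $h^2 = 0$ because $-F - D$ is anti-nef and non-zero) correctly gives $\Nef(W) \cap N^1(W)_{\QQ} \subset \Eff(W)$, which is the content of the paper's citation of \cite[Lemma 4.2]{To10}. However, the final step from $\Nefp(W)$ to all of $\Nef(W)$ (your ``Hodge index + Carath\'eodory'' remark) is left vague; the paper handles it by citing \cite[Corollary 3.3.(c)]{Nikulin}, which says the Mori cone of a rational elliptic surface is generated by classes of curves, so dually $\Nef(W)$ is spanned by Cartier classes. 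You would need to flesh out that step to a full argument. The weak del Pezzo case in your proposal matches the paper's in substance.
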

	
	\begin{proof}
		
		We start with the ``moreover'' part. It is a corollary of some known results.
		If $W$ is a weak del Pezzo surface,
		then $W$ is log Fano (see e.g.~\cite[Proposition 2.6]{Mass}). 
		Hence by the Cone Theorem \cite[Theorem 3.7]{KM98}, its nef cone is a rational polyhedral cone spanned by classes of semiample divisors.
		Assume that $\dim W \geq 3$.
		Since $\PP^1 \times Z$ is a smooth Fano variety of dimension at least four,
		and since $W \subset \PP^1 \times Z$ is a smooth ample divisor such that
		$$\cO_{\PP^1 \times Z}(-K_{\PP^1 \times Z} - W) = \cO_Z(-K_Z - D) \boxtimes \cO_{\PP^1}(1)$$ 
		is nef, we can apply~\cite[Proposition 3.5]{BI09} (which generalizes~\cite[Appendix]{Ko91}). It yields an isomorphism
		$$j_*: \NE(W) \eto \NE(\PP^1 \times Z)$$
		induced by the inclusion $j : W \hto \PP^1 \times Z$.
		Dually, we obtain an isomorphism
		$$j^*: \Nef(\PP^1 \times Z) \eto \Nef(W).$$
		As $\Nef(\PP^1 \times Z)$ is rational polyhedral and spanned by classes of semiample divisors, so is $\Nef(W)$.
		
		We now prove the equality of the three cones $\Nefe(W)$, $\Nefp(W)$, and $\Nef(W)$. If $\dim W\ge 3$, or if $W$ is a weak del Pezzo surface, the equality clearly follows from the fact that $\Nef(W)$ 
		is rational polyhedral, spanned by classes of semiample divisors. So by Lemma~\ref{lem-dimZ2}, we can focus on the case where $W$ is a rational elliptic surface. 
		
		Clearly, $\Nefe(W)$ and $\Nefp(W)$ are subcones of $\Nef(W)$. Moreover, $\Nefp(W)\subset\Nefe(W)$ by~\cite[Lemma 4.2]{To10}. We only need to show that
		$\Nef(W) = \Nefp(W)$. By~\cite[Corollary 3.3.(c)]{Nikulin}, the cone $\NE(W)$ is generated by curve classes, so dually, $\Nef(W)$ is spanned by Cartier divisors. So $\Nef(W) = \Nefp(W)$ indeed.
	\end{proof} 
	
	Let us conclude the description of $W$ by describing the general fiber of $\phi : W\to\PP^1$, under the assumption that $D \in |-K_Z|$.
	
	\begin{lemma}\label{lem-CYfiber}
		Suppose that $D \in |-K_Z|$.
		Then the general fiber $F$ of $\phi:W\to\PP^1$ is a Calabi--Yau manifold (as in Definition \ref{def-cymfd}).
	\end{lemma}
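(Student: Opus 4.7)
The plan is to identify the general fiber $F$ as a smooth anticanonical hypersurface in the Fano manifold $Z$, and then verify each property in Definition~\ref{def-cymfd} separately.

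First I would unwind the construction. Since $W\subset\PP^1\times Z$ is cut out by a general section of $\OO_{\PP^1}(1)\boxtimes\OO_Z(D)$ and $\phi:W\to\PP^1$ is induced by the first projection, for every $t\in\PP^1$ the scheme-theoretic fiber $F=\phi^{-1}(t)$ is the divisor in $\{t\}\times Z\simeq Z$ cut out by the corresponding section of $\OO_Z(D)$. As $t$ varies this traces out the pencil in $|D|$ whose base locus is $Y$, and hence for general $t$, the hypersurface $F\subset Z$ is a general member of $|D|$. Since $|D|=|-K_Z|$ is basepoint-free and $Z$ is smooth, Bertini's theorem gives that $F$ is smooth for general $t$. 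In particular $F$ is connected when $\dim F\ge 1$ by Bertini on connectedness together with the fact that $\OO_Z(D)$ is ample.

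Next, since $F\in |-K_Z|$, the adjunction formula gives $K_F=(K_Z+F)|_F\sim 0$, so the canonical bundle of $F$ is trivial.

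It remains to verify the vanishing $h^i(F,\OO_F)=0$ for $0<i<\dim F=\dim Z-1$. I would use the short exact sequence
\[ 0\to \OO_Z(-F)\to \OO_Z \to \OO_F\to 0\]
together with the identification $\OO_Z(-F)\simeq \OO_Z(K_Z)$. Since $Z$ is Fano, Kodaira vanishing yields $H^i(Z,\OO_Z)=0$ for all $i>0$ and $H^j(Z,\OO_Z(K_Z))=0$ for all $j<\dim Z$. The long exact sequence in cohomology then gives, for every $i$ with $0<i<\dim Z-1$,
\[ 0=H^i(Z,\OO_Z)\to H^i(F,\OO_F)\to H^{i+1}(Z,\OO_Z(K_Z))=0,\]
where the last vanishing uses $i+1\le \dim Z-1<\dim Z$. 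Hence $H^i(F,\OO_F)=0$ in that range, as required.

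There is no real obstacle here: the only thing to double-check is that when $\dim Z=2$ (so $\dim F=1$) the statement reduces correctly to $F$ being a smooth elliptic curve, which matches Lemma~\ref{lem-dimZ2} and requires no cohomological vanishing in the open range $0<i<1$. For $\dim Z\ge 3$ the argument above applies verbatim.
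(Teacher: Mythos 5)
Your proof is correct. It differs from the paper's argument in the choice of ambient variety: you identify the general fiber $F$ directly as a smooth member of the anticanonical pencil in the Fano manifold $Z$, and run the ideal-sheaf exact sequence $0\to\OO_Z(K_Z)\to\OO_Z\to\OO_F\to 0$ on $Z$, using Kodaira vanishing (plus, implicitly, Serre duality for the vanishing of $H^j(Z,K_Z)$ with $j<\dim Z$) on the Fano $Z$. The paper instead observes from equation (4.1) that $F\in|{-K_W}|$ and runs the analogous exact sequence on $W$, using rational connectedness of $W$ together with Serre duality to get the needed vanishings. The two routes are parallel: each exploits the $\Gamma$-trivial and top-degree cohomology vanishing for a structure sheaf of a rationally connected / Fano manifold, one step up in the tower $F\subset W\subset\PP^1\times Z$ versus $F\subset Z$. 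Your version is arguably slightly more self-contained since Fano vanishing is classical, while the paper's version exploits a fact ($W$ rationally connected) already established in the construction. One small imprecision: the second vanishing you invoke, $H^j(Z,\OO_Z(K_Z))=0$ for $j<\dim Z$, is not literally Kodaira vanishing but follows from $H^{\dim Z - j}(Z,\OO_Z)=0$ by Serre duality; this does not affect the correctness.
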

	
	\begin{proof} 
		Since $D \in |-K_Z|$, the general fiber $F$ is linearly equivalent to the anticanonical divisor $-K_W$ by~\eqref{acd}.
		By adjunction, $F$ has trivial canonical bundle. We also have an exact sequence
		$$0\to \OO_W(-K_W) \to\OO_W\to \OO_F \to 0.$$
		Since $W$ is rationally connected,
		we have 
		$$h^{\dim W - i}(W,-K_W) = h^i(W,\OO_W)=0$$ 
		for $i \ge 1$.
		Hence $h^i(F,\OO_F)=0$ whenever $1 \le i \le \dim W - 2 = \dim F - 1 $.
	\end{proof}
	
	\subsection{The fiber product $X=W_1{\times}_{\PP^1} W_2$}\label{subsec-Xconstr}
	
	We are ready to generalize Schoen's construction and obtain Calabi--Yau pairs in arbitrary dimension. 
	For $i = 1, 2$, let $Z_i, D_i, W_i$ be as in \S\ref{subsec-Wconstr}. We denote by $\phi_i:W_i\to \PP^1$ the associated fibration, and recall that it has a section.
	
	We add one assumption, which is automatically satisfied by taking the fibrations $\phi_i$ for $i=1,2$ to be general with respect to one another: 
	
	\medskip 
	
	\begin{center}
		\textit{For every $t\in\PP^1$, the fiber of at least one of the $\phi_i$ above $t$ is smooth.} 
	\end{center}
	
	\medskip \noindent In the case where both $W_i$ are rational elliptic surfaces, this assumption has an important consequence.
	
	\begin{lemma}\label{nonisogenous}
		Let $W_1$ and $W_2$ be general rational elliptic surfaces, with their canonical fibrations $\phi_i:W_i\to\PP^1$, each admitting a section. Assume that for every $t\in\PP^1$, there is $i$ such that the fiber $\phi_i^{-1}(t)$ is smooth.
		Then, for a very general point $t\in\PP^1$, the fibers $\phi_1^{-1}(t)$ and $\phi_2^{-1}(t)$ are smooth, non-isogenous elliptic curves.
	\end{lemma}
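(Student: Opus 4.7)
The plan is to verify smoothness by a finiteness argument, then rule out isogeny of the two fibers at very general $t$ by invoking modular curves inside $\PP^1 \times \PP^1$.

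For smoothness, Remark~\ref{Wgeneral} ensures that each fibration $\phi_i : W_i \to \PP^1$ has exactly 12 singular fibers, so the locus of $t \in \PP^1$ where both $\phi_1^{-1}(t)$ and $\phi_2^{-1}(t)$ are smooth is the complement of a finite subset of $\PP^1$, which certainly contains every very general $t$.

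For the non-isogeny claim, I would let $j_i : \PP^1 \to \PP^1$ be the $j$-invariant map of $\phi_i$, which by Remark~\ref{Wgeneral} is a finite morphism of degree $12$, and consider the product morphism $j := (j_1, j_2) : \PP^1 \to \PP^1 \times \PP^1$; since each $j_i$ is non-constant, the image $C := j(\PP^1)$ is an irreducible projective curve. For each positive integer $N$, I would denote by $T_N \subset \PP^1 \times \PP^1$ the Zariski-closure of the image of the affine modular curve $Y_0(N)$ via the natural morphism $(E, E') \mapsto (j(E), j(E'))$ into the product of two $j$-lines. Each $T_N$ is an irreducible closed curve, and two smooth elliptic curves over $\CC$ are isogenous if and only if the pair of their $j$-invariants lies in $\bigcup_{N \geq 1} T_N$. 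Now, suppose for contradiction that the fibers $\phi_1^{-1}(t)$ and $\phi_2^{-1}(t)$ are isogenous for very general $t \in \PP^1$. Then $C$ differs from $\bigcup_N (C \cap T_N)$ by at most a countable set. Since each $C \cap T_N$ is Zariski-closed in $C$ and $C(\CC)$ is uncountable, one of the intersections $C \cap T_N$ must be all of $C$; by irreducibility, $C = T_N$ for some $N$. Since $C$ is dominated by $\PP^1$, the curve $T_N$ must be rational, which forces $X_0(N)$ to have genus zero, restricting $N$ to a finite set of integers.

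The main obstacle is then to show that for each such finitely many $N$, the locus of pairs $(W_1, W_2)$ with $(j_1, j_2)(\PP^1) = T_N$ is a proper closed subset of the parameter space of pairs of general rational elliptic surfaces. The key fact is that the first projection $T_N \to \PP^1$ is a finite morphism of a determined degree; so with $j_1$ fixed, the constraint $(j_1(t), j_2(t)) \in T_N$ for all $t \in \PP^1$ forces $j_2$ to factor through one of finitely many finite covers of $\PP^1$ obtained from pulling back $T_N \to \PP^1$ along $j_1$, leaving only finitely many possibilities for the map $j_2$ (hence for $W_2$). So for fixed general $W_1$, only finitely many $W_2$ in the parameter space produce generically isogenous fibers. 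Taking the finite union over the relevant $N$'s, a general pair $(W_1, W_2)$ lies in the complement of this proper closed bad locus, which concludes the proof.
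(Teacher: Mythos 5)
Your overall strategy — pass to the $j$-maps, view isogeny as lying on modular correspondences in $\PP^1 \times \PP^1$, and show that the irreducible curve $C = (j_1, j_2)(\PP^1)$ cannot coincide with any of them — matches the paper's opening move exactly. But the way you rule out $C = T_N$ is genuinely different, and this is where your argument develops a gap, while the paper's route both avoids the gap and is considerably shorter.

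Your route goes: reduce to $N$ with $X_0(N)$ of genus $0$ (a finite set), then, for each such $N$, argue that ``the locus of pairs $(W_1, W_2)$ with $(j_1, j_2)(\PP^1) = T_N$ is a proper closed subset of the parameter space.'' The genus-$0$ reduction is fine (for $N > 1$ the modular polynomial $\Phi_N$ has degree $\psi(N)$ in each variable, so $X_0(N) \to T_N$ is birational), but it is an unnecessary detour. The real gap is the last step: you conclude ``only finitely many possibilities for the map $j_2$ (hence for $W_2$),'' but the $j$-map does not determine $W_2$ (there are quadratic twists), so ``finitely many $W_2$'' is false. What you can say is that the bad $W_2$'s form a proper closed subset of the parameter space, but making this precise requires identifying the parameter space, verifying that $W_2 \mapsto j_2$ is a nonconstant algebraic map, and then checking that the word ``general'' in the lemma refers to the same parameter space. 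None of that is in your proposal. Moreover, and more tellingly, your argument never uses the stated hypothesis that for every $t$ at least one of $\phi_1^{-1}(t)$, $\phi_2^{-1}(t)$ is smooth; if the conclusion really followed from generality of the pair alone, the lemma would not carry that hypothesis.

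The paper instead uses that hypothesis directly to dispose of all $m \ge 1$ at once, with no dimension count on any parameter space. By hypothesis there is a $t$ with $\phi_1^{-1}(t)$ singular and $\phi_2^{-1}(t)$ smooth, which means $J(t) = ([1:0], [\alpha:1])$ for some $\alpha \in \CC$: the first $j$-coordinate is $\infty$, the second is finite. Now use the classical fact that the modular polynomial $\Phi_m(x,y)$ is monic in $x$ (Silverman's Advanced Topics, Exercise 2.18 on p.~181, is cited). Monicity means the projective closure $\Sigma_m \subset \PP^1 \times \PP^1$ of $\{\Phi_m = 0\}$ does not pass through any point of the form $([1:0], [\alpha:1])$ with $\alpha$ finite. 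Hence $J(\PP^1)$, which does contain such a point, is not contained in $\Sigma_m$, for \emph{any} $m$; both being irreducible curves, $J(\PP^1) \cap \Sigma_m$ is finite. Taking the countable union over $m$ and removing those preimages, and also removing the finitely many $t$ where a fiber is singular, gives the very general $t$ asked for. To repair your version, you would need to replace the vague closing sentence with a concrete genericity statement and check it is what ``general'' means in the lemma; adopting the paper's observation about the monicity of $\Phi_m$ and the point $([1:0],[\alpha:1])$ makes all of that unnecessary.
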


	\begin{proof} For $i=1,2$, consider the finite morphism $j_i:\PP^1\to\PP^1$ induced by the $j$-invariant of the elliptic fibration $\phi_i$ (see Remark \ref{Wgeneral}). Define the morphism $J\cnec (j_1,j_2):\PP^1\to\PP^1\times\PP^1$. Its image is an irreducible curve in $\PP^1 \times \PP^1$. 
		
		For each positive integer $n$, let $F_n(x, y) \in \ZZ[x, y]$ be the polynomial as in \cite[Theorem 6.3 in p.146]{Si94}. Then by \cite[Exercise 2.19.(a) in p.182]{Si94}, we have $F_n(j(\phi_1^{-1}(t)), j(\phi_2^{-1}(t))) = 0$ if and only if there is an isogeny $\phi_1^{-1}(t) \to \phi_2^{-1}(t)$ of degree $n$. By \cite[Exercise 2.18.(e) in p.181]{Si94}, each $F_n(x, y)$ is a product of some polynomials $\Phi_m(x, y)$ indexed by positive integers. By the expression in \cite[Exercise 2.18 in p.181]{Si94}, each $\Phi_m(x, y)$ viewed as a polynomial in the single variable $x$ has leading coefficient $1$. Together with \cite[Exercise 2.18.(b) in p.181]{Si94}, we obtain the irreducibility of $\Phi_m(x, y)$ in $\CC[x, y]$. 
		
		Let $\Sigma_m \subset \PP^1 \times \PP^1$ be the irreducible curve defined by the homogenization of $\Phi_m(x, y)$ using $x = s/t$ and $y = u/v$.  We claim that $J(\PP^1)$ intersects with each $\Sigma_m$ at finitely many points. Indeed, by our assumption, we can take $t \in \PP^1$ such that the fiber $\phi_1^{-1}(t)$ is singular, while the fiber $\phi_2^{-1}(t)$ is an elliptic curve, so $J(t)=([1:0],[\alpha:1])$ for some $\alpha\in\CC$. As we mentioned before, each $\Phi_m(x, y)$ viewed as a polynomial in the single variable $x$ has leading coefficient $1$, so $([1 : 0], [\alpha : 1])\notin \Sigma_m$. This implies $J(\PP^1) \neq \Sigma_m$, and the claim holds because both $\Sigma_m$ and $J(\PP^1)$ are irreducible. 
		
		Let $\Sigma \subset \PP^1 \times \PP^1$ be the union of the countably many curves $\Sigma_m$. Then the set $\PP^1\setminus (Z_1\cup Z_2\cup J^{-1}(J(\PP^1)\cap\Sigma))$ is non-empty with the property that each of its elements is a very general point, say $t$, satisfying that the fibers $\phi_1^{-1}(t)$ and $\phi_2^{-1}(t)$ are smooth, non-isogenous elliptic curves. 
	\end{proof}

	Now that we better understand the fibrations $\phi_i$ relatively to one another, we can consider the fiber product over $\PP^1$
	\[ \xymatrix@=1.5em{ & X = W_1 \times_{\PP^1} W_2 \ar[dl]_{p_1} \ar[dd]_{\phi} \ar[dr]^{p_2}   \\
		W_1 \ar[dr]_{\phi_1} & &     W_2 \ . \ar[dl]^{\phi_2}   \\
		& \PP^1 } \]
	As for every $t\in\PP^1$, the fiber of at least one of the $\phi_i$ above $t$ is smooth, the variety $X$ is smooth too. 
	We can also view $X$ as a complete intersection of two hypersurfaces in $\PP^1 \times Z_1 \times Z_2$, given by general members in the linear systems
	\[ |\OO_{\PP^1}(1)\boxtimes\OO_{Z_1}(D_1)\boxtimes \OO_{Z_2}| \ \  \mathrm{and} \ \ |\OO_{\PP^1}(1)\boxtimes\OO_{Z_1}\boxtimes \OO_{Z_2}(D_2)|. \]
	By adjunction, we obtain that
	\begin{equation}\label{-KX}
		\cO_X(-K_X) = \left(\OO_{\PP^1}\boxtimes\OO_{Z_1}(-K_{Z_1} - D_1) \boxtimes \OO_{Z_2}(-K_{Z_2} - D_2)\right)|_X,
	\end{equation}
	which is globally generated, hence nef and effective.
	
	\begin{definition}\label{def_sch}
		A smooth projective variety $X$ constructed as above is called a \textit{Schoen variety}. 
		A pair $(X, \Delta)$ is called a \textit{Schoen pair} if $X$ is a Schoen variety, and $\Delta$ is an effective $\QQ$-divisor such that $K_X + \Delta \sim_{\QQ} 0$. 
	\end{definition}
	
	Any Schoen variety $X$ can be associated many Schoen pairs $(X, \Delta)$ as long as $-K_X$ is non-trivial. Every Schoen pair is by definition a Calabi--Yau pair (as in Definition \ref{def-cypair}). Moreover, if $(X,\Delta)$ is a Schoen pair, then there exists a positive integer $m$ such that 
	\begin{equation}\label{eq_sp} 
		\Delta = \frac{1}{m}\Delta_{m,X}, \ \text{ with } \,\Delta_{m,X}\in|-mK_X|. 
	\end{equation}
	If $m \ge 2$ and $\Delta_{m,X}\in|-mK_X|$ is general, the Calabi--Yau pair $(X,\Delta)$ is klt.

	To conclude this section, we prove that, if for both $i=1,2$, the divisor $D_i$ chosen when constructing $W_i$ is in the linear system $|-K_{Z_i}|$, then the Schoen variety $X$ is a strict Calabi--Yau manifold.
	
	\begin{lemma}\label{su2.1} 
		Any Schoen variety $X$ is simply connected.
	\end{lemma}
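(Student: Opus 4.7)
My plan is to prove that $X$ is simply connected by combining the Lefschetz hyperplane theorem with a fibration argument. First, since each Fano manifold $Z_i$ is simply connected (by Kobayashi's theorem), so is the product $\PP^1 \times Z_i$. The variety $W_i$ is a smooth ample divisor in $\PP^1 \times Z_i$ of dimension $\dim Z_i \geq 2$, so the Lefschetz hyperplane theorem gives $\pi_1(W_i) = 1$.

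Next, I consider the fibration $\phi : X \to \PP^1$. As $\PP^1$ is simply connected and $\phi$ has connected general fiber $F_{\mathrm{gen}} = \phi_1^{-1}(t) \times \phi_2^{-1}(t)$, Nori's exact sequence for fundamental groups of fibrations over curves yields that $\pi_1(X)$ is a quotient of $\pi_1(F_{\mathrm{gen}}) = \pi_1(\phi_1^{-1}(t)) \times \pi_1(\phi_2^{-1}(t))$. Each factor $\phi_i^{-1}(t)$ is a smooth member of the basepoint-free pencil $|D_i|$ in $Z_i$. If $\dim Z_i \geq 3$, Lefschetz applied once more to this smooth ample hypersurface inside the simply connected Fano $Z_i$ forces $\pi_1(\phi_i^{-1}(t)) = 1$. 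If $\dim Z_i = 2$ and $D_i \not\sim -K_{Z_i}$, an adjunction computation using $K_{\phi_i^{-1}(t)} = (K_{Z_i} + D_i)|_{\phi_i^{-1}(t)}$ together with global generation of $-(K_{Z_i} + D_i)$ shows that $\phi_i^{-1}(t)$ is a rational curve, hence again simply connected.

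The main obstacle is the remaining case where $\dim Z_i = 2$ and $D_i \sim -K_{Z_i}$, where $W_i$ is a rational elliptic surface with section and $\phi_i^{-1}(t)$ is an elliptic curve, so $\pi_1(F_{\mathrm{gen}})$ is free abelian of positive rank. In this situation, one must show that the image of $\pi_1(F_{\mathrm{gen}})$ in $\pi_1(X)$ is trivial. This is achieved by exploiting a section of $\phi$ arising from pairs of points in the base loci $Y_i \subset Z_i$, combined with the nontrivial monodromy of the general rational elliptic surface $W_i$ around its $12$ nodal singular fibers (see Remark \ref{Wgeneral}): the Picard--Lefschetz transformations attached to these vanishing cycles generate a finite-index subgroup of $\mathrm{SL}_2(\ZZ)$ in the monodromy representation on $H_1(\phi_i^{-1}(t), \ZZ)$, which suffices to kill $\pi_1(\phi_i^{-1}(t))$ inside $\pi_1(X)$, following the classical simply-connectedness argument of Schoen for threefolds.
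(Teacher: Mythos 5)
Your proposal takes a genuinely different and significantly more complicated route than the paper, and the crucial step for the elliptic-curve case is not actually carried out. The first parts are fine: $W_i$ is simply connected (Lefschetz, or equivalently rational connectedness as the paper uses), and Nori's theorem does force $\pi_1(X)$ to be a quotient of $\pi_1(F_{\mathrm{gen}})$, where $F_{\mathrm{gen}} = F_1 \times F_2$. Your case analysis for $\dim Z_i \geq 3$ and for $\dim Z_i = 2$ with $D_i \not\sim -K_{Z_i}$ is also correct.

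The problem is the remaining case. You invoke ``Picard--Lefschetz transformations attached to vanishing cycles generate a finite-index subgroup of $\mathrm{SL}_2(\ZZ)$ ... which suffices to kill $\pi_1(\phi_i^{-1}(t))$ inside $\pi_1(X)$,'' but this is exactly the substantive part of the proof, and you have only named it, not proven it. You would need to argue that (a) near each nodal fiber of $\phi_1$ the other factor stays smooth (this is where the genericity assumption and Remark~\ref{Wgeneral} enter), so that locally $X$ is topologically a product $\phi_1^{-1}(\text{disk}) \times F_2$ and the vanishing cycle of $\phi_1^{-1}(t)$ dies in $\pi_1(X)$; and (b) the vanishing cycles from the twelve nodal fibers together with the monodromy action generate all of $\pi_1(F_1) \cong \ZZ^2$. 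This is the argument in Schoen's original paper for threefolds, but it requires the full topological analysis of the rational elliptic surface. Moreover, the role you ascribe to ``a section of $\phi$ arising from pairs of points in the base loci'' is vague and does not obviously plug into the monodromy argument.

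The paper sidesteps all of this with a much shorter observation that you miss: a section $s_j : \PP^1 \to W_j$ ($j \neq i$) induces a section $\sigma_i : W_i \to X$ of $p_i$, and choosing the basepoint of $F = F_1 \times F_2$ on $\sigma_i(W_i)$ makes the inclusion $F_i \hto X$ factor as $F_i \hto W_i \xto{\sigma_i} X$. Hence $\pi_1(F_i) \to \pi_1(X)$ factors through $\pi_1(W_i) = 1$, and one kills the image of $\pi_1(F_i)$ uniformly, with no case analysis and no monodromy computation. I would encourage you to replace your monodromy sketch with this section argument: it is both rigorous and considerably shorter, and it renders your three-case analysis of $\pi_1(\phi_i^{-1}(t))$ unnecessary.
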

	
	\begin{proof} The proof is similar to \cite[Lemma 1]{Sc86} and \cite[Lemma 2.1]{Su21}. 
		
		Let $U \subset \PP^1$ be the open subset over which the morphism $\phi: X \to \PP^1$ is smooth and set $V := \phi^{-1}(U)$. Let $i: V \hookrightarrow X$ and $j: U \hookrightarrow \PP^1$ be the natural inclusions. The restriction $\phi' \coloneqq \phi|_V : V \to U$ is topologically locally trivial with a fiber, say $F$. Since both $\phi_1$ and $\phi_2$ have sections, $\phi: X \to \PP^1$ also admits a section $\gs : \PP^1 \to X$. Consider the commutative diagram
		\[ \xymatrix{
			1 \ar[r]^{} &\pi_1(F)  \ar[r]^{} & \pi_1(V) \ar@{>>}[d]_{i_{\ast}} \ar@{>>}@<-3pt>[r]_{\phi'_{\ast}} & \pi_1(U) \ar@{>}[d]^{j_{\ast}} \ar@/_/[l]_{{\gs_{U}}_*} \ar[r]^{} & 1\\
			& & \pi_1(X) \ar@{>>}@<-3pt>[r]_{\phi_{\ast}} & \pi_1(\PP^1). \ar@/_/[l]_{{\gs}_*}  }
		\]
		Here the first row is exact by the homotopy long exact sequence, and $i_{\ast} : \pi_1(V) \to \pi_1(X)$ is surjective by \cite[Proposition 2.10.1]{Ko95}. 
		
		We claim that the image of $\pi_1(F)$ in $\pi_1(X)$ equals $\pi_1(X)$. Indeed, since $\pi_1(\PP^1)$ is trivial, the composition
		$i_* \circ {\gs_U}_* = \gs_* \circ j_*$
		is trivial.
		Using that $i_*$ is surjective, that this composition is trivial, and that $\pi_1(V)$ is generated by the union of its subgroups $ \pi_1(F)$ and ${\gs_U}_*\pi_1(U)$, we obtain
		$$\pi_1(X) = i_*\pi_1(V)
		= i_* \pi_1(F).$$
	
	We are now left to show that the image of $\pi_1(F)$ in $\pi_1(X)$ is trivial. Write $F = F_1 \times F_2$, where $F_i$ is a general fiber of $\phi_i: W_i \to \PP^1$ for $i = 1,2$. Since
	$\pi_1(F) = \pi_1(F_1) \times \pi_1(F_2)$, 
	it is enough to show that the image of $\pi_1(F_i)$ in $\pi_1(X)$ is trivial, which we prove for $i = 1$. 
	
	A section of $\phi_{2} : W_2 \to \PP^1$ gives rise to a section
	$s$ of $p_1: X \to W_1$.
	By construction, the homomorphism $\pi_1(F_1) \to \pi_1(X)$ is induced by
	$F_1 \hto W_1 \xto{s} X$,
	thus factors through $\pi_1(W_1)$.
	Since it is rationally connected, $W_1$ is simply connected, and hence the image of $\pi_1(F_1)$ in $\pi_1(X)$ is trivial.
\end{proof}

\begin{pro}\label{su2.1bis} 
	Suppose that $D_i \in |-K_{Z_i}|$ for both $i = 1,2$.
	Then the Schoen variety $X$ is a strict Calabi--Yau manifold (see Definition~\ref{def-cymfd}). 
\end{pro}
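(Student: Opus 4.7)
To prove that $X$ is a strict Calabi--Yau manifold, I need to verify the three conditions of Definition~\ref{def-cymfd}: simple connectedness, triviality of $K_X$, and $h^i(X, \cO_X) = 0$ for $0 < i < \dim X$. Simple connectedness is the content of Lemma~\ref{su2.1}, while under the assumption $D_i \in |-K_{Z_i}|$, formula~\eqref{-KX} immediately gives $\cO_X(-K_X) = \cO_X$, so $K_X$ is trivial. The nontrivial part is therefore the intermediate vanishing of $H^i(X, \cO_X)$.

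My plan for that vanishing is to exploit the fact that $X$ is a smooth complete intersection of two divisors $Y_1, Y_2$ in $P := \PP^1 \times Z_1 \times Z_2$, where
\[
\cO_P(Y_1) = \OO_{\PP^1}(1) \boxtimes \OO_{Z_1}(D_1) \boxtimes \OO_{Z_2}, \quad \cO_P(Y_2) = \OO_{\PP^1}(1) \boxtimes \OO_{Z_1} \boxtimes \OO_{Z_2}(D_2),
\]
and to resolve $\cO_X$ by the Koszul complex
\[
K^\bullet\colon\quad \cO_P(-Y_1-Y_2) \to \cO_P(-Y_1) \oplus \cO_P(-Y_2) \to \cO_P
\]
sitting in cohomological degrees $-2,-1,0$. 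The hypercohomology spectral sequence
\[
E_1^{p,q} = H^q(P, K^p) \Longrightarrow H^{p+q}(X, \cO_X)
\]
then reduces the problem to the cohomology of three locally free sheaves on $P$, which I attack with Künneth plus Kodaira vanishing.

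Since $Z_1$ and $Z_2$ are Fano, Künneth yields $H^{>0}(P, \cO_P) = 0$. Both sheaves $\cO_P(-Y_j)$ contain the tensor factor $\OO_{\PP^1}(-1)$, which has no cohomology on $\PP^1$, hence all of $H^\bullet(P, \cO_P(-Y_j))$ vanishes. For the outermost term $\cO_P(-Y_1-Y_2) = \OO_{\PP^1}(-2) \boxtimes \cO_{Z_1}(-D_1) \boxtimes \cO_{Z_2}(-D_2)$, Kodaira vanishing applied on each smooth Fano variety $Z_i$ with the ample divisor $D_i$ forces $H^b(Z_i, \cO_{Z_i}(-D_i)) = 0$ for $b < \dim Z_i$, while Serre duality combined with $D_i \sim -K_{Z_i}$ yields $H^{\dim Z_i}(Z_i, \cO_{Z_i}(-D_i)) = \CC$. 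Together with $H^1(\PP^1, \OO(-2)) = \CC$, Künneth leaves only one non-zero cohomology group of the outermost sheaf, precisely in degree $\dim P$.

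The upshot is that the $E_1$-page has exactly two non-vanishing entries, $E_1^{0, 0} = \CC$ contributing to $H^0(X, \cO_X)$, and $E_1^{-2, \dim P} = \CC$ contributing to $H^{\dim X}(X, \cO_X)$. A bidegree check rules out any non-zero differential connecting them, so the spectral sequence degenerates and yields $H^i(X, \cO_X) = 0$ for $0 < i < \dim X$, as required. The real work in this plan is the Koszul-plus-Künneth bookkeeping; both the cohomological inputs (Kodaira vanishing, Serre duality) and the spectral-sequence degeneration are standard, so I do not expect any serious obstacle beyond keeping track of bidegrees carefully.
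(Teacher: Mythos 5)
Your argument is correct, and it takes a genuinely different route from the paper's. You attack the vanishing $h^p(X,\OO_X)=0$ for $0<p<\dim X$ by treating $X$ as a smooth complete intersection in $P=\PP^1\times Z_1\times Z_2$, resolving $\OO_X$ by the Koszul complex, and computing the $E_1$ page of the hypercohomology spectral sequence via K\"unneth plus Kodaira vanishing (with Serre duality identifying $H^{\dim Z_i}(Z_i,-D_i)=\CC$ once $D_i\sim -K_{Z_i}$). The bookkeeping you outline closes: the only non-zero entries are $E_1^{0,0}=\CC$ in total degree $0$ and $E_1^{-2,\dim P}=\CC$ in total degree $\dim P-2=\dim X$, with no possible differentials between them, so the sequence degenerates and gives the desired vanishing. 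The paper instead exploits the fibration $p_2\colon X\to W_2$, whose general fiber is a Calabi--Yau manifold (Lemma~\ref{lem-CYfiber}), and proves a general relative-duality lemma (Lemma~\ref{lem-Riw}) computing $R^qp_{2*}\OO_X$; it then invokes Koll\'ar's direct-image results~\cite{KollDirIm,KollDirIm2} and the degeneration of the Leray spectral sequence to conclude $h^p(X,\OO_X)=h^p(W_2,\OO_{W_2})+h^{p-w_1+1}(W_2,\omega_{W_2})$, which vanishes in the relevant range by rational connectedness of $W_2$. Your Koszul-complex route is more elementary (only Kodaira vanishing, Serre duality and K\"unneth) and completely bypasses Koll\'ar's theorems, but it is specific to the ambient complete-intersection presentation of $X$; the paper's Lemma~\ref{lem-Riw} is a reusable statement about arbitrary Calabi--Yau fibrations with trivial total canonical bundle, which fits the fibration-centric perspective of the rest of the paper.
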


\begin{proof}
	By ~\eqref{-KX} and Lemma \ref{su2.1}, $X$ has trivial canonical bundle, and trivial fundamental group. We are left showing that $h^p(X,\OO_X)=0$ for every $0 < p <\dim X$.
	
	\begin{lem}\label{lem-Riw} 
		Let $g: \sX \to \sY$ be a surjective morphism between smooth projective varieties. Assume that a general fiber $F$ of $g$ is a Calabi--Yau manifold and that the canonical line bundle $\omega_{\sX}$ is trivial. 
		Then, for every positive integer $q$, we have
		\begin{equation*}
			R^q g_*\OO_{\sX} = 
			\begin{cases}
				\go_\sY, \text{ if } q = \dim \sX - \dim \sY, \\
				0, \text{  \ \ otherwise.}
			\end{cases}
		\end{equation*}
	\end{lem}
	\begin{proof}
		Set
		$r \cnec \dim \sX - \dim \sY$. 
		By~\cite[Theorem 2.1.(i)]{KollDirIm} and~\cite[Corollary 3.9]{KollDirIm2}, the sheaf 
		$R^qg_*\go_{\sX}=R^qg_*\cO_{\sX}$ is reflexive. 
		Since $\sY$ is smooth, the invertibility of $R^qg_*\cO_{\sX}$ follows provided it has rank one. Its rank is explicitly given by the dimension of $H^q(F,\cO_F)$, which is one if $q=0$ or $r$, and zero otherwise. Hence, we have 
		\begin{equation}\label{higherpushf}
			R^qg_*\cO_{\sX} = 
			\begin{cases}
				\text{a line bundle, \ if } q = 0 \text{ or } r,\\
				0, \text{\ otherwise.}
			\end{cases}
		\end{equation}
		By Grothendieck--Verdier duality ~\cite[Theorem 3.34]{HuybrechtsFM}, we have
		$$Rg_* \OO_{\sX} \simeq Rg_* \go_{\sX} \simeq R\cHom(Rg_*\cO_{\sX},\go_{\sY}[-r]).$$
		The Grothendieck spectral sequence gives
		$$E_2^{p,-q} \cnec \cExt^p(R^qg_*\cO_{\sX}, \go_{\sY}) \Rightarrow R^{p-q + r}g_* \OO_{\sX}$$
		(see e.g.~\cite[Example 2.70.ii)]{HuybrechtsFM}).
		But by (\ref{higherpushf}), the page $E_2$ has exactly two non-zero entries, namely $E_2^{0,0} = \go_{\sY}$, and $E_2^{0,-r}$. So Lemma~\ref{lem-Riw} follows.  
	\end{proof}
	
	We return to our Schoen variety $X$.
	For $i=1,2$, we let $w_i \cnec \dim W_i$.
	By Lemma \ref{lem-CYfiber}, and as $p_2: X \to W_2$ is a base change of $\phi_1: W_1 \to \PP^1$, the general fiber of $p_2$ is a Calabi--Yau manifold. We can thus apply Lemma~\ref{lem-Riw} to $p_2$,
	and obtain that
	\begin{equation*}
		R^q{p_2}_*\go_{X} = R^q{p_2}_*\OO_{X} = 
		\begin{cases}
			\cO_{W_2}, \text{  if } q = 0, \\
			\go_{W_2}, \,\text{ if } q = \dim w_1 - 1, \\
			0, \text{\ \ \,\quad otherwise.}
		\end{cases}
	\end{equation*}
	Together with~\cite[Corollary 3.2]{KollDirIm2}, this yields
	$$h^p(X,\OO_X)=h^p(W_2,\OO_{W_2})+h^{p-w_1+1}(W_2,\go_{W_2})$$ for all $0\le p\le \dim X$.
	Since $W_2$ is rationally connected, this is zero as soon as $p\ne 0$ and $p<w_1+w_2-1 = \dim X$.
\end{proof}

\section{Application to the Cone Conjecture}\label{prof}

In this section, we prove Theorem \ref{main}. The set-up and the notations were defined in Section \ref{cons}: We consider a Schoen variety $X$, fitting in a Schoen pair $(X,\Delta)$. Let us recall the Cartesian diagram defining $X$:
\[ \xymatrix@=1.5em{ & X = W_1 \times_{\PP^1} W_2 \ar[dl]_{p_1} \ar[dd]_{\phi} \ar[dr]^{p_2}   \\
	W_1 \ar[dr]_{\phi_1} & &     W_2 \ . \ar[dl]^{\phi_2}   \\
	& \PP^1 } \]

\begin{lemma}\label{na1.1} 
	We have    $$p_1^*N^1(W_1)_{\RR}+p_2^*N^1(W_2)_{\RR}=N^1(X)_{\RR}.$$ 
\end{lemma}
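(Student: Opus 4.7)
The plan is to realise $X$ as a smooth codimension-one subvariety $j : X \hookrightarrow V := W_1 \times Z_2$ whose Néron-Severi space splits by Künneth, and to show that the restriction map $j^* : N^1(V)_\RR \to N^1(X)_\RR$ is surjective. The map I have in mind sends $(w_1, w_2) \in X \subset W_1 \times W_2$ to $(w_1, \varepsilon_2(w_2)) \in V$, where $\varepsilon_2 : W_2 \to Z_2$ is the blow-down constructed in Subsection \ref{subsec-Wconstr}. One verifies that this map is a closed immersion whose image is the smooth hypersurface cut out in $V$ by the section of
$$ L := \pi_1^* \phi_1^* \cO_{\PP^1}(1) \otimes \pi_2^* \cO_{Z_2}(D_2) $$
pulled back from the defining section of $W_2 \subset \PP^1 \times Z_2$ via $\phi_1 \times \mathrm{id}_{Z_2}$. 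Since $W_1$ is rationally connected, one has $h^1(\cO_{W_1}) = 0$, and Künneth yields $N^1(V)_\RR = \pi_1^* N^1(W_1)_\RR \oplus \pi_2^* N^1(Z_2)_\RR$. Because $\pi_1 \circ j = p_1$, $\pi_2 \circ j = \varepsilon_2 \circ p_2$, and $\varepsilon_2^* N^1(Z_2)_\RR \subset N^1(W_2)_\RR$, surjectivity of $j^*$ implies the lemma.

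To prove that $j^*$ is surjective on Néron-Severi, since $X$ is simply connected by Lemma \ref{su2.1}, hence $h^1(\cO_X) = 0$, it is enough to show surjectivity of $H^2(V, \QQ) \to H^2(X, \QQ)$. The line bundle $L$ is base-point-free: its associated morphism is $\phi_1 \times \mathrm{id}_{Z_2} : V \to \PP^1 \times Z_2$, whose fibres have dimension at most $\dim W_1 - 1$. Sommese's generalization of the Lefschetz hyperplane theorem for $k$-ample line bundles then implies that $H^i(V, \QQ) \to H^i(X, \QQ)$ is an isomorphism for $i \leq \dim V - 2 - (\dim W_1 - 1) = \dim Z_2 - 1$. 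In particular, when $\dim Z_2 \geq 3$, the case $i = 2$ is in range; by symmetry, the analogous argument applied to $V' := Z_1 \times W_2$ covers the case $\dim Z_1 \geq 3$.

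The main obstacle is the remaining threefold case $\dim Z_1 = \dim Z_2 = 2$ (the classical Schoen setting), where Sommese's theorem falls short by exactly one dimension. Here I would instead argue via the Leray spectral sequence for $\phi : X \to \PP^1$, which degenerates at $E_2$ by Deligne. The ensuing decomposition
$$ H^2(X, \QQ) = H^0(\PP^1, R^2 \phi_* \QQ) \oplus H^1(\PP^1, R^1 \phi_* \QQ) \oplus H^2(\PP^1, \QQ), $$
combined with the Künneth splitting of $R^\bullet \phi_* \QQ$ in terms of $R^\bullet \phi_{i*} \QQ$, reduces the problem to showing that the ``cross term'' $H^0(\PP^1, R^1 \phi_{1*} \QQ \otimes R^1 \phi_{2*} \QQ)$ vanishes. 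By Lemma \ref{nonisogenous}, the very general fibres of $\phi_1$ and $\phi_2$ are non-isogenous elliptic curves, so $R^1 \phi_{1*} \QQ$ and $R^1 \phi_{2*} \QQ$ are non-isomorphic irreducible variations of Hodge structure over an open subset of $\PP^1$, whence the tensor product has no non-zero global sections. Restricting the resulting decomposition of $H^2(X, \QQ)$ to the $(1,1)$-part then yields the desired decomposition of $N^1(X)_\RR$.
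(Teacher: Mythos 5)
Your proposal takes a genuinely different route from the paper. The paper proves a Künneth decomposition of $\Pic(F_1 \times F_2)$ for a very general smooth fiber $F_1 \times F_2$ of $\phi$ (using Lefschetz and adjunction when one of the $F_i$ is a rational curve or has dimension at least $2$, and the structure of $\Pic$ of a product of non-isogenous elliptic curves via Lemma~\ref{nonisogenous} otherwise), and then lifts the decomposition to $N^1(X)_\RR$ using a spreading-out result of Voisin together with the observation that every component of a singular fiber of $\phi$ pulls back from one of the $W_i$. You instead observe that $X$ embeds as a smooth divisor in $V := W_1 \times Z_2$ cut out by a $(\dim W_1 - 1)$-ample line bundle, and invoke Sommese's Lefschetz theorem to get $N^1(V) \cong N^1(X)$ when $\dim Z_2 \geq 3$ (with a symmetric argument for $\dim Z_1 \geq 3$); for the remaining case $\dim Z_1 = \dim Z_2 = 2$ you propose a Leray/Künneth computation over $\PP^1$. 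Your divisorial realization of $X$ and the Sommese argument are elegant and bypass the Voisin spreading-out step, but the approach is less uniform since the threefold case requires a separate argument.

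Your threefold argument as written has a genuine gap: Deligne's $E_2$-degeneration of the Leray spectral sequence applies to \emph{smooth} projective morphisms, and $\phi : X \to \PP^1$ has singular fibers, so the asserted direct-sum decomposition of $H^2(X, \QQ)$ does not follow from Deligne's theorem. It can be repaired --- for instance by combining the known degeneration for the fibered surfaces $W_i \to \PP^1$, the injectivity of $p_i^*$ (both $p_i$ admit sections), and the fact that $p_1^* H^2(W_1, \QQ) \cap p_2^* H^2(W_2, \QQ) = \phi^* H^2(\PP^1, \QQ)$ into a dimension count against $\sum_{p+q=2}\dim E_2^{p,q}$ for $\phi$ --- but that has to be spelled out. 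In addition, the step from ``non-isogenous very general fibers'' (Lemma~\ref{nonisogenous}) to ``$R^1\phi_{1*}\QQ$ and $R^1\phi_{2*}\QQ$ restrict to non-isomorphic irreducible local systems on a common smooth locus,'' which is what the Schur-type vanishing of the cross-term actually needs, relies on Deligne semisimplicity and the theorem of the fixed part for variations of Hodge structure; you should cite these, since they are less immediate than the way the non-isogeny hypothesis enters the paper's argument through the exact sequence for $\Pic$ of a product of abelian varieties.
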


\begin{proof}
	
	Let $p \in \PP^1$ be a very
	general point and let 
	$F_i \cnec \phi_i^{-1}(p) \subset W_i$.
	
	\begin{claim}\label{claim-isoprod}
		The map
		$$\Psi : \Pic(F_1) \times \Pic(F_2) \to \Pic(F_1 \times F_2)$$
		defined by $\Psi(L,M) = L \boxtimes M$ is an isomorphism.
	\end{claim}
	
	\begin{proof}
		First suppose that
		$W_1$ and $W_2$ are not both rational elliptic surfaces.
		If there is $i$ such that $Z_i$ has dimension at least $3$, then $F_i$ is a smooth ample hypersurface in $Z_i$, and so by Lefschetz hyperplane theorem, $F_i$ has trivial irregularity. If there is $i$ such that $Z_i$ is a surface and $D_i\notin |-K_{Z_i}|$, then $F_i$ is a smooth curve in $Z_i$, and by adjunction, it is in fact a rational curve, which again has trivial irregularity. In any case, Claim~\ref{claim-isoprod} follows 
		from~\cite[Exercise III.12.6]{HarBook}.
		
		Assume now that both $W_1$ and $W_2$ are rational elliptic surfaces. Then, by Lemma \ref{nonisogenous}, the fibers $F_1$ and $F_2$ are smooth, non-isogenous elliptic curves.
		We have a short exact sequence of abelian groups~\cite[Theorem 11.5.1]{MR2062673}
		$$0 \to \Pic(F_1) \times \Pic(F_2) \xto{\Psi} \Pic(F_1 \times F_2) \to  \Hom(F_1,F_2) \to 0,$$
		where $\Hom(F_1,F_2)$ denotes the group of homomorphisms
		from $F_1$ to $F_2$ preserving both the variety and the group structure. Since $F_1$ and $F_2$ are non-isogenous, $\Hom(F_1,F_2) = 0$, 
		which proves Claim~\ref{claim-isoprod}.
	\end{proof}

	Let $L$ be a line bundle on $X$.
	Claim~\ref{claim-isoprod} implies that
	$$L_{|\phi^{-1}(p)} \simeq L_{|F_1 \times \{u\}} \boxtimes L_{|\{v\} \times F_2},$$
	for any points $ u \in F_2$ and $v \in F_1$.
	
	For each $i = 1,2$, we choose a section $s_i : \PP^1 \to W_i$
	and let $\gs_i : W_i \to X$ be the induced section:
	$$\gs_1(w_1) \cnec (w_1,s_2(\phi_1(w_1))) \in  W_1 \times_{\PP^1} W_2,$$
	and similarly for $\gs_2$.
	We have
	\begin{align*}
		L_{|\phi^{-1}(p)} 
		& \simeq 
		L_{|F_1 \times \{s_1(p)\}} \boxtimes L_{|\{s_2(p)\} \times F_2} \\
		& \simeq (\gs_1^*L)_{|F_1} \boxtimes (\gs_2^*L)_{|F_2} \\
		& \simeq (p^*_1\gs_1^*L \otimes p^*_2\gs_2^*L)_{|\phi^{-1}(p)}.       
	\end{align*}
	Since $p \in \PP^1$ is very general, by applying \cite[Theorem 3.1 and Remark 3.3]{VoisinChDecomp} to the smooth part of the fibration $\phi: X \to \PP^1$, we obtain 
	$$L \sim_{\QQ}  p^*_1\gs_1^*L \otimes p^*_2\gs_2^*L\otimes \cO_X(D),$$
	for a divisor $D$ whose support is contained in
	a finite union of fibers of $\phi : X \to \PP^1$.
	Note that an irreducible component $R$ of a fiber of $\phi$ embeds in the product $\phi_1^{-1}(\phi(R))\times\phi_2^{-1}(\phi(R))$, of which at least one factor $\phi_i^{-1}(\phi(R))$ is smooth, hence irreducible. It follows that there is an irreducible component $R'$ of $\phi_j^{-1}(\phi(R))$ with $j = \{1, 2\} \setminus \{i\}$ such that $R=p_j^* R'$.
	Applying this to the irreducible components of $D$, we obtain that
	$$N^1(W_1)_{\RR} \times N^1(W_2)_{\RR} \xto{p_1^* + p_2^*} N^1(X)_{\RR}$$
	is surjective.
\end{proof}

\begin{lemma}\label{lem-nefdec2} For every $D \in \Nef(X)$, one can write $D = p_1^*D_1 + p_2^*D_2$, where $D_i \in \Nef(W_i)$.
\end{lemma}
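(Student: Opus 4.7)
The plan is to deduce Lemma~\ref{lem-nefdec2} as a direct application of Theorem~\ref{thm-nefdec} to the Cartesian square defining the Schoen variety $X$. Recall that Theorem~\ref{thm-nefdec} provides the desired decomposition $\Nef(W) = p_1^*\Nef(W_1) + p_2^*\Nef(W_2)$ for any fiber product $W = W_1 \times_B W_2$ over a projective curve $B$, under the two assumptions that $W$ is irreducible and that the N\'eron--Severi space of $W$ itself decomposes as $p_1^*N^1(W_1)_{\RR} + p_2^*N^1(W_2)_{\RR}$.

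In our situation, the base $B = \PP^1$ is a projective curve, so the theorem is applicable as soon as the two hypotheses are checked. First, the Schoen variety $X$ is smooth by construction (as recalled in Subsection~\ref{subsec-Xconstr}, the genericity assumption on the pair $(\phi_1,\phi_2)$ ensures smoothness, since at every $t \in \PP^1$ one of the fibers $\phi_i^{-1}(t)$ is smooth); in particular, $X$ is irreducible, verifying hypothesis~(1). Second, hypothesis~(2) is precisely the content of Lemma~\ref{na1.1}, which we just established.

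Therefore, applying Theorem~\ref{thm-nefdec} to the diagram
\[
\xymatrix@=1.5em{ & X = W_1 \times_{\PP^1} W_2 \ar[dl]_{p_1} \ar[dd]_{\phi} \ar[dr]^{p_2} \\
W_1 \ar[dr]_{\phi_1} & & W_2 \ar[dl]^{\phi_2} \\
& \PP^1 }
\]
yields $\Nef(X) = p_1^*\Nef(W_1) + p_2^*\Nef(W_2)$, which is exactly the statement of Lemma~\ref{lem-nefdec2}. Since both hypotheses are already in hand, there is no real obstacle: the proof is essentially a one-line invocation of the preceding two results.
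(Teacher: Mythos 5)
Your proposal is correct and matches the paper's own proof, which simply notes that the lemma follows from Lemma~\ref{na1.1} together with Theorem~\ref{thm-nefdec}; you spell out the verification of the two hypotheses (irreducibility and N\'eron--Severi decomposition) a bit more explicitly, but the argument is the same.
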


\begin{proof} Lemma~\ref{lem-nefdec2} follows from Lemma~\ref{na1.1} and Theorem~\ref{thm-nefdec}. 
\end{proof}

\begin{theorem}[$=$ Theorem \ref{main}]\label{main2} Let $(X,\Delta)$ be a Schoen pair. Then 
	$$\Nef(X)=\Nefp(X)=\Nefe(X),$$
	and moreover, there exists a rational polyhedral fundamental domain for the action of $\Aut(X, \Delta)$ on $\Nefe(X)$. 
\end{theorem}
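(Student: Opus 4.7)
The plan is to first establish the equality $\Nef(X) = \Nefp(X) = \Nefe(X)$, then to exhibit subgroups $H_i \subset \Aut(W_i/\PP^1)$ whose diagonal image lies in $\Aut(X, \Delta)$, and finally to invoke Corollary~\ref{cor-H1H2}. The equality of cones will follow directly from Lemma~\ref{lem-nefdec2} and Proposition~\ref{prop-lef}: any $D \in \Nef(X)$ decomposes as $D = p_1^*D_1 + p_2^*D_2$ with $D_i \in \Nef(W_i) = \Nefp(W_i) = \Nefe(W_i)$, and since $p_i^*$ preserves rationality and effectivity of divisor classes, $D \in \Nefp(X) \cap \Nefe(X)$.

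For the choice of $H_i$, I will distinguish based on whether $\Nef(W_i)$ is rational polyhedral. By Proposition~\ref{prop-lef} combined with Lemma~\ref{lem-dimZ2}, this fails exactly when $W_i$ is a rational elliptic surface, i.e., when $\dim Z_i = 2$ and $D_i \in |-K_{Z_i}|$. When $\Nef(W_i)$ is rational polyhedral, I will simply take $H_i = \{\id\}$ and $\Pi_i = \Nef(W_i)$. In the remaining case, I will set $H_i$ to be the finite-index subgroup of the Mordell--Weil group $MW(W_i) \subset \Aut(W_i/\PP^1)$ consisting of translations fixing every irreducible component of every singular fiber of $\phi_i$. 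The full group $MW(W_i)$ is known to act on $\Nef(W_i)$ with a rational polyhedral fundamental domain, in the spirit of~\cite[Proposition 3.1 and its proof]{GM93}, and this property descends to the finite-index subgroup $H_i$ after taking a finite union of translates.

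The main obstacle will be verifying $H_1 \times H_2 \subset \Aut(X, \Delta)$, which is nontrivial only when some $W_i$ is a rational elliptic surface. In that case, combining $D_i \in |-K_{Z_i}|$ with \eqref{-KX} yields $-K_X = p_{3-i}^* L$ for some line bundle $L$ on $W_{3-i}$, so a general fiber $F$ of $p_{3-i}$, which is an elliptic curve isomorphic to a fiber of $\phi_i$, satisfies $\Delta \cdot F = 0$. Effectivity then forces every irreducible component $\Gamma$ of $\supp(\Delta)$ to project into a proper closed subset of $W_{3-i}$ under $p_{3-i}$, and a dimension count identifies $\Gamma$ as an irreducible component of $p_{3-i}^{-1}(E)$ for some irreducible divisor $E \subset W_{3-i}$. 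When $E$ is not contained in a fiber of $\phi_{3-i}$, the irreducibility of generic $\phi_i$-fibers makes $p_{3-i}^{-1}(E)$ irreducible, so $\Gamma = p_{3-i}^*E$ is manifestly fixed by $(g, \id)$ for $g \in H_i$. When $E$ is a fiber of $\phi_{3-i}$ over some $t \in \PP^1$, we have $p_{3-i}^{-1}(E) = \phi_i^{-1}(t) \times E$, and $\Gamma$ corresponds to $C \times E$ for some irreducible component $C$ of the (possibly singular) fiber $\phi_i^{-1}(t)$, which is preserved by $H_i$ by its defining property. Once this verification is complete, I will apply Corollary~\ref{cor-H1H2} to $H = \Aut(X, \Delta) \supset H_1 \times H_2$, using Lemma~\ref{na1.1} for the required N\'eron--Severi decomposition, to obtain the desired rational polyhedral fundamental domain for the $\Aut(X, \Delta)$-action on $\Nef(X) = \Nefp(X) = \Nefe(X)$.
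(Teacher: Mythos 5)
Your proposal is correct, follows the same high-level structure as the paper (equality of cones via Lemma~\ref{lem-nefdec2} and Proposition~\ref{prop-lef}, then Corollary~\ref{cor-H1H2} applied to $H_1\times H_2 \le \Aut(X,\Delta)$), but diverges from the paper's argument in two places, both of which make your route more laborious than necessary. First, for the rational-elliptic-surface factors, the paper takes $H_i = \Aut(W_i/\PP^1)$ and cites Totaro's theorem for rational elliptic surfaces with $-K$ semiample to produce the cone $\Pi_i$, whereas you take a subgroup of the Mordell--Weil group fixing all singular-fiber components and invoke Grassi--Morrison; both work, but the paper's choice allows the subsequent invariance check to be done at the level of line bundles rather than components. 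Second, and more importantly, your verification of $H_1\times H_2\subset\Aut(X,\Delta)$ goes through a case analysis of the irreducible components of $\supp(\Delta)$ (pullback divisors versus ``box'' components $C\times E$ over degenerate fibers), which forces you to impose the extra stabilization condition on $H_i$. The paper bypasses all of this by observing that in the mixed case, $\cO_X(-K_X)\simeq p_{3-i}^*\cO_{W_{3-i}}(-K_{Z_{3-i}}-D_{3-i})$, and since $p_{3-i}$ is proper surjective with connected fibers, pullback gives an isomorphism on global sections $H^0(W_{3-i},-)\xrightarrow{\sim}H^0(X,p_{3-i}^*(-))$; hence $\Delta_{m,X}$ is literally $p_{3-i}^*\Delta_{m,W_{3-i}}$ for some divisor on $W_{3-i}$, and any element of $\Aut(W_i/\PP^1)\times\{\id\}$ fixes $p_{3-i}^*(-)$ outright. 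This removes the need for your component analysis and the restriction on $H_i$. Your analysis is nevertheless sound: the only cosmetic slip is the phrase ``when $E$ is a fiber of $\phi_{3-i}$,'' which should read ``when $E$ is an irreducible component of a fiber,'' and the irreducibility of $p_{3-i}^{-1}(E)$ in the dominant case does require a brief justification (geometric integrality of the generic $\phi_i$-fiber plus the fact that no component of a pure-dimensional fiber product over an irreducible base can be vertical), but these are bookkeeping matters, not gaps.
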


\begin{proof} 
	
	Since $\Nef(W_i)=\Nefp(W_i) = \Nefe(W_i)$ by Proposition \ref{prop-lef},
	we have, by Lemma \ref{lem-nefdec2}, $\Nef(X)=p_1^*\Nefp(W_1)+p_2^*\Nefp(W_2)\subset\Nefp(X)$, so $\Nef(X)=\Nefp(X)$. Similarly, we have $\Nef(X)=\Nefe(X)$. This proves the first assertion.
	
	Define the subgroups $H_i \le \Aut(W_i)$ by
	$$H_i =
	\begin{cases}
		\Aut(W_i/\PP^1), \,  \text{ if } W_i
		\text{ is a rational elliptic surface,} \\
		\{\mathrm{id}_{W_i}\}, \, \text{ otherwise. } 
	\end{cases}
	$$
	Then there exists a rational polyhedral cone
	$\Pi_i \subset \Nef^{+}(W_i)$ such that 
	$H_i \cdot \Pi_i$ contains $\Amp(W_i)$.
	Indeed, the case where 
	$W_i$ is a rational elliptic surface with $-K_{W_i}$ semiample follows from
	\cite[Theorem 8.2]{To08},
	and the other cases follow from
	Proposition~\ref{prop-lef}. 
	
	We claim that $H_1\times H_2\le\Aut(X,\Delta)$.
	Note that there exists a positive integer $m$ such that 
	$$\Delta = \frac{1}{m}\Delta_{m,X}$$ 
	for some $\Delta_{m,X}\in|-mK_X|$. 
	If neither $W_1$ nor $W_2$ is a
	rational elliptic surface, then $H_1 \times H_2$ is trivial by definition.
	If both $W_1$ and $W_2$ are rational elliptic surfaces, then $\Delta_{m,X}=0$ and clearly, $H_1\times H_2\le\Aut(X)$. Finally, if one of the $W_i$, say $W_1$, is a rational elliptic surface, and the other, say $W_2$, is not,
	then $\cO_X(-K_X) \simeq p_2^*\cO_{W_2}(-K_{Z_2} - D_2)$.
	Since $p_2$ is proper surjective with connected fibers, 
	the pullback $p_2^*$ induces an isomorphism
	$$H^0(X, p_2^*\OO_{W_2}(-m(K_{Z_2}+D_2))) \simeq 
	H^0(W_2, \OO_{W_2}(-m(K_{Z_2}+D_2))).$$ 
	So $\Delta_{m,X}=p_2^*\Delta_{m,W_2}$, 
	for some divisor $\Delta_{m,W_2}\in |-m(K_{Z_2}+D_2)|$. 
	Since $H_2 = \{ \mathrm{id}_{W_2}\}$ in this case, 
	it follows that $\Delta_{m,X}$ is invariant under $H_1 \times H_2$. This proves the claim. 
	
	It then follows from Corollary~\ref{cor-H1H2}
	that $\Nefe(X) = \Nefp(X)$ has 
	a rational polyhedral fundamental domain
	$\Pi$ for the $\Aut(X,\Delta)$-action. 
\end{proof}

\begin{remark}
	
	In \cite{GM93}, the authors verified the Cone Conjecture for a strict Calabi--Yau threefold $X = W_1 \times_{\PP^1} W_2$, where both $W_i$ are general rational elliptic surfaces with sections. They use the following identification shown by Namikawa~\cite[Proposition 2.2 and Corollary 2.3]{Na91}
	\[ \Aut(X)\cong \Aut(W_1)\times \Aut(W_2), \]
	which our proof bypasses, using Looijenga's result (Lemma \ref{looij}) instead.
\end{remark}

\begin{example}\label{ex-infinite} 
	Fix an integer $n\ge 3$.
	Let us explain how to choose $Z_1,Z_2,D_1,$ and $D_2$ so that our construction produces a strict Calabi--Yau manifold $X$ of dimension $n$, such that $\Nef(X)$ admits infinitely many extremal rays and $X$ satisfies the Cone Conjecture.
	We take $Z_1 = \PP^2$ and $D_1=\OO_{\PP^2}(3)$, so that $W_1$ is a general rational elliptic surface. We take $Z_2$ to be a Fano variety of dimension $n-1$ with $-K_{Z_2}$ globally generated (for example, $Z_2=\PP^{n-1}$), and we take $D_2= -K_{Z_2}$.
	
	The Schoen variety $X$ obtained from these choices is a strict Calabi--Yau manifold by Proposition \ref{su2.1bis}, and $\Nef(X)$ admits infinitely many extremal rays by Lemma~\ref{na1.1}, by the fact that $\Nef(W_1)$ admits infinitely extremal rays already, and by Corollary \ref{cor-extr}.  
\end{example}

We conclude with an unsurprising corollary of the fact that Schoen varieties satisfy the Cone Conjecture.

\begin{corollary}\label{cor_finite}
	Let $X$ be a Schoen variety. Then the group $\pi_0 \Aut(X)$ is finitely presented, and there are at most finitely many real forms for $X$, up to isomorphism.
\end{corollary}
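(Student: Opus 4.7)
The plan is to deduce both assertions from Theorem~\ref{main} by combining Proposition~\ref{pro-looij}.(2), the Fujiki--Liebermann theorem, and a general principle relating the Cone Conjecture to real forms.

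For the finite presentation of $\pi_0\Aut(X)$, I would fix any Schoen pair $(X,\Delta)$ (taking $\Delta=0$ if $K_X\sim 0$, and otherwise $\Delta=\tfrac{1}{m}\Delta_{m,X}$ with $\Delta_{m,X}\in|-mK_X|$ general). Theorem~\ref{main} supplies a rational polyhedral cone $\Pi\subset\Nefp(X)$ with $\Amp(X)\subset\Aut(X,\Delta)\cdot\Pi\subset\Aut(X)\cdot\Pi$, and Proposition~\ref{pro-looij}.(2) applied to $H=\Aut(X)$ then yields that $\rho(\Aut(X))\le\GL(N^1(X))$ is finitely presented. This representation factors through $\pi_0\Aut(X)$, inducing a surjection $\bar\rho:\pi_0\Aut(X)\tto\rho(\Aut(X))$. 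Every element of $\ker\bar\rho$ stabilizes any prescribed ample class $\eta$; by Fujiki--Liebermann (as recalled in the proof of Lemma~\ref{lem-trivstab}), the stabilizer $\Aut(X)_\eta$ is a compact complex Lie group, and its identity component coincides with $\Aut^0(X)$ (since $\Aut^0(X)$ acts trivially on $N^1(X)$ and is connected). Hence the image of $\Aut(X)_\eta$ in $\pi_0\Aut(X)$ is the finite group $\pi_0(\Aut(X)_\eta)$, and therefore $\ker\bar\rho$ is finite. As an extension of a finitely presented group by a finite group is finitely presented, $\pi_0\Aut(X)$ is finitely presented.

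For the finiteness of real forms, I plan to invoke a general principle: real forms of $X$ up to isomorphism are classified by the pointed set $H^1(\Gal(\CC/\RR),\Aut(X))$, each cocycle being represented by an antiholomorphic involution of $X$ modulo conjugation by $\Aut(X)$. Such an involution induces an integral linear involution of $N^1(X)$ preserving $\Nef(X)$. Recent finiteness results on real forms of K-trivial varieties (or more generally of varieties with large automorphism group) assert that whenever $\Aut(X)$ admits a rational polyhedral fundamental domain on $\Nefp(X)$, the number of conjugacy classes of such antiholomorphic involutions is finite. Theorem~\ref{main} together with the inclusion $\Aut(X,\Delta)\subset\Aut(X)$ and Proposition~\ref{pro-looij}.(1) guarantees that $\Aut(X)$ itself admits such a fundamental domain, and the claim follows.

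The main obstacle is matching the hypotheses of the relevant real-form finiteness theorem to our precise setting; the first part of the corollary is, by contrast, a routine assembly of Proposition~\ref{pro-looij}.(2) and Fujiki--Liebermann, once one observes that the fundamental domain produced for $\Aut(X,\Delta)$ in Theorem~\ref{main} also witnesses the required hypothesis $\Amp(X)\subset\Aut(X)\cdot\Pi$ for the full automorphism group.
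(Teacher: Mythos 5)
Your proposal is correct and follows the same route as the paper: apply Theorem~\ref{main} to exhibit a rational polyhedral cone $\Pi$ with $\Amp(X)\subset\Aut(X,\Delta)\cdot\Pi\subset\Aut(X)\cdot\Pi$, feed this into Proposition~\ref{pro-looij}.(2) to get finite presentation of $\rho(\Aut(X))$, use Fujiki--Liebermann to see $\ker\overline{\rho}$ is finite, and conclude by the fact that an extension of a finitely presented group by a finite group is finitely presented; for real forms, verify the hypothesis of the theorem on real-form finiteness (the paper's Theorem~\ref{thm-realstr}, from \cite{DGLOWY}) via the same cone $\Pi$. The only cosmetic difference is that the paper cites \cite[Corollary 2.11]{Br18} directly for the finiteness of $\Ker(\overline{\rho})$, whereas you re-derive it from the stabilizer of an ample class — both are valid, and the substance of the two proofs is the same.
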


\begin{proof} 
	The linear action $\rho: \Aut(X) \to \GL(N^1(X))$ induces and factorizes through an action $$\overline{\rho}: \pi_0\Aut(X) \to \GL(N^1(X)).$$ 
	We let $\Aut^{\ast}(X) = \rho (\Aut(X)) = \overline{\rho}(\pi_0\Aut(X))$. 
	
	Choose an effective $\QQ$-divisor $\gD$ on $X$ such that $(X,\gD)$ is a Schoen pair.
	By Theorem~\ref{main}, there exists a rational polyhedral cone $\Pi \subset \Nefp(X)$ such that
	$$\Amp(X)\subset \Aut(X,\Delta)\cdot \Pi\subset\Aut^{\ast}(X)\cdot \Pi.$$
	It follows from Proposition~\ref{pro-looij} that there is a rational polyhedral fundamental domain for the $\Aut^{\ast}(X)$-action on $\Nefp(X)$, and that the group $\Aut^{\ast}(X)$ is finitely presented. 
	By Fujiki--Liebermann's theorem~\cite[Corollary 2.11]{Br18}, the kernel $\Ker (\overline{\rho})$ is finite, and so the first claim follows from \cite[Corollary 10.2]{Jo97}. 
	
	The second claim follows from 
	Theorem~\ref{thm-realstr} below. 
\end{proof}

\begin{theorem}[{\cite[Theorem 1.6]{DGLOWY}}]\label{thm-realstr} Let $V$ be a smooth complex projective variety. Assume that $\Nefp(V)$ contains a rational polyhedral cone $\Pi$ such that
	$$\Amp(V)\subset \Aut(V)\cdot\Pi.$$ Then $V$ has at most finitely many mutually non-isomorphic real forms.
\end{theorem}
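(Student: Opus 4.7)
The plan is to translate real forms into a Galois-cohomological problem, reduce to bounding $\Aut^{\ast}(V)$-conjugacy classes of involutions on $N^1(V)$, and then invoke the cone hypothesis to close the argument.

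Isomorphism classes of real forms of $V$ correspond bijectively to $\Aut(V)$-conjugacy classes of antiregular involutions $\sigma \colon V \to V$ (equivalently, to $H^1(\Gal(\CC/\RR), \Aut(V))$). Each such $\sigma$ pulls back to an involutive element $\sigma^{\ast} \in \GL(N^1(V))$ preserving $\Amp(V)$, and the assignment $[\sigma] \mapsto [\sigma^{\ast}]$ descends to a map
$$\Phi \colon \{\text{iso. classes of real forms of } V\} \longrightarrow I(V)/\Aut^{\ast}(V),$$
where $I(V)$ is the set of involutions of $N^1(V)$ preserving $\Amp(V)$ and $\Aut^{\ast}(V) := \rho(\Aut(V))$. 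By Fujiki--Liebermann, $\Ker(\rho)$ is a finite extension of the connected algebraic group $\Aut^0(V)$; combining this with classical finiteness of real forms of connected algebraic groups (Borel--Serre for the linear part together with the classical theory of real forms of abelian varieties) and a standard twisting argument in nonabelian cohomology, I would show that $\Phi$ has finite fibers. It therefore suffices to show that $I(V)/\Aut^{\ast}(V)$ is finite.

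To bound $I(V)/\Aut^{\ast}(V)$, I would leverage the cone hypothesis. Given $\tau \in I(V)$, pick a rational class $\eta$ in the interior of $\Pi$ and form the symmetrized class $\bar{\eta} := \eta + \tau \eta$, which is a $\tau$-fixed rational class lying in $\Amp(V)$. By hypothesis there exists $g \in \Aut^{\ast}(V)$ with $g \bar{\eta} \in \Pi$, so after conjugating $\tau$ by $g$ I may assume $\tau$ fixes a rational class in $\Pi$. Since $\Pi$ is rational polyhedral, the rational classes it contains in any fixed combinatorial position of faces are finite up to scaling; combining this with Lemma~\ref{lem-trivstab} (to exhibit an ample rational class whose $\Aut^{\ast}(V)$-stabilizer is trivial) and with the fact that $\tau$ is $\ZZ$-linear of finite order, a Fujiki--Liebermann-type rigidity bounds the number of such $\tau$ up to conjugation.

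The main obstacle is this last step: from the rational polyhedral structure of $\Pi$ and the integrality of $N^1(V)$, one must carefully extract the finiteness of involutions fixing an ample rational class modulo the (generally infinite) action of $\Aut^{\ast}(V)$. A subtle point is that in the Cone Conjecture setting $\Aut^{\ast}(V)$ is typically infinite, so one must combine Fujiki--Liebermann with the polyhedral geometry of $\Pi$ to pin down $\tau$ up to conjugation. The Galois-cohomological bookkeeping among $\Aut^0(V)$, $\Ker(\rho)$, and $\Aut^{\ast}(V)$, used to reduce $\Phi$ to finite fibers, also requires care; once carried out, these steps yield the claimed finiteness of real forms of $V$.
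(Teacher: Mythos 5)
This statement is not proved in the paper at all: it is quoted verbatim from [DGLOWY, Theorem 1.6], so your attempt has to be measured against that paper's argument, whose general shape (Galois cohomology for $\Gal(\CC/\RR)$, reduction to conjugacy classes of involutions, moving a fixed ample class into a polyhedral cone) you do anticipate correctly. However, there is a genuine gap at precisely the step you yourself flag as the main obstacle. Your justification there is false: a rational polyhedral cone of dimension at least two contains \emph{infinitely} many rational rays, in its interior and in any fixed face, so "the rational classes it contains in any fixed combinatorial position of faces are finite up to scaling" is simply not true, and knowing that a conjugate of $\tau$ fixes some rational class of $\Pi$ does not by itself confine $\tau$ to a finite list. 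Invoking a "Fujiki--Liebermann-type rigidity" does not repair this: Fujiki--Liebermann (or Lemma~\ref{lem-trivstab}) controls the stabilizer of one fixed ample class inside $\Aut(V)$, not the set of integral involutions of $N^1(V)$ each fixing \emph{some} ample rational class. The ingredient that actually closes the argument is Looijenga's theory: from the hypothesis $\Amp(V)\subset\Aut(V)\cdot\Pi$ one first upgrades $\Pi$ to a genuine rational polyhedral fundamental domain (as in Proposition~\ref{pro-looij}), and such a domain enjoys the Siegel property that only finitely many group elements $\gamma$ satisfy $\gamma\Pi\cap\Pi\neq\varnothing$. Since your conjugated involution fixes a point of $\Pi\cap\Amp(V)$, it satisfies exactly this condition and hence belongs to that finite set; this, applied to finite subgroups of the relevant group, is what bounds the conjugacy classes in [DGLOWY].

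A second issue is the group in which the argument takes place. The pullback $\sigma^{\ast}$ of an antiholomorphic involution preserves $\Amp(V)$ but does not lie in $\Aut^{\ast}(V)$, so the Siegel property of a fundamental domain for $\Aut^{\ast}(V)$ does not directly apply to your $\tau$; one must work with the Klein automorphism group generated by holomorphic and antiholomorphic automorphisms, note that $\Aut(V)$ has index at most two in it, transfer the cone hypothesis to that larger group, and only then run the fixed-point/Siegel argument. Finally, the "finite fibers of $\Phi$" reduction (Borel--Serre for the algebraic part, twisting in nonabelian cohomology, Fujiki--Liebermann for $\Ker\rho/\Aut^0(V)$) names the right ingredients but is itself the content of a nontrivial criterion from the earlier real-forms literature rather than a routine verification; as written it remains a sketch.
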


\bibliographystyle{abbrv}
\bibliography{main}

\end{document}